\numberwithin{equation}{section}
\theoremstyle{plain}
\newtheorem{theorem}{Theorem}[section]
\newtheorem{lemma}[theorem]{Lemma}
\newtheorem{corollary}[theorem]{Corollary}
\theoremstyle{definition}
\theoremstyle{remark}
\newtheorem{remark}[theorem]{Remark}
\newtheorem{example}[theorem]{Example}
\newcounter{counter_a}
\newcounter{counter_b}
\newcommand{\cL}{\mathcal{L}}
\newcommand{\Dom}{{\rm Dom}}
\newcommand{\Spec}{{\rm Spec}}
\newcommand{\Span}{{\rm span}}
\newcommand{\Ker}{{\rm Ker}}
\renewcommand{\Re}{{\rm Re}\;}
\renewcommand{\Im}{{\rm Im}\;}
\newcommand{\efrac}[2]{\genfrac{}{}{0ex}{}{#1}{#2}}
\newcommand{\dist}{{\rm dist}}
\DeclareMathOperator\spn{span}
\newcommand{\ess}{\mathrm{ess}}
\newcommand{\rank}{\mathrm{Rank}}
\newcommand{\range}{\mathrm{Range}}
\newcommand{\dis}{\mathrm{dis}}
\newcommand\void[1]{}
\begin{document}
\title[The Galerkin Method for Perturbed Self-Adjoint Operators]{The Galerkin Method for Perturbed Self-Adjoint Operators and Applications}
\author[M. Strauss]{Michael Strauss$^{1}$}
\begin{abstract}
We consider the Galerkin method for approximating the spectrum of
an operator $T+A$ where $T$ is semi-bounded self-adjoint and
$A$ satisfies a relative compactness condition. We show that the method is reliable
in all regions where it is reliable for the unperturbed problem - which always contains $\mathbb{C}\backslash\mathbb{R}$. The results
lead to a new technique for identifying eigenvalues of $T$, and for identifying spectral pollution which arises from applying the Galerkin
method directly to $T$. The new technique benefits from being applicable on the form domain.\\\\
\emph{Keywords} Eigenvalue problem, spectral pollution, Galerkin method, finite-section method.\\\\
\emph{2010 Mathematics Subject Classification} 47A55, 47A58.\\\\
$^1$WIMCS-Leverhulme Fellow, School of Mathematics, Cardiff University,
Senghennydd Road, CARDIFF CF24 4AG, Wales, UK. straussmd@cardiff.ac.uk.
\end{abstract}

\date{\today}
\maketitle

\section{Introduction}

In general, the approximation of the spectrum of a semi-bounded self-adjoint a operator $T$ with the Galerkin (finite section) method is reliable only for those eigenvalues lying below the essential spectrum. Any element from the closed convex hull of  $\sigma_{\ess}(T)\cup\{\infty\}$ can in principle be the limit point of a sequence
of Galerkin eigenvalues; see \cite[Theorem 2.1]{lesh}. This phenomenon is called spectral pollution and constitutes a serious problem
in computational spectral theory; see for example \cite{boff,boff2,dasu,rapp}. As is well-known, under fairly mild assumptions on a sequence of trial spaces the
Galerkin method will capture the whole spectrum, although eigenvalues may be obscured by spectral pollution. In the presence
of essential spectrum, the Galerkin method applied to non-self-adjoint operators is less well understood. Reliability of the Galerkin method is assured in some situations, notably for compact operators or operators with compact resolvent and satisfying ellipticity conditions; see for example \cite{chat} and reference therein.

The closed sesquilinear form associated to $T$ we denote by $\frak{t}$. Let $A$ be a closed, $\vert T\vert^{\frac{1}{2}}$-compact operator,
such that $(T+A)^* = T+A^*$ and for some $0\le\alpha <1$, $\beta\ge 0$ satisfies
\begin{equation}\label{condition}
\vert\langle A\phi,\phi\rangle\vert\le\alpha\frak{t}[\phi] + \beta\Vert\phi\Vert^2\quad\textrm{for all}\quad\phi\in\Dom(\frak{t})=\Dom(\vert T\vert^{\frac{1}{2}}).
\end{equation}
We note that $\sigma_{\ess}(T+A)=\sigma_{\ess}(T)$; see for example \cite[Theorem IV.5.35]{katopert}.
In Section 2 we shall be concerned with the
approximation of the discrete spectrum (isolated eigenvalues of finite multiplicity) of $T+A$
by means of the Galerkin method. In Theorem \ref{resolvent_bound} we show that if a compact set $\Gamma\subset\rho(T+A)$ does not contain a
limit point of Galerkin eigenvalues for the self-adjoint operator $T$ and sequence of trial spaces $(\cL_n)$,
then $\Gamma$ will not contain a limit point of Galerkin eigenvalues for the operator $T+A$. Further, if in a region $U\subset\mathbb{C}$ with
$U\cap\sigma_{\ess}(T)=\varnothing$, the self-adjoint operator $T$ does not suffer from spectral pollution for a sequence of trial spaces $(\cL_n)$,
then Theorem \ref{spectralinc} states that $\sigma(T+A)\cap U$ will be approximated and without spectral pollution, and by Theorem \ref{multthm}
the Galerkin method will also capture the multiplicity of eigenvalues in the region.

In Section 3 we employ the preceding results to aid the development of a new technique for approximating those eigenvalues of $T$ which lie within the closed convex hull of $\sigma_{\ess}(T)\cup\{\infty\}$ - the region where a direct application of the Galerkin method is unreliable.
This is an issue which has received considerable attention in recent decades. There are two approaches to the problem. Firstly, general methods which
may be applied to an arbitrary self-adjoint operator, and secondly, methods designed for a specific class of operator. Examples of the former are proposed in
\cite{DP,lesh,shar,zim} and can be highly effective. However, application of these techniques can require \emph{a priori} information about the spectrum, and always require trial subspaces to belong to the operator domain rather than the preferred form domain. The latter is due to requiring matrices with entries of the form $\langle T\phi,T\psi\rangle$, while the Galerkin method requires only matrices with entries of the form $\frak{t}(\phi,\psi)$. A new method designed for a specific class of operator is studied in \cite{mar1,mar2} and is applicable to operators of the form $T=-\Delta + q$ which act on $L^2(\mathbb{R}^n)$ and $L^2(0,\infty)$, respectively. The idea is to apply the Galerkin method to
the perturbed operator $-\Delta + q + is$ for a suitably chosen function $s$. The result of the perturbation is to lift eigenvalues of $T$ off the
real line where they can be approximated without encountering spectral pollution. Based on this idea and using the form domain, we consider the Galerkin method applied to $T+iQ$
where $Q$ is an orthogonal projection. For a set $J\subset\mathbb{R}$ and a trial space $\cL$ we choose $Q$ to be the orthogonal projection onto the eigenspace
associated to Galerkin eigenvalues contained in $J$, we then apply the Galerkin method to $T+iQ$ and a \emph{larger} trial space. In Theorem \ref{QQ} we show that
if $\lambda\in\sigma_{\dis}(T)\cap J$ and our trial space $\cL$ approximates the corresponding eigenspace sufficiently well, then $T+iQ$ will have eigenvalues in
a neighbourhood of $\lambda+i$ with total-multiplicity equally that of $\lambda\in\sigma_{\dis}(T)$. Now applying results from Section 2, we may approximate the
non-real eigenvalues of $T+iQ$, their multiplicities, and free from spectral pollution. The technique is effectively applied to examples where eigenvalues are
obscured by spectral pollution.

\section{The Truncated Eigenvalue Problem}
We set $m=\min\sigma(T)$ and denote by $\mathcal{H}_{\frak{t}}$ the Hilbert space $\Dom(\frak{t})$ equipped
with the inner product
\begin{displaymath}
\langle\phi,\psi\rangle_{\frak{t}} := (\frak{t}-m)(\phi,\psi) + \langle\phi,\psi\rangle\quad\textrm{and norm}\quad
\Vert\phi\Vert_{\frak{t}}^2 := (\frak{t}-m)[\phi] + \Vert\phi\Vert^2.
\end{displaymath}
Throughout, $(\cL_n)\subset\Dom(\frak{t})$ is a sequence of finite-dimensional subspaces.
The orthogonal projections from $\mathcal{H}$ and $\mathcal{H}_{\frak{t}}$ onto $\cL_n$ will be denoted by $P_n$ and $\hat{P}_n$, respectively. We always assume that the sequence $(\cL_n)$ is dense in $\mathcal{H}_{\frak{t}}$:
\begin{displaymath}
\forall\phi\in\Dom(\frak{t})\quad\exists\phi_n\in\cL_n:\quad\Vert\phi-\phi_n\Vert_{\frak{t}}\to0.
\end{displaymath}
We define the form
\begin{displaymath}
\frak{s}(\phi,\psi) := \frak{t}(\phi,\psi) + \langle A\phi,\psi\rangle\quad\textrm{with}\quad\Dom(\frak{s})=\Dom(\frak{t}),
\end{displaymath}
the sets
\begin{align*}
\sigma(T+A,\cL_n)&:=\{z\in\mathbb{C}:~\exists\phi\in\cL_n\quad\textrm{with}\quad\frak{s}(\phi,\psi) = z\langle\phi,\psi\rangle~\forall\psi\in\cL_n\}\label{fs}\\
\sigma(T,\cL_n)&:=\{z\in\mathbb{C}:~\exists\phi\in\cL_n\quad\textrm{with}\quad\frak{t}(\phi,\psi) = z\langle\phi,\psi\rangle~\forall\psi\in\cL_n\},
\end{align*}
and the limit sets
\begin{align*}
\sigma(T+A,\cL_\infty)&:= \{z\in\mathbb{C}:~\exists~z_{n}\in\sigma(T+A,\cL_{n})\textrm{ with }z_{n}\to z\}\\
\sigma(T,\cL_\infty)&:= \{z\in\mathbb{C}:~\exists~z_{n}\in\sigma(T,\cL_{n})\textrm{ with }z_{n}\to z\}.
\end{align*}
Associated to the restriction of $\frak{s}$ and $\frak{t}$ to $\cL_n$ are operators
$S_n$ and $T_n$ which act on the Hilbert space $\cL_n$,
and satisfy
\[
\langle S_n\phi,\psi\rangle = \frak{s}(\phi,\psi)\quad\textrm{and}\quad\langle T_n\phi,\psi\rangle = \frak{t}(\phi,\psi).
\]
Evidently, we have $\sigma(T+A,\cL_n) = \sigma(S_n)$ and $\sigma(T,\cL_n)=\sigma(T_n)$.

\void{\subsection{Matrix Representations}

Let the vectors $\{\phi_1,\dots,\phi_m\}$ form a basis for the subspace $\cL_n$ for some fixed $n\in\mathbb{N}$. Consider the
matrices
\begin{displaymath}
[\mathcal{S}_n]_{i,j} = \frak{s}(\phi_j,\phi_i),\quad[\mathcal{T}_n]_{i,j} = \frak{t}(\phi_j,\phi_i),\quad\textrm{and}\quad[\mathcal{M}_n]_{i,j} = \langle\phi_j,\phi_i\rangle.
\end{displaymath}
The solutions to \eqref{fs} are precisely the solutions to the matrix eigenvalue problems
\begin{displaymath}
\mathcal{S}_n\underline{u} =  z\mathcal{M}_n\underline{u}\quad\textrm{and}\quad \mathcal{M}_n^{-\frac{1}{2}}\mathcal{S}_n\mathcal{M}_n^{-\frac{1}{2}}\underline{u} =  z\underline{u}.
\end{displaymath}}

\subsection{Regular Sets}

We say that $\Gamma\subset\mathbb{C}$ is a $T_n$-regular set if there exist a $\delta>0$ and $N\in\mathbb{N}$, with
\begin{equation}\label{reg}
\min_{\efrac{\phi\in\cL_n}{\Vert\phi\Vert=1}}\max_{\efrac{\psi\in\cL_n}{\Vert\psi\Vert=1}}
\vert(\frak{t}-z)(\phi,\psi)\vert
\ge\delta\quad\textrm{for all}\quad z\in\Gamma,\quad\textrm{and}\quad n\ge N,
\end{equation}
or equivalently
\begin{equation}\label{reg1}
\Vert(T_n  - z)\phi\Vert\ge\delta\Vert\phi\Vert\quad\textrm{for all}\quad z\in\Gamma,\quad\phi\in\mathcal{L}_n\quad\textrm{and}\quad n\ge N.
\end{equation}
Similarly, we define $S_n$-regular sets, and we shall make use of the function
\begin{align*}
\sigma_n(z)&:=\min_{\efrac{\phi\in\cL_n}{\Vert\phi\Vert=1}}\max_{\efrac{\psi\in\cL_n}{\Vert\psi\Vert=1}}
\vert(\frak{s}-z)(\phi,\psi)\vert\\
&=\min_{\efrac{\phi\in\cL_n}{\Vert\phi\Vert=1}}\Vert(S_n - z)\phi\Vert\\
&= \begin{cases}
  \Vert(S_n - z)^{-1}\Vert^{-1} & \text{if } z\in\rho(S_n), \\[1ex]
  0 &  \text{if } z\in\sigma(S_n).
  \end{cases}
\end{align*}

\begin{lemma}\label{resreg}
If $\Gamma$ is a $T_n$-regular set, then $\Gamma\subset\rho(T)$.
\end{lemma}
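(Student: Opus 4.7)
The plan is to reduce everything to surjectivity of $T-z$ for each $z\in\Gamma$. When $\Im z\neq 0$, $z\in\rho(T)$ is automatic from self-adjointness, so the only interesting case is $z\in\Gamma\cap\mathbb{R}$. In that case, surjectivity $\Ran(T-z)=\mathcal{H}$ together with self-adjointness forces $\Ker(T-z)=\Ran(T-z)^\perp=\{0\}$, and the closed graph theorem then delivers $z\in\rho(T)$. So for an arbitrary $f\in\mathcal{H}$ I need to construct $u\in\Dom(T)$ satisfying $(T-z)u=f$.

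Given such $f$, the $T_n$-regularity condition written as \eqref{reg1} yields $z\in\rho(T_n)$ with $\|(T_n-z)^{-1}\|\le\delta^{-1}$ for every $n\ge N$. I would set $u_n:=(T_n-z)^{-1}P_n f\in\cL_n$, so that $\|u_n\|\le\|f\|/\delta$ and, by the defining relation of $T_n$,
\begin{equation*}
\frak{t}(u_n,\psi)-z\langle u_n,\psi\rangle=\langle f,\psi\rangle\qquad\text{for every }\psi\in\cL_n.
\end{equation*}
Choosing $\psi=u_n$ and using that $\frak{t}[u_n]\in\mathbb{R}$ by symmetry of the form, I take real parts to obtain $\frak{t}[u_n]=(\Re z)\|u_n\|^2+\Re\langle f,u_n\rangle$, which is uniformly bounded in $n$; combined with the lower bound $\frak{t}\ge m$, this shows that $(u_n)$ is bounded in the form Hilbert space $\mathcal{H}_{\frak{t}}$.

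I would then extract a subsequence $u_{n_k}\wto u$ weakly in $\mathcal{H}_{\frak{t}}$. For any $\psi\in\Dom(\frak{t})$ the density hypothesis supplies $\psi_{n_k}:=\hat{P}_{n_k}\psi\to\psi$ strongly in $\mathcal{H}_{\frak{t}}$; since $\frak{t}-z\langle\cdot,\cdot\rangle$ is a bounded sesquilinear form on $\mathcal{H}_{\frak{t}}$, a standard weak-times-strong passage to the limit in the identity for $u_{n_k}$ tested against $\psi_{n_k}$ gives
\begin{equation*}
\frak{t}(u,\psi)-z\langle u,\psi\rangle=\langle f,\psi\rangle\qquad\text{for all }\psi\in\Dom(\frak{t}).
\end{equation*}
The second representation theorem then identifies $u\in\Dom(T)$ with $(T-z)u=f$, completing the surjectivity step.

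The main technical obstacle will be the upgrade from the $\mathcal{H}$-norm resolvent bound supplied by $T_n$-regularity to the $\mathcal{H}_{\frak{t}}$-norm bound on $(u_n)$ needed for the weak-limit step. Upgrading works here only because $\frak{t}$ is symmetric, so that the diagonal test $\psi=u_n$ delivers an explicit handle on $\frak{t}[u_n]$; a Weyl-sequence approach in the other direction would stall at the same point, because on the trial space $\|\psi\|_{\frak{t}}$ cannot be controlled by $\|\psi\|$ uniformly in $n$. I expect the analogous statement for $S_n$-regularity with the non-symmetric form $\frak{s}$ to require a more careful real-part estimate exploiting \eqref{condition}.
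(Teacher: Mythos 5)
Your proof is correct, but it takes a genuinely different route from the paper. The paper argues by contradiction with a Weyl-sequence: assuming $\lambda\in\Gamma\cap\sigma(T)$, it takes approximate eigenvectors $\psi_k$ of $T$, projects them into $\cL_n$ via $\hat{P}_n$, and shows that the quantity in \eqref{reg} becomes smaller than any fixed $\delta$, contradicting $T_n$-regularity; the only analytic input is the density of $(\cL_n)$ in $\mathcal{H}_{\frak{t}}$. You instead prove surjectivity of $T-z$ directly: the uniform discrete bound \eqref{reg1} produces Galerkin solutions $u_n=(T_n-z)^{-1}P_nf$, the diagonal test together with symmetry of $\frak{t}$ and semi-boundedness upgrades the $\mathcal{H}$-bound to an $\mathcal{H}_{\frak{t}}$-bound, and a weak-limit passage plus the representation theorem identifies a solution of $(T-z)u=f$; self-adjointness and the closed graph theorem finish. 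Your argument is slightly longer but buys more: it is constructive (the discrete solutions converge weakly to the true solution, in the spirit of finite-section convergence), whereas the paper's contradiction argument only excludes spectrum; on the other hand, the paper's proof does not need the coercivity upgrade you rely on, which, as you note, is exactly the step that would need modification for the non-symmetric form $\frak{s}$ (and indeed the paper handles the perturbed case by a different compactness argument in Lemma \ref{Alemma} and Theorem \ref{resolvent_bound} rather than by a real-part estimate). One small correction: the step identifying $u\in\Dom(T)$ with $Tu=f+zu$ from $\frak{t}(u,\psi)=\langle f+zu,\psi\rangle$ for all $\psi\in\Dom(\frak{t})$ is Kato's \emph{first} representation theorem, not the second (the second concerns $\Dom(\frak{t})=\Dom(|T|^{1/2})$ and the factorization of $\frak{t}$).
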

\begin{proof}
We suppose the contrary, so that $\Gamma$ is a $T_n$-regular set and $\lambda\in\Gamma\cap\sigma(T)$. There exists a normalised sequence $(\psi_k)\subset\Dom(T)$ such that $\Vert(T-\lambda)\psi_k\Vert< k^{-1}$. For a fixed $k$, let $\hat{\psi}_n=\hat{P}_n\psi_k$. Then for any normalised $v_n\in\cL_n$ we have
\begin{align*}
\vert\frak{t}(\hat{\psi}_n,v_n) - \lambda\langle\hat{\psi}_n,v_n\rangle\vert
&=\vert\frak{t}(\hat{\psi}_n-\psi_k,v_n) + \frak{t}(\psi_k,v_n) - \lambda\langle\hat{\psi}_n,v_n\rangle\vert\\
&=\vert\langle\hat{\psi}_n-\psi_k,v_n\rangle_{\frak{t}} + (m-1)\langle\hat{\psi}_n-\psi_k,v_n\rangle\\
&\qquad+ \langle(T - \lambda)\psi_k,v_n\rangle- \lambda\langle\hat{\psi}_n-\psi_k,v_n\rangle\vert\\
&=\vert\langle(\hat{P}_n-I)\psi_k,v_n\rangle_{\frak{t}}+ \langle(T - \lambda)\psi_k,v_n\rangle\\
&\qquad- (\lambda-m+1)\langle\hat{\psi}_n-\psi_k,v_n\rangle\vert\\
&=\vert\langle(T - \lambda)\psi_k,v_n\rangle- (\lambda-m+1)\langle\hat{\psi}_n-\psi_k,v_n\rangle\vert\\
&< k^{-1} + \vert\lambda-m+1\vert\Vert\hat{\psi}_n-\psi_k\Vert,
\end{align*}
where the right hand side is less than $k^{-1}$ for all sufficiently large $n$. Since $\Vert\hat{\psi}_n\Vert\to 1$ it follows that $\Gamma$ is not a $T_n$-regular set. The result follows from the contradiction.
\end{proof}

\begin{lemma}\label{Alemma}
Let $(\phi_n)$ be a bounded sequence of vectors with $\phi_n\in\cL_n$ and
\[\max_{\efrac{v\in\cL_n}{\Vert v\Vert=1}}\vert(\frak{s}-z)(\phi_n,v) - \langle x,v\rangle\vert\to 0\quad\textrm{as}\quad n\to\infty.\]
There exists a $u\in\mathcal{H}$ and a subsequence $n_k$, such that
$\Vert A\phi_{n_k} - u\Vert\to 0$. Moreover, if $\{z\}$ is a $T_n$-regular set, then
$\Vert\phi_{n_k}-(T-z)^{-1}(x-u)\Vert_{\frak{t}}\to 0$.
\end{lemma}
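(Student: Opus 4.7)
The plan is to leverage $|T|^{1/2}$-compactness of $A$ to extract the subsequence yielding $u$, then to use $T_n$-regularity together with $\mathcal{H}_{\frak{t}}$-approximation by $\hat P_n$ to identify the limit, and finally to upgrade convergence from $\mathcal{H}$ to $\mathcal{H}_{\frak{t}}$. Writing $\eps_n := \max_{v\in\cL_n,\|v\|=1}|(\frak{s}-z)(\phi_n,v)-\langle x,v\rangle|$, I first test the hypothesis against $v=\phi_n/\|\phi_n\|$, take real parts, apply \eqref{condition} to the $A$-term, and invoke semi-boundedness $\frak{t}[\phi]\ge m\|\phi\|^2$. This yields $(1-\alpha)(\frak{t}-m)[\phi_n]\le C(1+\|\phi_n\|^2)$, so boundedness of $(\phi_n)$ in $\mathcal{H}$ upgrades to boundedness in $\mathcal{H}_{\frak{t}}$. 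Since $|T|^{1/2}$-compactness means $A:\mathcal{H}_{\frak{t}}\to\mathcal{H}$ is compact, I can pass to a subsequence (still denoted $\phi_n$) with $A\phi_n\to u$ in $\mathcal{H}$, giving the first conclusion.

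Second, assume $\{z\}$ is $T_n$-regular. By Lemma~\ref{resreg}, $z\in\rho(T)$, so $y:=(T-z)^{-1}(x-u)\in\Dom(T)$ is defined and $(\frak{t}-z)(y,v)=\langle x-u,v\rangle$ for every $v\in\Dom(\frak{t})$. I approximate $y$ by $y_n:=\hat{P}_n y$, with $\|y_n-y\|_{\frak{t}}\to 0$ by density. The crucial step is that the $\mathcal{H}_{\frak{t}}$-orthogonality $\langle y-y_n,v\rangle_{\frak{t}}=0$ for $v\in\cL_n$ collapses the form action to
\[
(\frak{t}-z)(y-y_n,v)=(m-1-z)\langle y-y_n,v\rangle,\qquad v\in\cL_n,
\]
which is $o(1)\|v\|$ uniformly in $v$. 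Subtracting this from the hypothesis gives
\[
|(\frak{t}-z)(\phi_n-y_n,v)|\le \|u-A\phi_n\|\,\|v\|+o(1)\|v\|,\qquad v\in\cL_n,
\]
and since $\phi_n-y_n\in\cL_n$, the regularity estimate \eqref{reg1} converts this into $\delta\|\phi_n-y_n\|\to 0$, hence $\phi_n\to y$ in $\mathcal{H}$.

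To upgrade to the form norm, I test the hypothesis with $v=\phi_n$ and pass to the limit using $\phi_n\to y$ in $\mathcal{H}$ and $A\phi_n\to u$, obtaining $\lim\frak{t}[\phi_n]=z\|y\|^2+\langle x-u,y\rangle=\frak{t}[y]$. Expanding
\[
\frak{t}(\phi_n-y,\phi_n-y)=\frak{t}[\phi_n]-2\Re\langle Ty,\phi_n\rangle+\frak{t}[y]\longrightarrow 0,
\]
and using $\frak{t}-m\ge 0$ together with $\|\phi_n-y\|\to 0$ then delivers $\|\phi_n-y\|_{\frak{t}}\to 0$.

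The principal obstacle I expect is controlling $(\frak{t}-z)(y-y_n,\cdot)$ on $\cL_n$ with test vectors normalised in $\mathcal{H}$, since the form is only naturally $\mathcal{H}_{\frak{t}}$-bounded and a crude estimate would replace $\|v\|$ by $\|v\|_{\frak{t}}$, which is useless against \eqref{reg1}. The $\mathcal{H}_{\frak{t}}$-orthogonality of $y-\hat{P}_n y$ to $\cL_n$ is what rescues the argument: it kills the $\mathcal{H}_{\frak{t}}$-dependent contribution and reduces the form to an ordinary inner product with a vanishing factor, exactly matching the $\mathcal{H}$-normalisation in the regularity hypothesis.
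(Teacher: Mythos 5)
Your proposal is correct and follows essentially the same route as the paper: bound $\frak{t}[\phi_n]$ by testing with $\phi_n$ and invoking \eqref{condition} so that $|T|^{\frac{1}{2}}$-compactness of $A$ yields the subsequence and $u$, then compare $\phi_n$ with $\hat{P}_n(T-z)^{-1}(x-u)$, using the $\mathcal{H}_{\frak{t}}$-orthogonality of $(I-\hat{P}_n)(T-z)^{-1}(x-u)$ to $\cL_n$ to collapse the form defect to an $\mathcal{H}$-inner-product term and feed it into \eqref{reg1}. The only deviations are cosmetic: you argue boundedness in $\mathcal{H}_{\frak{t}}$ directly rather than by contradiction, and you upgrade to the $\Vert\cdot\Vert_{\frak{t}}$-norm via convergence of the quadratic form values $\frak{t}[\phi_{n_k}]\to\frak{t}[(T-z)^{-1}(x-u)]$ instead of showing $\frak{t}[\phi_{n_k}-\psi_{n_k}]\to0$ as the paper does; both are sound.
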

\begin{proof}
Suppose that $A\phi_n$ does not have a convergent subsequence. It follows that $\Vert\vert T\vert^{\frac{1}{2}}\phi_n\Vert\to\infty$ and therefore also that $\frak{t}[\phi_n]\to\infty$. We have
\begin{align*}
\langle x,\phi_n\rangle\approx(\frak{s}-z)(\phi_n,\phi_n) = \frak{t}[\phi_n] + \langle A\phi_n,\phi_n\rangle -z\langle\phi_n,\phi_n\rangle.
\end{align*}
Let $M\in\mathbb{R}$ be such that $\Vert\phi_n\Vert\le M$ for all $n\in\mathbb{N}$. Using \eqref{condition} and recalling that $m=\min\sigma(T)$, we obtain
\begin{align*}
\frak{t}[\phi_n] &\approx z\langle\phi_n,\phi_n\rangle - \langle A\phi_n,\phi_n\rangle + \langle x,\phi_n\rangle\\
&\le\vert z\vert M^2 + \vert \langle A\phi_n,\phi_n\rangle\vert + \Vert x\Vert M\\
&\le\vert z\vert M^2+\alpha\langle\vert T\vert\phi_n,\phi_n\rangle + \beta M^2 + \Vert x\Vert M\\
&\le\vert z\vert M^2+\alpha\frak{t}[\phi_n] + 2\alpha\vert m\vert M^2+ \beta M^2 + \Vert x\Vert M.
\end{align*}
Therefore
\begin{align*}
\frak{t}[\phi_n] \le\frac{\vert z\vert M^2+ 2\alpha\vert m\vert M^2+ \beta M^2+ \Vert x\Vert M}{1-\alpha}
\end{align*}
which is a contradiction since the left hand side converges to $\infty$. We deduce that $A\phi_{n_k}\to u$ for some $u\in\mathcal{H}$
and subsequence $n_k$.

Suppose now that $\{z\}$ is a $T_n$-regular set. Then by Lemma \ref{resreg} we have $z\in\rho(T)$, hence there exists a vector $\psi\in\Dom(T)$ with $(T-z)\psi=x-u$. Let $\psi_n = \hat{P}_n\psi$, then for any normalised $v_n\in\cL_n$ we have
\begin{align*}
\frak{t}(\psi_n,v_n) - z\langle \psi_n,v_n\rangle &= \frak{t}(\psi,v_n) - z\langle \psi,v_n\rangle + \frak{t}(\psi_n-\psi,v_n) - z\langle \psi_n-\psi,v_n\rangle\\
&= \langle(T-z)\psi,v_n\rangle+(\frak{t}-m)(\psi_n-\psi,v_n) + \langle \psi_n-\psi,v_n\rangle\\
&\qquad - (z-m+1)\langle \psi_n-\psi,v_n\rangle\\
&= \langle x-u,v_n\rangle+\langle \psi_n-\psi,v_n\rangle_\frak{t} - (z-m+1)\langle \psi_n-\psi,v_n\rangle\\
&= \langle x-u,v_n\rangle+\langle(\hat{P}_n-I)\psi,v_n\rangle_\frak{t} - (z-m+1)\langle \psi_n-\psi,v_n\rangle\\
&= \langle x-u,v_n\rangle - (z-m+1)\langle \psi_n-\psi,v_n\rangle
\end{align*}
and
\begin{align*}
\frak{t}(\phi_n,v_n) + \langle A\phi_n,v_n\rangle - z\langle \phi_n,v_n\rangle = (\frak{s}-z)(\phi_n,v_n) \approx \langle x,v_n\rangle.
\end{align*}
Hence, we have
\begin{align*}
\frak{t}(\phi_{n_k},v_{n_k}) - z\langle \phi_{n_k},v_{n_k}\rangle \approx \langle x-u,v_{n_k}\rangle
\end{align*}
and
\begin{align*}
\frak{t}(\phi_{n_k}-\psi_{n_k},v_{n_k}) - z\langle \phi_{n_k}-\psi_{n_k},v_{n_k}\rangle \to 0.
\end{align*}
Since $\{z\}$ is a $T_n$-regular set we deduce that $\phi_{n_k}-\psi_{n_k}$ and $\frak{t}(\phi_{n_k}-\psi_{n_k},v_{n_k})$ both converge to zero. In particular, we have
$\frak{t}[\phi_{n_k}-\psi_{n_k}]\to 0$, and therefore $\Vert\phi_{n_k}-\psi_{n_k}\Vert_{\frak{t}}\to 0$, hence
\begin{align*}
\Vert\phi_{n_k}-\psi\Vert_{\frak{t}} &\le \Vert\phi_{n_k}-\psi_{n_k}\Vert_{\frak{t}}  + \Vert\psi_{n_k}-\psi\Vert_{\frak{t}}\\
&= \Vert\phi_{n_k}-\psi_{n_k}\Vert_{\frak{t}}  + \Vert(\hat{P}_{n_k}-I)\psi\Vert_{\frak{t}}\\
&\to 0.
\end{align*}
\end{proof}

\begin{theorem}\label{resolvent_bound}
Let $\Gamma\subset\mathbb{C}$ be a compact $T_n$-regular set. If $\Gamma\subseteq\rho(T+A)$ then $\Gamma$ is an $S_n$-regular set.
\end{theorem}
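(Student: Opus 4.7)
The plan is to argue by contradiction, exploit the compactness of $\Gamma$ to reduce to a single limit point $z \in \Gamma$, and then invoke Lemma \ref{Alemma} to produce a nonzero eigenvector of $T+A$ at $z$.

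Suppose $\Gamma$ fails to be $S_n$-regular. Then I can extract a subsequence $n_k \to \infty$, points $z_{n_k} \in \Gamma$, and normalised vectors $\phi_{n_k} \in \cL_{n_k}$ with
\[
\max_{\efrac{v \in \cL_{n_k}}{\Vert v\Vert =1}} \vert (\frak{s}-z_{n_k})(\phi_{n_k},v)\vert \longrightarrow 0.
\]
Using compactness of $\Gamma$, pass to a further subsequence so that $z_{n_k} \to z \in \Gamma$. Adding and subtracting, the difference $(\frak{s}-z)(\phi_{n_k},v) - (\frak{s}-z_{n_k})(\phi_{n_k},v) = (z_{n_k}-z)\langle \phi_{n_k},v\rangle$ is bounded uniformly in $v$ by $|z_{n_k}-z|$, so the same maximum with $z$ in place of $z_{n_k}$ tends to zero.

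Now apply Lemma \ref{Alemma} to the bounded sequence $\phi_{n_k}$ with $x=0$. This gives, along a further subsequence, some $u \in \cH$ with $\Vert A\phi_{n_k} - u\Vert \to 0$. Since $\Gamma$ is $T_n$-regular, $\{z\}$ is a $T_n$-regular set, and by Lemma \ref{resreg} we have $z \in \rho(T)$. The second part of Lemma \ref{Alemma} then yields
\[
\Vert \phi_{n_k} - \phi \Vert_{\frak{t}} \to 0,\qquad \phi := -(T-z)^{-1}u.
\]
Convergence in $\cH_{\frak{t}}$ implies convergence in $\cH$, so $\Vert\phi\Vert = 1$.

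The final step is to identify $u$ as $A\phi$, after which $(T-z)\phi = -u = -A\phi$ gives $(T+A-z)\phi=0$ with $\phi \neq 0$, contradicting $z \in \Gamma \subseteq \rho(T+A)$. For this step, the key fact is that $A$ is $\vert T\vert^{1/2}$-compact, so $A$ extends to a bounded (indeed compact) operator from $\cH_{\frak{t}}$ to $\cH$; since $\phi_{n_k} \to \phi$ in $\cH_{\frak{t}}$, continuity of this extension gives $A\phi_{n_k} \to A\phi$ in $\cH$, and combined with $A\phi_{n_k} \to u$ this forces $u = A\phi$. I expect this last identification to be the main technical point, since it requires the right interpretation of $\vert T\vert^{1/2}$-compactness together with condition \eqref{condition} to ensure $A$ is well-defined and continuous on all of $\Dom(\frak{t}) = \cH_{\frak{t}}$; everything else is a routine extraction-of-subsequence argument combined with the machinery of the two preceding lemmas.
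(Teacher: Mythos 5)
Your proof is correct, and it follows the paper's skeleton up to the last step: argue by contradiction, use compactness of $\Gamma$ to replace $z_{n_k}$ by a single limit $z\in\Gamma$ (a replacement the paper performs silently, which you rightly make explicit via $\vert(\frak{s}-z)(\phi_{n},v)-(\frak{s}-z_{n})(\phi_{n},v)\vert\le\vert z-z_{n}\vert$), and then invoke Lemma \ref{Alemma} with $x=0$ to obtain $u$ with $A\phi_{n_k}\to u$ and $\Vert\phi_{n_k}+(T-z)^{-1}u\Vert_{\frak{t}}\to0$. Where you diverge is the endgame. You identify $u=A\phi$ for $\phi=-(T-z)^{-1}u$ and conclude $(T+A-z)\phi=0$ with $\Vert\phi\Vert=1$, contradicting injectivity of $T+A-z$; note you could get the identification even more cheaply from the assumed closedness of $A$ (since $\phi_{n_k}\to\phi$ in $\mathcal{H}$ and $A\phi_{n_k}\to u$), without discussing boundedness of $A$ from $\mathcal{H}_{\frak{t}}$ to $\mathcal{H}$, and condition \eqref{condition} plays no role in this step (it is only used inside Lemma \ref{Alemma}). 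The paper instead tests $(\frak{s}-z)(\phi_n,\hat{P}_n\psi)$ against a solution $\psi$ of the adjoint problem $(T+A^*-\overline{z})\psi=-(T-z)^{-1}u$, using the hypothesis $(T+A)^*=T+A^*$ and $\overline{z}\in\rho(T+A^*)$, and derives the contradiction $\Vert(T-z)^{-1}u\Vert^2=0$. The two routes are essentially equivalent in strength: the paper's limit $\langle A\phi_n,\psi_n\rangle\to-\langle A(T-z)^{-1}u,\psi\rangle$ implicitly uses exactly the continuity (or closedness) fact you make explicit, so your version is arguably the more transparent one, trading the adjoint-pairing machinery for a direct eigenvector construction that only needs $z$ not to be an eigenvalue of $T+A$.
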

\begin{proof}
Suppose the assertion is false. Then there exists a subsequence $n_k$ and a sequence $(z_{n_k})\subset\Gamma$, such that $\sigma_{n_k}(z_{n_k})\to 0$ as $k\to\infty$. We assume without loss
of generality that $\sigma_{n}(z_{n})\to 0$ for some sequence $(z_{n})\subset\Gamma$. Since $\Gamma$ is a compact set it follows that $(z_{n})$ has a
convergent subsequence, and without loss of generality we assume that $z_n\to z\in\Gamma$. Therefore, for a sequence of normalised vectors $\phi_n\in\cL_n$ we have
\[\max_{\efrac{v\in\cL_n}{\Vert v\Vert=1}}\vert(\frak{s}-z)(\phi_n,v)\vert\to 0\quad\textrm{as}\quad n\to\infty.\]
By Lemma \ref{Alemma} there exists a $u\in\mathcal{H}$ and subsequence $n_k$ with
\begin{displaymath}
\Vert A\phi_{n_k}-u\Vert\to0\quad\textrm{and}\quad\Vert\phi_{n_k}+(T-z)^{-1}u\Vert_{\frak{t}}\to0.
\end{displaymath}
Without loss of generality we assume that
\begin{displaymath}
\Vert A\phi_n-u\Vert\to0\quad\textrm{and}\quad\Vert\phi_n+(T-z)^{-1}u\Vert_{\frak{t}}\to0.
\end{displaymath}
We note that $u\ne0$. We have $z\in\rho(T+A)$ and therefore $\overline{z}\in\rho(T+A^*)$. Let $\psi\in\Dom(T+A^*)$ be such that $(T+A^*-\overline{z})\psi = -(T-z)^{-1}u$. Set $\psi_n=\hat{P}_n\psi$, then
$\Vert\psi_n-\psi\Vert_{\frak{t}}\to0$, hence $\frak{t}(\phi_n,\psi_n)\to-\frak{t}((T-z)^{-1}u,\psi)$ (see \cite[Theorem VI.1.12]{katopert}).
We obtain
\begin{align*}
0&\leftarrow(\frak{s}-z)(\phi_n,\psi_n)\\
&=\frak{t}(\phi_n,\psi_n) + \langle A\phi_n,\psi_n\rangle - z\langle \phi_n,\psi_n\rangle\\
&\to
-\frak{t}((T-z)^{-1}u,\psi) - \langle A(T-z)^{-1}u,\psi\rangle + z\langle(T-z)^{-1}u,\psi\rangle\\
&=-\langle(T-z)^{-1}u,T\psi\rangle - \langle(T-z)^{-1}u,A^*\psi\rangle + z\langle(T-z)^{-1}u,\psi\rangle\\
&=-\langle(T-z)^{-1}u,(T+A^*-\overline{z})\psi\rangle\\
&=\Vert(T-z)^{-1}u\Vert^2,
\end{align*}
however, the right hand side is non-zero. From the contradiction we deduce that $\Gamma$ is an $S_n$-regular set.
\end{proof}

\subsection{Uniform Sets}We say that an open set $U\subseteq\mathbb{C}$ is a $T_n$-uniform set
if:
\newcounter{counter_assump}
\begin{list}{{\rm\textbf{(\arabic{counter_assump})}}}%
{\usecounter{counter_assump}
\setlength{\itemsep}{0.5ex}\setlength{\topsep}{1.1ex}
\setlength{\leftmargin}{7ex}\setlength{\labelwidth}{7ex}}
\item any compact subset of $U\cap\rho(T)$ is $T_n$-regular
\item $U\cap\sigma(T,\cL_\infty)=U\cap\sigma(T)\subset\sigma_{\dis}(T)$
\item if $\lambda\in\sigma(T)\cap U$ and $\Gamma\subset U$ is a circle with center $\lambda$
and which neither intersects nor encloses any other element from $\sigma(T)$, then for all sufficiently large $n$ the total multiplicity
of those eigenvalues of $T_n$ enclosed by $\Gamma$ equals the multiplicity of the eigenvalue $\lambda$.\\
\end{list}
$U$ is assumed to be a $T_n$-uniform set for the remainder of this section. For a $\lambda\in\sigma(T)\cap U$ we denote the corresponding spectral subspace by $\cL(\{\lambda\})$. Let $\Gamma\subset U$ be a circle with center $\lambda$
and which neither intersects nor encloses any other element from $\sigma(T)$. We denote by $\cL_n(\Gamma)$
the spectral subspace associated to those elements from $\sigma(T,\cL_n)$ which are enclosed by $\Gamma$.

We use the following notions of the gap between subspaces $\cL$ and $\mathcal{M}$:
\[\delta(\cL,\mathcal{M}) = \sup_{\efrac{x\in\cL}{\Vert x\Vert=1}}\dist[x,\mathcal{M}]\quad\textrm{and}\quad\hat{\delta}(\cL,\mathcal{M})=\max\{\delta(\cL,\mathcal{M}),\delta(\mathcal{M},\cL)\},\]
see \cite[Section IV.2]{katopert} for further details. We shall write $\hat{\delta}_{\frak{t}}$ and $\delta_{\frak{t}}$ when the norm employed is $\Vert\cdot\Vert_{\frak{t}}$.

\begin{lemma}\label{subspacegap}
$\hat{\delta}_{\frak{t}}(\cL(\lambda),\cL_n(\Gamma))
=\mathcal{O}(\delta_{\frak{t}}(\cL(\lambda),\cL_n))$.
\end{lemma}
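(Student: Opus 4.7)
The plan is to bound one direction of the gap directly---namely $\delta_{\frak{t}}(\cL(\lambda), \cL_n(\Gamma))$---and then pass to the symmetric gap via a dimension argument. Since $\lambda \in \sigma_{\dis}(T)$, $\cL(\lambda)$ is finite dimensional, and assumption (3) gives $\dim \cL_n(\Gamma) = \dim \cL(\lambda)$ for all large $n$. For closed subspaces of equal finite dimension in a Hilbert space, the two one-sided gaps are equivalent once one of them is strictly less than $1$ (see \cite[Ch.~IV, \S2]{katopert}), so only one direction needs to be estimated.

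Set $\eta_n := \delta_{\frak{t}}(\cL(\lambda), \cL_n)$ and fix a unit vector $v \in \cL(\lambda)$ in the $\|\cdot\|_{\frak{t}}$-norm. Because $\Gamma \subset U \cap \rho(T)$, assumption (1) makes $\Gamma$ a $T_n$-regular set, so $(z - T_n)^{-1}$ exists on $\cL_n$ with a uniform bound for $z \in \Gamma$ and $n$ large. Define the Galerkin surrogate of the Riesz projection,
\[
w_n := \frac{1}{2\pi i}\oint_\Gamma (z - T_n)^{-1} P_n v \, dz \in \cL_n(\Gamma),
\]
and, using $v = \frac{1}{2\pi i}\oint_\Gamma (z-T)^{-1} v \, dz$, write
\[
v - w_n = \frac{1}{2\pi i}\oint_\Gamma \bigl[(z-T)^{-1} v - (z - T_n)^{-1} P_n v\bigr]\, dz,
\]
whose integrand is the Galerkin error $u(z) - u_n(z)$ for the resolvent equation $(\frak{t}-z)(u,\cdot) = -\langle v,\cdot\rangle$.

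Two further ingredients close the argument. Since $v$ is a $T$-eigenvector at $\lambda$, one has $u(z) = (z-\lambda)^{-1} v$, and hence $\dist_{\frak{t}}(u(z), \cL_n) \le |z-\lambda|^{-1}\eta_n$, with $|z-\lambda|$ constant on the circle $\Gamma$. Combined with a uniform Cea-type estimate
\[
\|u(z) - u_n(z)\|_{\frak{t}} \le C\,\dist_{\frak{t}}(u(z), \cL_n) \quad\text{for } z \in \Gamma,
\]
the contour representation yields $\|v - w_n\|_{\frak{t}} \le C'\eta_n$, whence $\delta_{\frak{t}}(\cL(\lambda), \cL_n(\Gamma)) = \mathcal{O}(\eta_n)$, and the desired bound on $\hat{\delta}_{\frak{t}}$ follows from the first step.

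The main technical obstacle is the uniform Cea estimate in the $\|\cdot\|_{\frak{t}}$-norm: since $\frak{t} - z$ is not coercive for $z$ near $\sigma(T)$, Lax--Milgram does not apply directly. I would instead verify the inf-sup condition of $(\frak{t}-z)$ on $\cL_n$ in the $\|\cdot\|_{\frak{t}}$-topology by hand. Diagonalising the self-adjoint $T_n$ in a $\mathcal{H}$-orthonormal eigenbasis $\{e_j\}$ with eigenvalues $\mu_j$---which is simultaneously $\frak{t}$-orthogonal with $\|e_j\|_{\frak{t}}^2 = \mu_j - m + 1$---a short calculation reduces the inf-sup constant to $\min_j |\mu_j - z|/(\mu_j - m + 1)$. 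The $T_n$-regularity bound $|\mu_j - z| \ge \delta$ controls the numerator uniformly in $z \in \Gamma$ and $n$, the denominator is uniformly bounded for $\mu_j$ in any compact set, and the ratio tends to $1$ as $\mu_j \to \infty$; this produces a uniform inf-sup constant, from which the Cea estimate in $\frak{t}$-norm follows.
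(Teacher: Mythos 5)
Your proposal is correct, but it takes a genuinely different route from the paper. The paper's proof transfers the whole problem to the bounded self-adjoint operator $(T-\tilde{m})^{-1}$, $\tilde{m}=m-1$, acting on the transformed trial spaces $\tilde{\cL}_n=(T-\tilde{m})^{\frac{1}{2}}\cL_n$: it shows $\delta_{\frak{t}}(\cL(\lambda),\cL_n)$ controls the $\mathcal{H}$-gap $\delta(\cL(\lambda),\tilde{\cL}_n)$, establishes the spectral correspondence $\mu\in\sigma(T_n)\iff(\mu-\tilde{m})^{-1}\in\sigma((T-\tilde{m})^{-1},\tilde{\cL}_n)$, invokes Chatelin's general spectral approximation results (Theorem 6.6 and Lemma 6.9) for the strongly convergent compressions $\tilde{P}_n(T-\tilde{m})^{-1}\tilde{P}_n$ to get $\hat{\delta}(\cL(\lambda),\tilde{\cL}_n(\Gamma))=\mathcal{O}(\varepsilon_n)$ in $\mathcal{H}$, and then converts back to the $\Vert\cdot\Vert_{\frak{t}}$-norm by explicit identities of the type $\Vert\phi-\psi\Vert_{\frak{t}}^2=(\lambda-\tilde{m})\Vert\phi-(T-\tilde{m})^{\frac{1}{2}}\psi/\sqrt{\lambda-\tilde{m}}\Vert^2$. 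You instead stay entirely in the form domain: you build the discrete Riesz projection $w_n=\frac{1}{2\pi i}\oint_\Gamma(z-T_n)^{-1}P_n v\,dz\in\cL_n(\Gamma)$, identify the integrand as the Galerkin approximation of $(z-T)^{-1}v=(z-\lambda)^{-1}v$, and bound the contour-integral error by a quasi-optimality estimate whose uniform inf-sup constant you compute by diagonalising $T_n$ (your value $\min_j\vert\mu_j-z\vert/(\mu_j-m+1)$ is indeed the exact inf-sup constant of $\frak{t}-z$ on $\cL_n$ in the $\Vert\cdot\Vert_{\frak{t}}$-topology, and it is bounded below uniformly in $n\ge N$ and $z\in\Gamma$ by combining the regularity bound $\vert\mu_j-z\vert\ge\delta$ on a bounded range of $\mu_j$ with the ratio tending to $1$ for large $\mu_j$, using $\mu_j\ge m$ and compactness of $\Gamma$); both arguments then finish identically, converting the one-sided bound into $\hat{\delta}_{\frak{t}}$ via the equal-dimension estimate $\delta_{\frak{t}}(\cL_n(\Gamma),\cL(\lambda))\le\delta_{\frak{t}}(\cL(\lambda),\cL_n(\Gamma))/(1-\delta_{\frak{t}}(\cL(\lambda),\cL_n(\Gamma)))$. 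What the paper's route buys is brevity by citation — the quasi-optimality and projection convergence are outsourced to Chatelin at the cost of moving between the $\mathcal{H}$ and $\mathcal{H}_{\frak{t}}$ geometries; your route is self-contained, keeps all constants explicit (the dependence on $\delta$, the radius of $\Gamma$, and the continuity constant $1+\vert m-z\vert$ of $\frak{t}-z$), and is arguably closer in spirit to finite-element error analysis, at the cost of having to verify the Galerkin orthogonality and the uniform inf-sup/C\'ea estimate by hand — which your eigenbasis computation does correctly.
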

\begin{proof}
Set $\delta_{\frak{t}}(\cL(\lambda),\cL_n)=\varepsilon_n$, $\tilde{m}=m-1$ and $\tilde{\cL}_n:=(T-\tilde{m})^{\frac{1}{2}}\cL_n$.
Then for any $\phi\in\cL({\lambda})$ with $\Vert\phi\Vert_{\frak{t}}=1$ there exists a
$\psi\in\cL_n$ such that $\Vert\phi-\psi\Vert_{\frak{t}}\le\varepsilon_n$. Noting that $\Vert\phi\Vert^2=1/(\lambda-\tilde{m})$, we obtain
\begin{align*}
\varepsilon_n^2&\ge\Vert\phi - \psi\Vert_{\frak{t}}^2\\
&=(\frak{t}-m)[\phi - \psi] + \Vert\phi - \psi\Vert^2\\
&=(\frak{t}-\tilde{m})[\phi - \psi] + (1+\tilde{m}-m)\Vert\phi - \psi\Vert^2\\
&=\Vert(T-\tilde{m})^{\frac{1}{2}}(\phi - \psi)\Vert^2\\
&=\left\Vert\frac{\phi}{\Vert\phi\Vert}- (T-\tilde{m})^{\frac{1}{2}}\psi\right\Vert^2,
\end{align*}
and therefore $\delta(\cL(\lambda),\tilde{\cL}_n)\le\varepsilon_n$.

Let $\mu\in\sigma(T_n)$ with eigenvector $\psi$, then for all $v\in\cL_n$ we have
\begin{align*}
0 &= (\frak{t}-\tilde{m})(\psi,v) - (\mu-\tilde{m})\langle\psi,v\rangle\\
&=\langle(T-\tilde{m})^{\frac{1}{2}}\psi,(T-\tilde{m})^{\frac{1}{2}}v\rangle - (\mu-\tilde{m})\langle\psi,v\rangle.
\end{align*}
Setting $\tilde{\psi} =(T-\tilde{m})^{\frac{1}{2}}\psi$ and $\tilde{v} = (T-\tilde{m})^{\frac{1}{2}}v$, the above equation may be rewritten
\[
\langle(T-\tilde{m})^{-1}\tilde{\psi},\tilde{v}\rangle =
(\mu-\tilde{m})^{-1}\langle \tilde{\psi},\tilde{v}\rangle,
\]
and therefore we have the following one-to-one correspondence
between $\sigma((T-\tilde{m})^{-1},\tilde{\cL}_n)$ and $\sigma(T_n)$:
\[
\mu\in\sigma(T_n)\quad\iff\quad\frac{1}{\mu-\tilde{m}}\in\sigma((T-\tilde{m})^{-1},\tilde{\cL}_n).
\]
Let $(\tilde{P}_n)$ be the orthogonal projections from $\mathcal{H}$ onto $\tilde{\cL}_n$. Since the sequence
$(\cL_n)$ is dense in $\mathcal{H}_{\frak{t}}$ it follows that the sequence
$(\tilde{\cL}_n)$ is dense in $\mathcal{H}$, i.e. that $\tilde{P}_n\stackrel{s}{\longrightarrow} I$. Hence, $\tilde{P}_n(T-\tilde{m})^{-1}\tilde{P}_n\stackrel{s}{\longrightarrow}(T-\tilde{m})^{-1}$, $(\lambda-\tilde{m})^{-1}$ is isolated in $\sigma((T-\tilde{m})^{-1},\tilde{\cL}_\infty)$, and for all sufficiently large $n$ the total multiplicity of those elements from $\sigma((T-\tilde{m})^{-1},\tilde{\cL}_n)$ in a neighbourhood of $(\lambda-\tilde{m})^{-1}$ equals $\dim\cL(\lambda)$.
We denote by $\tilde{\cL}_n(\Gamma)$ the spectral subspace corresponding to those eigenvalues from $\sigma((T-\tilde{m})^{-1},\tilde{\cL}_n)$ which are in a neighbourhood of $(\lambda-\tilde{m})^{-1}$. Then combining the estimate from the previous paragraph with \cite[Theorem 6.6 and Lemma 6.9]{chat} we obtain
\begin{equation}\label{rescon}
\hat{\delta}(\cL(\lambda),\tilde{\cL}_n(\Gamma))=\mathcal{O}(\varepsilon_n).
\end{equation}

Let $M\ge0$ be such that $\hat{\delta}(\cL(\lambda),\tilde{\cL}_n(\Gamma))\le M\varepsilon_n$ for all $n\in\mathbb{N}$.
It follows that for some $\psi\in\cL_n(\Gamma)$ we have
\begin{displaymath}
\left\Vert\phi-\frac{(T-\tilde{m})^{\frac{1}{2}}\psi}{\sqrt{\lambda-\tilde{m}}}\right\Vert\le \frac{M\varepsilon_n}{\sqrt{\lambda-\tilde{m}}}=\Vert\phi\Vert M\varepsilon_n.
\end{displaymath}
Then
\begin{align*}
\Vert\phi-\psi\Vert^2_{\frak{t}} &= (\frak{t}-m)[\phi-\psi] + \Vert\phi-\psi\Vert^2\\
&=\Vert(T-\tilde{m})^{\frac{1}{2}}(\phi-\psi)\Vert^2 + (\tilde{m}-m+1)\Vert\phi-\psi\Vert^2\\
&=(\lambda-\tilde{m})\left\Vert\phi - \frac{(T-\tilde{m})^{\frac{1}{2}}\psi}{\sqrt{\lambda-\tilde{m}}}\right\Vert^2\\
&\le M^2\varepsilon_n^2
\end{align*}
and therefore $\delta_{\frak{t}}(\cL(\lambda),\cL_n(\Gamma))=\mathcal{O}(\varepsilon_n)$. Since $\dim\cL_n(\Gamma)=\dim\cL(\lambda)<\infty$ for all sufficiently large $n$, we have
the estimate
 \[
 \delta_{\frak{t}}(\cL_n(\Gamma),\cL(\lambda))\le\frac{\delta_{\frak{t}}(\cL(\lambda),\cL_n(\Gamma))}{1-\delta_{\frak{t}}(\cL(\lambda),\cL_n(\Gamma))};
 \]
 see \cite[Lemma 213]{kato}. We deduce that $\tilde{\delta}_{\frak{t}}(\cL(\lambda),\cL_n(\Gamma))=\mathcal{O}(\varepsilon_n)$.
\end{proof}

\begin{theorem}\label{spectralinc}
If $U$ is a $T_n$-uniform set, then $\sigma(T+A,\cL_\infty)\cap U = \sigma(T+A)\cap U$.
\end{theorem}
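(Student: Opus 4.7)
The plan is to prove both inclusions. As setup, note that $\sigma_{\ess}(T+A)=\sigma_{\ess}(T)$ and $U\cap\sigma(T)\subset\sigma_{\dis}(T)$ by condition (2), so every point of $U\cap\sigma(T+A)$ is an isolated eigenvalue of finite multiplicity. For any $z\in U$ we can choose $r>0$ small enough that $\overline{B(z,r)}\subset U$ and $\partial B(z,r)\subset\rho(T)$ (using the isolation of any $T$-eigenvalue in $U$), and if additionally $z\in\rho(T+A)$ we can arrange $\overline{B(z,r)}\cap\sigma(T+A)=\varnothing$. By condition (1) such a circle $\Gamma=\partial B(z,r)$ is $T_n$-regular, so by Theorem \ref{resolvent_bound} it is also $S_n$-regular.

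\textbf{The inclusion $\sigma(T+A)\cap U\subseteq\sigma(T+A,\cL_\infty)\cap U$.} Fix a normalised eigenvector $\phi$ of $T+A$ at $z\in\sigma(T+A)\cap U$, and set $\phi_n=\hat{P}_n\phi$. A short computation exploiting $\langle(I-\hat{P}_n)\phi,v\rangle_{\frak{t}}=0$ for $v\in\cL_n$, together with the $\vert T\vert^{\frac{1}{2}}$-compactness of $A$, yields $\Vert(S_n-z)\phi_n\Vert\to 0$ while $\Vert\phi_n\Vert\to 1$, so $\sigma_n(z)\to 0$. With $r$ chosen so that $\overline{B(z,r)}\cap\sigma(T+A)=\{z\}$, argue by contradiction: if $B(z,r)\cap\sigma(S_n)=\varnothing$ for all large $n$, then $(w-S_n)^{-1}$ is analytic on $\overline{B(z,r)}$, and the vector-valued maximum modulus principle combined with the uniform bound $\Vert(w-S_n)^{-1}\Vert\le\delta^{-1}$ on $\Gamma$ forces $\Vert(S_n-z)^{-1}\Vert\le\delta^{-1}$, contradicting $\sigma_n(z)\to 0$. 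Hence $B(z,r)\cap\sigma(S_n)\ne\varnothing$ infinitely often for every $r>0$, and a diagonal argument produces $z_n\in\sigma(S_n)$ with $z_n\to z$.

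\textbf{The reverse inclusion (main thrust).} Suppose $z_n\in\sigma(S_n)$ with $z_n\to z\in U$ and normalised eigenvectors $\phi_n$. Since $(\frak{s}-z)(\phi_n,v)=(z_n-z)\langle\phi_n,v\rangle$ fits the hypothesis of Lemma \ref{Alemma} with $x=0$, there exist a subsequence $n_k$ and $u\in\mathcal{H}$ with $A\phi_{n_k}\to u$; the argument inside the proof of Lemma \ref{Alemma} also shows $(\phi_n)$ is bounded in $\mathcal{H}_{\frak{t}}$. After a further extraction $\phi_{n_k}\wto\phi$ in $\mathcal{H}_{\frak{t}}$, and $\vert T\vert^{\frac{1}{2}}$-compactness of $A$ upgrades this to $A\phi_{n_k}\to A\phi$ strongly, so $u=A\phi$. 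Approximating an arbitrary $v\in\Dom(\frak{t})$ by $v_k\in\cL_{n_k}$ with $v_k\to v$ in $\mathcal{H}_{\frak{t}}$ and passing to the limit in $\frak{s}(\phi_{n_k},v_k)=z_{n_k}\langle\phi_{n_k},v_k\rangle$ gives $\frak{s}(\phi,v)=z\langle\phi,v\rangle$ for all $v\in\Dom(\frak{t})$; thus $\phi$ is a form-eigenvector of $T+A$ at $z$, and the inclusion follows provided $\phi\ne 0$.

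\textbf{The main obstacle: excluding $\phi=0$.} If $z\notin\sigma(T)$, condition (1) makes $\{z\}$ a $T_n$-regular set, and the second conclusion of Lemma \ref{Alemma} gives $\phi_{n_k}\to-(T-z)^{-1}u$ in $\mathcal{H}_{\frak{t}}$, forcing $\Vert\phi\Vert=1$. The delicate case is $z\in\sigma(T)\cap U$, where $\{z\}$ is not $T_n$-regular. Assuming $\phi=0$, we have $A\phi_{n_k}\to 0$, and $\langle(T_{n_k}-z_{n_k})\phi_{n_k},v\rangle=-\langle A\phi_{n_k},v\rangle$ for $v\in\cL_{n_k}$ gives $\Vert(T_{n_k}-z)\phi_{n_k}\Vert\to 0$. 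Choose a small circle $\Gamma$ around $z$ enclosing no other point of $\sigma(T)$; by conditions (2), (3) and the self-adjointness of $T_{n_k}$, the spectral projection $E^T_{n_k}(\Gamma)$ has fixed rank $d=\dim\cL(\{z\})$, all enclosed eigenvalues converge to $z$, and $\Vert(I-E^T_{n_k}(\Gamma))\phi_{n_k}\Vert\to 0$. Thus $E^T_{n_k}(\Gamma)\phi_{n_k}\in\cL_{n_k}(\Gamma)$ has $\mathcal{H}$-norm tending to $1$ and, by Lemma \ref{subspacegap}, lies $\mathcal{H}_{\frak{t}}$-close to the finite-dimensional $\cL(\{z\})$; a final extraction yields a limit $\psi\in\cL(\{z\})$ with $\Vert\psi\Vert=1$. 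But $\phi_{n_k}\wto 0$ in $\mathcal{H}$, together with $(I-E^T_{n_k}(\Gamma))\phi_{n_k}\to 0$ strongly, forces $\psi=0$, a contradiction.
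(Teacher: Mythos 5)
Your proposal is correct, and while your approximation direction coincides with the paper's argument (your maximum-modulus step is precisely the content of the result the paper cites, \cite[Theorem 9.2.8]{EBD}: the resolvent norm of $S_n$ cannot have an interior local minimum on a disc whose boundary circle is $S_n$-regular, the regularity coming from Theorem \ref{resolvent_bound} exactly as you say), the non-pollution direction is organised genuinely differently. The paper argues by contraposition from $z\in\rho(T+A)\cap U$: for $z\in\rho(T)$ it is immediate from Theorem \ref{resolvent_bound}, and for $z\in\sigma(T)$ it uses solvability of $(T+A^*-\overline z)\psi=y$ to show $\phi_n\rightharpoonup 0$ in $\mathcal{H}$, then shows the limit $u$ of $A\phi_n$ vanishes by testing against $\Dom(T)$, and finally contradicts weak nullity via Lemma \ref{subspacegap}. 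You instead extract a weak limit $\phi$ of the Galerkin eigenvectors in $\mathcal{H}_{\frak{t}}$ (boundedness coming from the estimate inside Lemma \ref{Alemma}), use complete continuity of $A$ on $\mathcal{H}_{\frak{t}}$ to identify $u=A\phi$, pass to the limit in the forms to conclude $\frak{s}(\phi,v)=z\langle\phi,v\rangle$ for all $v\in\Dom(\frak{t})$ (hence, by the first representation theorem, $(T+A-z)\phi=0$), and then rule out $\phi=0$: trivially via Lemma \ref{Alemma} when $z\in\rho(T)$, and when $z\in\sigma(T)$ by the same underlying mechanism as the paper, namely that the approximate $T_n$-eigenvector property together with Lemma \ref{subspacegap} forces nontrivial overlap with the finite-dimensional $\ker(T-z)$, contradicting $\phi_n\rightharpoonup 0$. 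What your route buys: it avoids the adjoint-solvability step, never needs $z\in\rho(T+A)$, and actually yields the slightly stronger statement that every point of $\sigma(T+A,\cL_\infty)\cap U$ is an eigenvalue of $T+A$; what the paper's contraposition buys is that it never has to identify weak limits or make the form-to-operator passage. One small point you leave implicit: to apply Lemma \ref{subspacegap} to $E^T_{n_k}(\Gamma)\phi_{n_k}$ you need its $\Vert\cdot\Vert_{\frak{t}}$-norm to remain bounded; this is fine because the spectral subspaces of the self-adjoint $T_{n_k}$ are also $\frak{t}$-orthogonal (eigenvectors satisfy $\frak{t}(y,v)=\mu\langle y,v\rangle$ for $v\in\cL_{n_k}$), so $\Vert E^T_{n_k}(\Gamma)\phi_{n_k}\Vert_{\frak{t}}\le\Vert\phi_{n_k}\Vert_{\frak{t}}$, which is bounded — worth a sentence in a final write-up.
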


\begin{remark}
If the essential spectrum of $T$ is non-empty and $\lambda_{\textrm{e}}^-=\min\sigma_{\ess}(T)$ then
$\mathbb{C}\backslash[\lambda_{\textrm{e}}^-,\infty)$ is a $T_n$-uniform set (though not necessarily the largest). If the essential spectrum is
empty then $\mathbb{C}$ is a $T_n$-uniform set. If $T$ is bounded and $\lambda_{\textrm{e}}^+=\max\sigma_{\ess}(T)$ then
$\mathbb{C}\backslash(\lambda_{\textrm{e}}^-,\lambda_{\textrm{e}}^+)$ is a $T_n$-uniform set (though not necessarily the largest).
\end{remark}

\begin{proof}[Proof of Theorem \ref{spectralinc}]
First we show that $\sigma(T+A,\cL_\infty)\cap U\subset\sigma(T+A)$. Let $z\in\rho(T+A)\cap U$. First suppose that $z\in\rho(T)$. Then $\{z\}$ is a $T_n$-regular set and by Theorem \ref{resolvent_bound} also an $S_n$-regular set. We
deduce that $z\notin\sigma(T+A,\cL_\infty)$. Suppose now that $z\in\sigma(T)$ and that $z\in\sigma(T+A,\cL_\infty)$.
Therefore, for a sequence of normalised vectors $\phi_n\in\cL_n$ we have
\[\max_{\efrac{v\in\cL_n}{\Vert v\Vert=1}}\vert(\frak{s}-z)(\phi_n,v)\vert\to 0\quad\textrm{as}\quad n\to\infty.\]
By Lemma \ref{Alemma} the sequence $(A\phi_n)$ has a convergent subsequence. Without loss of generality we
assume that $A\phi_n\to u$, and hence for a normalised $v\in\cL_n$ we have
\begin{equation}\label{tcon}
\frak{t}(\phi_n,v) - z\langle\phi_n,v\rangle\approx-\langle u,v\rangle.
\end{equation}
For any $y\in\mathcal{H}$ there exists a $\psi\in\Dom(T+A^*)$ such that $(T+A^*-\overline{z})\psi=y$. Let $\psi_n=\hat{P}_n\psi$, then
\begin{align*}
0&\leftarrow(\frak{s}-z)(\phi_n,\psi_n)\\
&=\frak{t}(\phi_n,\psi_n) + \langle A\phi_n,\psi_n\rangle - z\langle \phi_n,\psi_n\rangle\\
&=\frak{t}(\phi_n,\psi_n-\psi) + \langle A\phi_n,\psi_n-\psi\rangle - z\langle \phi_n,\psi_n-\psi\rangle + \langle\phi_n,y\rangle\\
&= \langle\phi_n,\psi_n-\psi\rangle_{\frak{t}} + \langle A\phi_n,\psi_n-\psi\rangle - (z-m+1)\langle \phi_n,\psi_n-\psi\rangle + \langle\phi_n,y\rangle\\
&= \langle\phi_n,(\hat{P}_n-I)\psi\rangle_{\frak{t}} + \langle A\phi_n,\psi_n-\psi\rangle - (z-m+1)\langle \phi_n,\psi_n-\psi\rangle + \langle\phi_n,y\rangle\\
&= \langle A\phi_n,\psi_n-\psi\rangle - (z-m+1)\langle \phi_n,\psi_n-\psi\rangle + \langle\phi_n,y\rangle\\
&\approx \langle\phi_n,y\rangle,
\end{align*}
and therefore $\phi_n\stackrel{w}{\longrightarrow}0$. For an arbitrary $\psi\in\Dom(T)$ let $\psi_n=\hat{P}_n\psi$, then
\begin{align*}
&=\frak{t}(\phi_n,\psi) - z\langle \phi_n,\psi\rangle - \frak{t}(\phi_n,\psi_n) + z\langle \phi_n,\psi_n\rangle\\
&=(m-1-z)\langle \phi_n,\psi-\psi_n\rangle\\
&\to 0.
\end{align*}
Then using \eqref{tcon} it follows that
$-\langle u,\psi\rangle \leftarrow \frak{t}(\phi_n,\psi) - z\langle \phi_n,\psi\rangle = \langle\phi_n,(T-z)\psi\rangle\to 0$,
which implies that $u=0$. Therefore, we have
\[\max_{\efrac{v\in\cL_n}{\Vert v\Vert=1}}\vert(\frak{t}-z)(\phi_n,v)\vert\to 0\quad\textrm{as}\quad n\to\infty,\]
which together with Lemma \ref{subspacegap} implies that $\dist(\phi_n,\ker(T-z))\to 0$
which is a contradiction since $\ker(T-z)$ is finite dimensional and $\phi_n$ converges weakly to zero. We deduce that
$\sigma(T+A,\cL_\infty)\cap U\subset\sigma(T+A)$.

It remains to show that $\sigma(T+A)\cap U\subset\sigma(T+A,\cL_\infty)$.
Let $z\in\sigma(T+A)\cap U$. Then $z\in\sigma_{\dis}(T+A)$ and we choose a circle $\Gamma$ contained
in $(\rho(T+A)\cup\rho(T))\cap U$ with center $z$ and which encloses no other element from $\sigma(T+A)$. By Theorem \ref{resolvent_bound}, $\Gamma$ is an $S_n$-regular set.
Let $(T+A-z)\phi=0$ and $\phi_n=\hat{P}_n\phi$, then $\Vert \phi-\phi_n\Vert_{\frak{t}}\to0$ and $\frak{t}[\phi_n]\to\frak{t}[\phi]$. It follows that $(\vert T\vert^{\frac{1}{2}}\phi_n)$ is a bounded sequence and therefore that $A\phi_{n_j}\to A\phi$ for some subsequence $n_j$. We show that in fact $A\phi_n\to A\phi$.
Suppose the contrary, then there exists a subsequence $n_k$ and a $\delta>0$ such that $\Vert A\phi_{n_k}-A\phi\Vert\ge\delta$ for all $k\in\mathbb{N}$. However, $(\vert T\vert^{\frac{1}{2}}\phi_{n_k})$ is a bounded sequence and therefore $A\phi_{n_k}$ has a convergent subsequence which must converge to $A\phi$. From the
contradiction we deduce that $A\phi_n\to A\phi$. We have
\begin{align*}
\sigma_n(z)&\le\max_{\efrac{v\in\cL_n}{\Vert v\Vert=1}}
\vert(\frak{s}-z)(\phi_n,v)\vert\\
&=\max\vert\frak{t}(\phi_n,v) + \langle A\phi_n,v\rangle  - z\langle\phi_n,v\rangle\vert\\
&=\max\vert\frak{t}(\phi_n-\phi,v) + \langle A(\phi_n-\phi),v\rangle  - z\langle\phi_n-\phi,v\rangle\vert\\
&=\max\vert\langle(\hat{P}_n-I)\phi,v\rangle_{\frak{t}} + \langle A(\phi_n-\phi),v\rangle- (z-m+1)\langle\phi_n-\phi,v\rangle\vert\\
&=\max\vert\langle A(\phi_n-\phi),v\rangle  - (z+m-1)\langle\phi_n-\phi,v\rangle\vert\\
&\to 0.
\end{align*}
Hence, for all sufficiently large $n$, the function $\sigma_n(\cdot)$ has a local minimum inside the circle $\Gamma$ and therefore $\sigma(T+A,\cL_n)$
intersects the interior of $\Gamma$; see \cite[Theorem 9.2.8]{EBD}. The radius of $\Gamma$ may
be chosen arbitrarily small from which we deduce that $z\in\sigma(T+A,\cL_\infty)$, as required.
\end{proof}

\void{\begin{lemma}\label{kerdist}
Let $z\in\sigma(T+iQ)\backslash\mathbb{R}$ and $(S_n - z_n)\phi_n=0$ where $\Vert\phi_n\Vert=1$ and $z_n\to z$, then
\begin{displaymath}
\dist_{\frak{t}}\big(\phi_n,\Ker(T+A-z)\big)\to 0.
\end{displaymath}
\end{lemma}
\begin{proof}
Clearly we have $(S_n - z)\phi_n\to0$, then by Lemma \ref{Alemma} there exists a subsequence $n_k$ and $u\in\mathcal{H}$, such that
\begin{displaymath}
\Vert A\phi_{n_k} - u\Vert\to 0\quad\textrm{and}\quad\Vert\phi_{n_k}+(T-z)^{-1}u\Vert_{\frak{t}}\to 0.
\end{displaymath}
We show that $(T+A-z)(T-z)^{-1}u=0$. Let $v\in\Dom(\frak{t})$, then
\begin{align*}
0&\leftarrow\langle(S_{n_k}-z)\phi_{n_k},v\rangle\\
&=\frak{t}(\phi_{n_k},v) + \langle A\phi_{n_k},v\rangle - z\langle\phi_{n_k},v\rangle\\
&\to-\frak{t}((T-z)^{-1}u,v) - \langle A(T-z)^{-1}u,v\rangle + z\langle(T-z)^{-1}u,v\rangle\\
&=-\langle(T+A - z)(T-z)^{-1}u,v\rangle.
\end{align*}
Suppose there exists a subsequence $n_j$ such that
\begin{displaymath}
\max\big\{\Vert\phi_{n_j} - y\Vert_{\frak{t}}:y\in\Ker(T+A-z)\big\}>\delta>0\quad\textrm{for every}\quad j\in\mathbb{N}.
\end{displaymath}
Arguing as above, there exists a subsequence $n_{j_k}$ and a $y\in\Ker(T+A-z)$ such that $\Vert\phi_{n_{j_k}} - y\Vert_{\frak{t}}\to 0$.
The result follows from the contradiction
\end{proof}}

\subsection{Multiplicity}\label{secmult}We consider an eigenvalue $z\in\sigma(T+A)\cap U$
where $U$ is a $T_n$-uniform set. We denote by $\Gamma$ a circle contained in $U$ with
center $z$ and which neither intersects nor encloses any additional element from $\sigma(T+A)\cup\sigma(T)$.
The spectral projections associated to the part of $\sigma(T+A)$ and $\sigma(T+A,\cL_n)$
enclosed by the circle $\Gamma$ are defined by
\begin{align*}
P(z)&:=-\frac{1}{2i\pi}\int_{\Gamma}(T+A-\zeta)^{-1}~d\zeta\quad\textrm{and}\\
P_n(\Gamma)&:=-\frac{1}{2i\pi}\int_{\Gamma}(S_n-\zeta)^{-1}~d\zeta,
\end{align*}
respectively. We denote by $\mathcal{M}(z)$, $\mathcal{K}(z)$, and $\mathcal{M}_n(\Gamma)$ the range of $P(z)$, $I-P(z)$, and $P_n(\Gamma)$, respectively. We denote by $\cL_n(\Gamma)$ the spectral subspace associated to those elements from $\sigma(T,\cL_n)$ which are enclosed by $\Gamma$.

We introduce the operator $\mathcal{T}$ with domain $\Dom(\mathcal{T})=\Dom(T)\ominus\ker(T-z)$ and action $\mathcal{T}\phi = T\phi$. Evidently, $\mathcal{T}$ is
a self-adjoint operator on the Hilbert space $\mathcal{H}\ominus\ker(T-z)$ and we have $z\in\rho(\mathcal{T})$. We do not assume that $z\in\sigma(T)$, so that in the case
where $z\in\rho(T)$ we have $\mathcal{T}=T$ and $\mathcal{H}\ominus\ker(T-z)=\mathcal{H}$.

\begin{lemma}\label{Blemma}
Let $(\phi_n)$ be a bounded sequence of vectors with $\phi_n\in\cL_n$ and
\[\max_{\efrac{v\in\cL_n}{\Vert v\Vert=1}}\vert(\frak{s}-z)(\phi_n,v) - \langle x,v\rangle\vert\to 0\quad\textrm{as}\quad n\to\infty.\]
There exists a $u\in\mathcal{H}$ and subsequence $n_k$, such that $\Vert A\phi_{n_k} - u\Vert\to 0$. Moreover, $x-u\perp\ker(T-z)$ and $\dist_{\frak{t}}[\phi_{n_k}-(\mathcal{T}-z)^{-1}(x-u),\ker(T-z)]\to 0$.
\end{lemma}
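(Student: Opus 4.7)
The argument extends Lemma \ref{Alemma}. \textbf{Step 1 (compactness of $A\phi_n$).} The first assertion I would prove verbatim as in Lemma \ref{Alemma}: testing the hypothesis against $\phi_n$ itself and invoking \eqref{condition} yields an a priori bound $\frak{t}[\phi_n]\le C/(1-\alpha)$, so $\phi_n$ is bounded in $\mathcal{H}_\frak{t}$. The relative compactness of $A$ then produces a subsequence $n_k$ with $A\phi_{n_k}\to u$.

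\textbf{Step 2 (orthogonality $x-u\perp\ker(T-z)$).} For $\psi\in\ker(T-z)$, I would test the hypothesis against $\psi_n=\hat{P}_n\psi$. Since $\psi\in\Dom(T)$ with $T\psi=z\psi$, the identity $\frak{t}(\phi_{n_k},\psi)=z\langle\phi_{n_k},\psi\rangle$ holds exactly, and the $\frak{t}$-self-adjointness of $\hat{P}_n$ combined with $\phi_{n_k}\in\cL_{n_k}$ annihilates $\langle\phi_{n_k},(\hat{P}_{n_k}-I)\psi\rangle_\frak{t}$. What survives reduces to
\begin{displaymath}
(\frak{s}-z)(\phi_{n_k},\psi_{n_k}) = \langle A\phi_{n_k},\psi_{n_k}\rangle + (m-1-z)\langle\phi_{n_k},\psi_{n_k}-\psi\rangle,
\end{displaymath}
whose left side tends to $\langle x,\psi\rangle$ by hypothesis and whose right side tends to $\langle u,\psi\rangle$ by Step 1, yielding $\langle x-u,\psi\rangle=0$.

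\textbf{Step 3 (Galerkin residual and spectral decomposition of $T_n$).} Because $x-u\perp\ker(T-z)$ and $z\in\rho(\mathcal{T})$, set $\eta=(\mathcal{T}-z)^{-1}(x-u)$, $\eta_n=\hat{P}_n\eta$, and $\xi_n=\phi_n-\eta_n$. Repeating the second half of Lemma \ref{Alemma} (using $(T-z)\eta=x-u$ and $\langle\phi_n,(\hat{P}_n-I)\eta\rangle_\frak{t}=0$) reduces the Galerkin residual to $\Vert(T_{n_k}-z)\xi_{n_k}\Vert\to 0$. I would then pick a circle $\Gamma\subset U$ around $z$ enclosing no other point of $\sigma(T)$ and split $\xi_n=\xi_n^{(1)}+\xi_n^{(2)}$ along the $T_n$-invariant decomposition $\cL_n(\Gamma)\oplus\cL_n(\Gamma)^{\perp}$, orthogonal in both $\mathcal{H}$ and $\mathcal{H}_\frak{t}$ because eigenspaces of the self-adjoint $T_n$ are $(T_n-m)$-invariant. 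The $T_n$-uniform hypothesis forbids $T_n$-eigenvalues outside $\Gamma$ from accumulating at $z$---otherwise the total multiplicity clustering near $z$ for large $n$ would exceed $\dim\ker(T-z)$, contradicting property (3). Hence a uniform $\delta>0$ exists with $\Vert(T_n-z)v\Vert\ge\delta\Vert v\Vert$ for $v\in\cL_n(\Gamma)^{\perp}$ and $n$ large, which forces $\Vert\xi_{n_k}^{(2)}\Vert\to 0$. The real identity $\frak{t}[\xi_{n_k}^{(2)}]=\Re\langle(T_{n_k}-z)\xi_{n_k}^{(2)},\xi_{n_k}^{(2)}\rangle+(\Re z)\Vert\xi_{n_k}^{(2)}\Vert^2$ then upgrades this to $\Vert\xi_{n_k}^{(2)}\Vert_\frak{t}\to 0$.

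\textbf{Step 4 (conclusion).} Lemma \ref{subspacegap} supplies $\delta_\frak{t}(\cL_n(\Gamma),\ker(T-z))\to 0$, so $\dist_\frak{t}(\xi_{n_k}^{(1)},\ker(T-z))\to 0$ since $\xi_n^{(1)}$ remains bounded in $\mathcal{H}_\frak{t}$. Combining with Step 3 and $\Vert\eta_{n_k}-\eta\Vert_\frak{t}\to 0$ yields $\dist_\frak{t}(\phi_{n_k}-\eta,\ker(T-z))\to 0$, as required. I expect the main obstacle to lie in Step 3: extracting from the abstract $T_n$-uniform hypothesis a \emph{uniform} spectral gap of $T_n$ between $z$ and its eigenvalues outside $\Gamma$, and simultaneously verifying the $\mathcal{H}_\frak{t}$-orthogonality of the $T_n$-spectral decomposition; without both, the residual $\Vert(T_n-z)\xi_n\Vert\to 0$ cannot be converted into $\frak{t}$-norm control on $\xi_n^{(2)}$.
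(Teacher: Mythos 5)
Your proposal is correct and follows the same skeleton as the paper's proof: compactness of $(A\phi_n)$ exactly as in Lemma \ref{Alemma}, orthogonality of $x-u$ to $\ker(T-z)$ by testing against $\hat{P}_n\psi$ for $\psi\in\ker(T-z)$, reduction of the Galerkin residual to $\Vert(T_{n_k}-z)\xi_{n_k}\Vert\to 0$ for $\xi_n=\phi_n-\hat{P}_n(\mathcal{T}-z)^{-1}(x-u)$, and finally Lemma \ref{subspacegap} to pass from $\cL_n(\Gamma)$ to $\ker(T-z)$. The one place you deviate is the upgrade from $\mathcal{H}$-distance to $\Vert\cdot\Vert_{\frak{t}}$-distance: the paper picks a nearby $x_k\in\cL_{n_k}(\Gamma)$, expands it in the Galerkin eigenvectors $y_{k,j}$ with eigenvalues $z_{k,j}\approx z$, and shows $(\frak{t}-z)[\xi_{n_k}-\psi$-part$-x_k]\to 0$ by direct computation, whereas you take the $T_n$-spectral decomposition $\xi_n=\xi_n^{(1)}+\xi_n^{(2)}$ (orthogonal in both $\mathcal{H}$ and $\mathcal{H}_{\frak{t}}$, as you correctly verify) and use $\frak{t}[\xi_n^{(2)}]=\Re\langle(T_n-z)\xi_n^{(2)},\xi_n^{(2)}\rangle+(\Re z)\Vert\xi_n^{(2)}\Vert^2$; the two computations are essentially equivalent, yours being marginally tidier since the nearby element is just the spectral projection of $\xi_n$ onto $\cL_n(\Gamma)$. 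Finally, the obstacle you flag at the end is not actually an obstacle: $\Gamma$ is by construction a circle centered at $z$, so every eigenvalue of the self-adjoint operator $T_n$ that is not enclosed by $\Gamma$ lies at distance at least the radius $r$ from $z$ (none lies on $\Gamma$ for large $n$, since $\Gamma$ is $T_n$-regular by property (1)); hence $\Vert(T_n-z)v\Vert\ge r\Vert v\Vert$ on $\cL_n(\Gamma)^{\perp}$ automatically, and no multiplicity-counting appeal to property (3) is required for the gap --- property (3) enters only through Lemma \ref{subspacegap} in your Step 4.
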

\begin{proof}
For the first statement see Lemma \ref{Alemma}. If $z\in\rho(T)$ then for the second statement see Lemma \ref{Alemma}.
It remains to consider the case where $z\in\sigma(T)$. Let $\phi\in\ker(T-z)$, then
\begin{align*}
\langle x-u,\phi\rangle \approx  \frak{t}(\phi_{n_k},\phi) - z\langle\phi_{n_k},\phi\rangle = 0,
\end{align*}
and therefore $x-u\perp\ker(T-z)$.
There exists a vector $\psi\in\Dom(T)\ominus\ker(T-z)$ with $(\mathcal{T}-z)\psi=(T-z)\psi=x-u$. Let $\psi_n = \hat{P}_n\psi$ and $v_n\in\cL_n$ with $\Vert v_n\Vert=1$, then arguing precisely as in Lemma \ref{Alemma} we have
\begin{align*}
&\frak{t}(\psi_n,v_n) - z\langle \psi_n,v_n\rangle=\langle x-u,v_n\rangle - (z-m+1)\langle \psi_n-\psi,v_n\rangle\quad\textrm{and}\\
&\frak{t}(\phi_n,v_n) - z\langle \phi_n,v_n\rangle= \langle(S_n-z)\phi_n,v_n\rangle - \langle A\phi_n,v_n\rangle \approx \langle x,v_n\rangle - \langle A\phi_n,v_n\rangle.
\end{align*}
Hence, we have $\frak{t}(\phi_{n_k}-\psi_{n_k},v_{n_k}) - z\langle \phi_{n_k}-\psi_{n_k},v_{n_k}\rangle \to 0$,
from which we deduce that $\dist[\phi_{n_k}-\psi_{n_k},\cL_{n_k}(\Gamma)]\to 0$. Let $x_k\in\cL_{n_k}(\Gamma)$ be such that
$\phi_{n_k}-\psi_{n_k}-x_k\to 0$, and note that $(\phi_{n_k}-\psi_{n_k})$ and therefore also $(x_k)$ is a bounded sequence in $\mathcal{H}$. Furthermore, if
$\phi_{n_k}-\psi_{n_k}\to 0$ then we may choose $x_k=0$ for every $k$.
If $d=\dim\ker(T-z)$, then $\dim\cL_{n_k}(\Gamma)=d$ for all sufficiently large $k$. Hence $\cL_{n_k}(\Gamma)=\spn\{y_{k,1},\dots,y_{k,d}\}$ where the
$y_{k,j}$ are orthonormal and
\[\frak{t}(y_{k,j},v)=z_{k,j}\langle y_{k,j},v\rangle\quad\forall v\in\cL_{n_k}\quad\textrm{where}\quad z_{k,j}\approx z.\]
Hence $x_k = \sum\alpha_{k,j}y_{k,j}$ and
\begin{align*}
\Vert\phi_{n_k}-\psi_{n_k}-x_k\Vert^2_{\frak{t}} &=
(\frak{t}-z)[\phi_{n_k}-\psi_{n_k}-x_k] + (z-m+1)\Vert\phi_{n_k}-\psi_{n_k}-x_k\Vert^2\\
&=(\frak{t}-z)[\phi_{n_k}-\psi_{n_k}] + \sum\overline{\alpha}_{k,j}(\frak{t}-z)(\phi_{n_k}-\psi_{n_k},y_{k,j})\\
&\quad + \sum\alpha_{k,j}(\frak{t}-z)(y_{k,j},\phi_{n_k}-\psi_{n_k})\\
&\quad + (\frak{t}-z)\sum\overline{\alpha}_{k,i}\alpha_{k,j}(y_{k,j},y_{k,i})\\
&\quad + (z-m+1)\Vert\phi_{n_k}-\psi_{n_k}-x_k\Vert^2\\
&=(\frak{t}-z)[\phi_{n_k}-\psi_{n_k}] + \sum\overline{\alpha}_{k,j}(z_{k,j}-z)(\phi_{n_k}-\psi_{n_k},y_{k,j})\\
&\quad + \sum\alpha_{k,j}(z_{k,j}-z)(y_{k,j},\phi_{n_k}-\psi_{n_k})\\
&\quad + \sum\vert\alpha_{k,j}\vert^2(z_{k,j}-z)\\
&\quad + (z-m+1)\Vert\phi_{n_k}-\psi_{n_k}-x_k\Vert^2\\
&\to 0.
\end{align*}
Using Lemma \ref{subspacegap}
it follows that there exists a sequence $y_k\in\ker(T-z)$ such that $\Vert x_k-y_k\Vert_{\frak{t}}\to 0$, and therefore
\begin{align*}
\Vert\phi_{n_k}-(\mathcal{T}-z)^{-1}(x-u)-y_k\Vert_{\frak{t}}&=\Vert\phi_{n_k}-\psi-y_k\Vert_{\frak{t}}\\
&\le \Vert\phi_{n_k}-\psi_{n_k}-x_k\Vert_{\frak{t}} + \Vert \psi_{n_k}-\psi\Vert_{\frak{t}} + \Vert x_k-y_k\Vert_{\frak{t}}\\
&\to 0.
\end{align*}
\end{proof}

\begin{lemma}\label{mult}
If $\mathcal{M}(z)\subset\cL_n$ for all $n\in\mathbb{N}$, then $\mathcal{M}_n(\Gamma)=\mathcal{M}(z)$ for all sufficiently large $n$.
\end{lemma}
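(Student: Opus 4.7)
The plan is to establish both inclusions, the first directly from invariance and the second by contradiction via Lemma \ref{Blemma}.

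\emph{The inclusion $\mathcal{M}(z)\subset\mathcal{M}_n(\Gamma)$.} For any $\phi\in\mathcal{M}(z)\subset\Dom(T+A)$, the $T+A$-invariance of $\mathcal{M}(z)$ gives $(T+A)\phi\in\mathcal{M}(z)\subset\cL_n$, so for every $\psi\in\cL_n$ one has $\frak{s}(\phi,\psi)=\langle(T+A)\phi,\psi\rangle$. This forces $S_n\phi=(T+A)\phi$, so $\mathcal{M}(z)$ is $S_n$-invariant and $S_n\vert_{\mathcal{M}(z)}=(T+A)\vert_{\mathcal{M}(z)}$ has spectrum $\{z\}$ enclosed by $\Gamma$. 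Therefore $\mathcal{M}(z)\subset\Ran P_n(\Gamma)=\mathcal{M}_n(\Gamma)$.

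\emph{The opposite inclusion.} Suppose, for contradiction, that $\dim\mathcal{M}_n(\Gamma)>d:=\dim\mathcal{M}(z)$ along a subsequence. Because $\mathcal{M}(z)$ is $S_n$-invariant in $\mathcal{M}_n(\Gamma)$, the induced map on the nontrivial finite-dimensional quotient $\mathcal{M}_n(\Gamma)/\mathcal{M}(z)$ has an eigenvalue $\mu_n$ enclosed by $\Gamma$. Lifting an eigenvector, then subtracting the $\mathcal{H}$-orthogonal projection onto $\mathcal{M}(z)$ and normalising, one produces $\phi_n\in\mathcal{M}_n(\Gamma)$ with $\Vert\phi_n\Vert=1$, $\phi_n\perp\mathcal{M}(z)$, and $\xi_n:=(S_n-\mu_n)\phi_n\in\mathcal{M}(z)$ (here $\mathcal{M}(z)\subset\cL_n$ and the $S_n$-invariance of $\mathcal{M}(z)$ are used). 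By Theorem \ref{spectralinc}, $\mu_n\to z$ after passing to a further subsequence. The $S_n$-regularity of $\Gamma$ from Theorem \ref{resolvent_bound}, combined with $S_nP_n(\Gamma)=-\frac{1}{2\pi i}\int_\Gamma\zeta(S_n-\zeta)^{-1}\,d\zeta$, yields $\Vert S_n\phi_n\Vert\le C$ uniformly, so $(\xi_n)$ is bounded in the finite-dimensional space $\mathcal{M}(z)$ and, on a further subsequence, $\xi_n\to\xi\in\mathcal{M}(z)$.

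Since $(\frak{s}-z)(\phi_n,\psi)=\langle\xi_n,\psi\rangle+(\mu_n-z)\langle\phi_n,\psi\rangle$, Lemma \ref{Blemma} applies with $x=\xi$ and produces $u\in\mathcal{H}$ with $A\phi_{n_k}\to u$, $\xi-u\perp\ker(T-z)$, and $y_k\in\ker(T-z)$ such that $\Vert\phi_{n_k}-(\mathcal{T}-z)^{-1}(\xi-u)-y_k\Vert_{\frak{t}}\to 0$. Extracting $y_k\to y_*$ from the finite-dimensional $\ker(T-z)$ gives $\phi_{n_k}\to\phi_*:=(\mathcal{T}-z)^{-1}(\xi-u)+y_*$ in $\mathcal{H}_{\frak{t}}$, and hence in $\mathcal{H}$. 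The $\vert T\vert^{\frac{1}{2}}$-compactness of $A$ yields $A\phi_*=u$, and a direct computation gives $(T+A-z)\phi_*=\xi\in\mathcal{M}(z)$. Using that $P(z)$ commutes with $T+A$ and that $z\in\rho((T+A)\vert_{\mathcal{K}(z)})$, one deduces $\phi_*\in\mathcal{M}(z)$. But $\mathcal{H}$-convergence preserves $\phi_n\perp\mathcal{M}(z)$ and $\Vert\phi_n\Vert=1$, so $\phi_*\perp\mathcal{M}(z)$ with $\Vert\phi_*\Vert=1$, contradicting $\phi_*\in\mathcal{M}(z)$.

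The main obstacle is the quotient-and-lift construction of $(\phi_n)$ carrying the three simultaneous properties needed to make Lemma \ref{Blemma} applicable, together with the uniform bound on $(\xi_n)$ obtained from the resolvent-integral formula for $S_nP_n(\Gamma)$; without these ingredients no single limit $\xi$ is available to plug into the lemma.
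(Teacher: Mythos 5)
Your proof is correct, but it takes a genuinely different route from the paper's. The paper argues in two stages: first it shows that the generalized eigenspace $\mathcal{M}_n(z)$ of $S_n$ at the eigenvalue $z$ itself equals $\mathcal{M}(z)$ for large $n$, choosing unit vectors $\phi_n\in\mathcal{K}(z)\cap\mathcal{M}_n(z)$ with $x_n=(S_n-z)\phi_n\in\mathcal{M}(z)$ and treating separately the cases $x_n\to0$ and $\Vert x_n\Vert\ge c>0$ (rescaling by $\Vert x_n\Vert$ in the latter); it then rules out Galerkin eigenvalues $z_n\to z$ with $z_n\ne z$ via the decomposition $(S_n-z_n)\phi_n=(S_n-z_n)(I-P(z))\phi_n+(S_n-z_n)P(z)\phi_n$, again with a bounded/unbounded case distinction. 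You instead run a single contradiction on the quotient $\mathcal{M}_n(\Gamma)/\mathcal{M}(z)$, and the uniform bound $\Vert S_nP_n(\Gamma)\Vert\le C$, obtained from the $S_n$-regularity of $\Gamma$ (Theorem \ref{resolvent_bound}) together with the contour representation of $S_nP_n(\Gamma)$, replaces all of the paper's normalisation tricks: it makes $\xi_n=(S_n-\mu_n)\phi_n$ a bounded sequence in the finite-dimensional space $\mathcal{M}(z)$, so a single limit $\xi$ can be fed into Lemma \ref{Blemma} irrespective of whether $\mu_n=z$ or merely $\mu_n\to z$, and the paper's two steps and four cases collapse into one argument; using the $\mathcal{H}$-orthogonal complement of $\mathcal{M}(z)$ instead of $\mathcal{K}(z)$ also makes the terminal contradiction ($\phi_*\in\mathcal{M}(z)$, $\phi_*\perp\mathcal{M}(z)$, $\Vert\phi_*\Vert=1$) cleaner. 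What the paper's version buys is that the lemma's proof does not invoke Theorem \ref{spectralinc}; your use of it to force $\mu_n\to z$ is legitimate (it is proved earlier and does not depend on Lemma \ref{mult}), and in fact the paper implicitly needs the same localisation of Galerkin eigenvalues in the interior of $\Gamma$ to pass from its two steps to the stated conclusion $\mathcal{M}_n(\Gamma)=\mathcal{M}(z)$. Two cosmetic points: the identification $A\phi_*=u$ follows from the closedness of $A$ (equivalently, its boundedness from $\mathcal{H}_{\frak{t}}$ to $\mathcal{H}$) rather than from relative compactness as such; and, exactly as the paper does, you tacitly use that the closed disc bounded by $\Gamma$ lies in $U$ when concluding that the only possible accumulation point of $(\mu_n)$ is $z$.
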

\begin{proof}
Evidently, we have $z\in\sigma(T+A,\cL_n)$ for every $n\in\mathbb{N}$. We denote by $\mathcal{M}_n(z)$ the spectral subspace associated to $S_n$ and the eigenvalue $z\in\sigma(T+A,\cL_n)$. We note that $\mathcal{M}(z)\subseteq\mathcal{M}_n(z)\subseteq\mathcal{M}_n(\Gamma)$ for all $n$.

Suppose that $\mathcal{M}(z)\subsetneqq\mathcal{M}_{n_k}(z)$ for some subsequence $n_k$, and without loss of
generality we assume that $\mathcal{M}(z)\subsetneqq\mathcal{M}_{n}(z)$ for every $n\in\mathbb{N}$.
Then we may choose a normalised
sequence $(\phi_n)$ with $\phi_n\in\mathcal{M}_n(z)$ and
\[\phi_n\in\mathcal{K}(z)\quad\textrm{and}\quad x_n:=(S_n-z)\phi_n\in\mathcal{M}(z).
\]
To see this, we note that there is at least one vector
$y\in\mathcal{K}(z)\cap\mathcal{M}_n(z)$, therefore
\begin{align*}
P(z)(S_n-z)&y  + (I-P(z))(S_n-z)y = (S_n-z)y  \in\mathcal{M}_n(z)\\
&\Rightarrow(I-P(z))(S_n-z)y \in\mathcal{M}_n(z).
\end{align*}

First consider the case where $x_{n_k}\to 0$ for some subsequence $n_k$. We assume without loss
of generality that $x_n\to 0$. Using Lemma \ref{Blemma} and the fact that $0\le\dim\ker(T-z)<\infty$, we have a $u\in\mathcal{H}$, a subsequence $n_k$, and $\phi\in\ker(T-z)$, such that
\begin{displaymath}
\Vert A\phi_{n_k} - u\Vert\to 0\quad\textrm{and}\quad\Vert\phi_{n_k}+(\mathcal{T}-z)^{-1}u - \phi\Vert_{\frak{t}}\to 0.
\end{displaymath}
We assume without loss of generality that
\begin{displaymath}
\Vert A\phi_{n} - u\Vert\to 0\quad\textrm{and}\quad\Vert\phi_{n}+(\mathcal{T}-z)^{-1}u - \phi\Vert_{\frak{t}}\to 0.
\end{displaymath}
We note that $\phi_{n}\to-(\mathcal{T}-z)^{-1}u + \phi$ implies that
\begin{displaymath}
(\mathcal{T}-z)^{-1}u - \phi\in\mathcal{K}(z)\quad\textrm{and}\quad A\phi_n\to -A(\mathcal{T}-z)^{-1}u + A\phi.
\end{displaymath}
Let $v\in\Dom(T+A^*)$ and $v_n=\hat{P}_nv$, then
\begin{align*}
0&\leftarrow\langle(S_n-z)\phi_n,v_n\rangle\\
&=\frak{t}(\phi_n,v_n) + \langle A\phi_n,v_n\rangle - z\langle \phi_n,v_n\rangle\\
&\to-\frak{t}((\mathcal{T}-z)^{-1}u - \phi,v) - \langle A(\mathcal{T}-z)^{-1}u - A\phi,v\rangle + z\langle (\mathcal{T}-z)^{-1}u - \phi,v\rangle\\
&=-\langle(\mathcal{T}-z)^{-1}u-\phi,Tv\rangle - \langle(\mathcal{T}-z)^{-1}u-\phi,A^*v\rangle + z\langle (\mathcal{T}-z)^{-1}u-\phi,v\rangle\\
&=-\langle(\mathcal{T}-z)^{-1}u-\phi,(T+A^*-\overline{z})v\rangle.
\end{align*}
It follows that $(\mathcal{T}-z)^{-1}u-\phi\in\Ker(T+A-z)\subseteq\mathcal{M}(z)$, and we obtain a contradiction since $(\mathcal{T}-z)^{-1}u-\phi\in\mathcal{K}(z)$.

We suppose now that $\Vert x_n\Vert\ge c> 0$ for all sufficiently large $n\in\mathbb{N}$. Let $\hat{\phi}_n = \phi_n/\Vert x_n\Vert$, then
since $\mathcal{M}(z)$ is finite dimensional we have for some subsequence $n_k$
\begin{displaymath}
(S_{n_k} - z)\hat{\phi}_{n_k}\to x\quad\textrm{for some normalised}\quad x\in\mathcal{M}(z).
\end{displaymath}
We assume without loss of generality that $(S_{n} - z)\hat{\phi}_{n}\to x$.
Using Lemma \ref{Blemma} as above, we may assume that
\begin{displaymath}
\Vert A\hat{\phi}_{n} - u\Vert\to 0\quad\textrm{and}\quad\Vert\hat{\phi}_{n}-(\mathcal{T}-z)^{-1}(x-u)-\phi\Vert_{\frak{t}}\to 0\quad\textrm{where}\quad\phi\in\ker(T-z).
\end{displaymath}
We note that $\hat{\phi}_{n}\to(\mathcal{T}-z)^{-1}(x-u)+\phi$ implies that
\begin{displaymath}
(\mathcal{T}-z)^{-1}(x-u)+\phi\in\mathcal{K}(z)\quad\textrm{and}\quad A\hat{\phi}_{n} \to A(\mathcal{T}-z)^{-1}(x-u) + A\phi.
\end{displaymath}
We have
\begin{displaymath}
(T+A-z)\big((\mathcal{T}-z)^{-1}(x-u)+\phi\big) = x - u + A(\mathcal{T}-z)^{-1}(x-u) + A\phi = x\in\mathcal{M}(z)
\end{displaymath}
which is a contradiction since $(\mathcal{T}-z)^{-1}(x-u)+\phi\in\mathcal{K}(z)$.

We have shown that $\mathcal{M}(z)=\mathcal{M}_{n}(z)$ for all sufficiently large $n$. It remains to show that $z$ is not the limit point
of a sequence $z_{n_k}\in\sigma(T+A,\cL_{n_k})$ where $z_{n_k}\ne z$ for each $k\in\mathbb{N}$.
Suppose the contrary and without loss of generality that $(S_n -z_n)\phi_n = 0$ for some normalised vectors $\phi_n\in\cL_n$ where $z_n\to z$, and $z_n\ne z$ for each $n\in\mathbb{N}$. Therefore $(S_n -z)\phi_n\to0$, and using Lemma \ref{Blemma} as above, we may assume that
\begin{displaymath}
\Vert A\phi_n - u\Vert\to 0\quad\textrm{and}\quad\Vert\phi_n+(\mathcal{T}-z)^{-1}u - \phi\Vert_{\frak{t}}\to 0\quad\textrm{where}\quad\phi\in\ker(T-z).
\end{displaymath}
Arguing as above, we let $v\in\Dom(T+A^*)$ and $v_n=\hat{P}_nv$, then
\begin{align*}
0\leftarrow\langle(S_n-z)\phi_n,v_n\rangle
\to-\langle(\mathcal{T}-z)^{-1}u-\phi,(T+A^*-\overline{z})v\rangle.
\end{align*}
It follows that $(\mathcal{T}-z)^{-1}u-\phi\in\Ker(T+A-z)\subseteq\mathcal{M}(z)$, and therefore that $\phi_n \approx \psi_n\in\Ker(T+A-z)$ where $\Vert\psi_n\Vert=1$. We have
\begin{equation}\label{decom}
0 = (S_n-z_n)\phi_n = (S_n-z_n)(I-P(z))\phi_n + (S_n-z_n)P(z)\phi_n,
\end{equation}
with $0\ne(I-P(z))\phi_n\to 0$ and $(S_n-z_n)P(z)\phi_n\to0$. To see the latter let $v_n\in\cL_n$ and write $P(z)\phi_n = \psi_n + \varepsilon_ny_n$
where $\Vert y_n\Vert=1$ and $\varepsilon_n\to 0$, then
\begin{align*}
\langle(S_n - z_n)P(z)\phi_n,v_n\rangle &= \frak{t}(P(z)\phi_n,v_n) + \langle AP(z)\phi_n,v_n\rangle - z_n\langle P(z)\phi_n,v_n\rangle\\
&=\langle(T + A - z_n)P(z)\phi_n,v_n\rangle\\
&=(z-z_n)\langle\psi_n,v_n\rangle + \varepsilon_n\langle(T + A - z_n)P(z)y_n,v_n\rangle
\end{align*}
that the first term on the right hand side converges to zero is clear, for the second term we note that $(T + A - z_n)P(z)$ is a bounded operator.
We denote
\begin{displaymath}
\hat{\phi}_n = \frac{(I-P(z))\phi_n}{\Vert(S_n-z_n)P(z)\phi_n\Vert}\in\mathcal{K}(z),
\end{displaymath}
then using \eqref{decom} we have for some subsequence $n_k$
\begin{displaymath}
(S_{n_k}-z_{n_k})\hat{\phi}_{n_k} = -\frac{(S_{n_k}-z_{n_k})P(z)\phi_{n_k}}{\Vert(S_{n_k}-z_{n_k})P(z)\phi_{n_k}\Vert}\to x\quad\textrm{for some normalised}\quad x\in\mathcal{M}(z).
\end{displaymath}
We assume without loss of generality that $(S_n-z_n)\hat{\phi}_n \to x\in\mathcal{M}(z)$.

First consider the case where the sequence $\Vert\hat{\phi}_n\Vert$ is bounded. Using Lemma \ref{Blemma} as above, we may assume that
\begin{displaymath}
\Vert A\hat{\phi}_n - u\Vert\to 0\quad\textrm{and}\quad\Vert\hat{\phi}_n-(\mathcal{T}-z)^{-1}(x-u)-\phi\Vert_{\frak{t}}\to 0\quad\textrm{where}\quad\phi\in\ker(T-z).
\end{displaymath}
We note that $\hat{\phi}_{n}\to(\mathcal{T}-z)^{-1}(x-u)+\phi$ implies that
\begin{displaymath}
(\mathcal{T}-z)^{-1}(x-u)+\phi\in\mathcal{K}(z)\quad\textrm{and}\quad A\hat{\phi}_n \to A(\mathcal{T}-z)^{-1}(x-u) + A\phi.
\end{displaymath}
We have
\begin{displaymath}
(T+A-z)\big((\mathcal{T}-z)^{-1}(x-u)+\phi\big) = x - u + A(\mathcal{T}-z)^{-1}(x-u) +A\phi= x\in\mathcal{M}(z)
\end{displaymath}
which is a contradiction since $(\mathcal{T}-z)^{-1}(x-u)+\phi\in\mathcal{K}(z)$.

Suppose now that the sequence $\Vert\hat{\phi}_n\Vert$ is not bounded. In view of the previous paragraph we assume that $\Vert\hat{\phi}_n\Vert\to\infty$. We set
$\tilde{\phi}_n = \hat{\phi}_n/\Vert\hat{\phi}_n\Vert$ and obtain
\begin{displaymath}
(S_n-z_n)\tilde{\phi}_n  \to0\quad\Rightarrow\quad (S_n-z)\tilde{\phi}_n\to 0.
\end{displaymath}
Then using Lemma \ref{Blemma} as above, we may assume that
\begin{displaymath}
\Vert A\tilde{\phi}_n - u\Vert\to 0\quad\textrm{and}\quad\Vert\tilde{\phi}_n+(\mathcal{T}-z)^{-1}u-\phi\Vert_{\frak{t}}\to 0\quad\textrm{where}\quad\phi\in\ker(T-z).
\end{displaymath}
We note that $\tilde{\phi}_{n}\to-(\mathcal{T}-z)^{-1}u+\phi$ implies that
\begin{displaymath}
(\mathcal{T}-z)^{-1}u-\phi\in\mathcal{K}(z)\quad\textrm{and}\quad A\tilde{\phi}_n \to -A(\mathcal{T}-z)^{-1}u+A\phi.
\end{displaymath}
Let $v\in\Dom(T+A^*)$ and $v_n=\hat{P}_nv$, then
\begin{align*}
0&\leftarrow\langle(S_n-z)\tilde{\phi}_n,v_n\rangle\\
&=\frak{t}(\tilde{\phi}_n,v_n) + \langle A\tilde{\phi}_n,v_n\rangle - z\langle \tilde{\phi}_n,v_n\rangle\\
&\to-\frak{t}((\mathcal{T}-z)^{-1}u-\phi,v) - \langle A(\mathcal{T}-z)^{-1}u-A\phi,v\rangle + z\langle (\mathcal{T}-z)^{-1}u-\phi,v\rangle\\
&=-\langle(\mathcal{T}-z)^{-1}u-\phi,Tv\rangle - \langle(\mathcal{T}-z)^{-1}u-\phi,A^*v\rangle + z\langle (\mathcal{T}-z)^{-1}u-\phi,v\rangle\\
&=-\langle(\mathcal{T}-z)^{-1}u-\phi,(T+A^*-\overline{z})v\rangle.
\end{align*}
It follows that $(\mathcal{T}-z)^{-1}u-\phi\in\Ker(T+A-z)\subseteq\mathcal{M}(z)$, and we obtain a contradiction since $(\mathcal{T}-z)^{-1}u-\phi\in\mathcal{K}(z)$.
\end{proof}

\begin{theorem}\label{multthm}
$\rank~P_n(\Gamma) = \rank~P(z)$ for all sufficiently large $n\in\mathbb{N}$.
\end{theorem}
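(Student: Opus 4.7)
The plan is to reduce the statement to Lemma \ref{mult} by enlarging the trial subspaces so as to contain the full spectral subspace $\mathcal{M}(z)$, and then transfer the rank count back to the original sequence by a perturbation argument. Concretely, I would set $\tilde{\cL}_n := \cL_n + \mathcal{M}(z)$. Because $\mathcal{M}(z) \subset \Dom(\frak{t})$ is finite-dimensional, the enlarged sequence $(\tilde{\cL}_n)$ is still dense in $\mathcal{H}_\frak{t}$, and a short check of the three defining conditions shows that $U$ remains a $T_n$-uniform set for $(\tilde{\cL}_n)$. Since $\mathcal{M}(z) \subset \tilde{\cL}_n$ for every $n$, Lemma \ref{mult} applied to the enlarged sequence yields $\tilde{\mathcal{M}}_n(\Gamma) = \mathcal{M}(z)$, and hence $\rank \tilde{P}_n(\Gamma) = \rank P(z)$, for all sufficiently large $n$.

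For the lower bound $\rank P_n(\Gamma) \geq \rank P(z)$, I would fix a basis $\phi_1, \dots, \phi_d$ of $\mathcal{M}(z)$ and show that $P_n(\Gamma) \hat{P}_n \phi_j \to \phi_j$ in $\mathcal{H}_\frak{t}$. The key inputs are Theorem \ref{resolvent_bound}, which gives a uniform bound on $\Vert (S_n - \zeta)^{-1} \Vert$ for $\zeta \in \Gamma$, and a Galerkin-type convergence argument (of the kind already used at the end of the proof of Theorem \ref{spectralinc}, invoking the relative compactness of $A$) that yields $(S_n - \zeta)^{-1} \hat{P}_n \phi_j \to (T+A-\zeta)^{-1}\phi_j$ uniformly on $\Gamma$. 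Integrating around $\Gamma$ gives $P_n(\Gamma)\hat{P}_n \phi_j \to P(z)\phi_j = \phi_j$, and the approximate linear independence of the images $P_n(\Gamma)\hat{P}_n \phi_j$ persists for large $n$ because the $\phi_j$ are linearly independent.

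For the matching upper bound $\rank P_n(\Gamma) \leq \rank P(z)$, I would compare $P_n(\Gamma)$ directly with $\tilde{P}_n(\Gamma)$. Since $\mathcal{M}(z)$ is finite-dimensional and each of its elements is well approximated in $\Vert \cdot \Vert_\frak{t}$ by $\cL_n$, one obtains $\hat{\delta}_\frak{t}(\cL_n, \tilde{\cL}_n) \to 0$. Combined with the uniform $S_n$- and $\tilde{S}_n$-regularity of $\Gamma$, this allows one to show that the finite-rank operators $P_n(\Gamma) \hat{P}_n$ and $\tilde{P}_n(\Gamma) \hat{\tilde{P}}_n$ on $\mathcal{H}$ become arbitrarily close in operator norm; two finite-rank projections close in operator norm must have equal rank, so $\rank P_n(\Gamma) = \rank \tilde{P}_n(\Gamma) = \rank P(z)$ for all sufficiently large $n$.

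The main obstacle will be making the operator-norm comparison between $P_n(\Gamma) \hat{P}_n$ and $\tilde{P}_n(\Gamma) \hat{\tilde{P}}_n$ rigorous, because these are spectral projections defined on different finite-dimensional spaces, and the gap estimate must be combined with uniform control of both resolvents and the $\vert T\vert^{1/2}$-compactness of $A$. An alternative route that avoids this comparison is to prove the upper bound directly by a compactness argument: supposing $\rank P_n(\Gamma) \geq d+1$ along a subsequence, extract $d+1$ normalized vectors from $\mathcal{M}_n(\Gamma)$ and, via iterated applications of Lemma \ref{Blemma}, show that their limits lie in $\mathcal{M}(z)$ while preserving linear independence, contradicting $\dim \mathcal{M}(z)=d$. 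The delicate point in that approach is handling the Jordan structure of $S_n$ on $\mathcal{M}_n(\Gamma)$, since Lemma \ref{Blemma} is tailored to genuine eigenvectors.
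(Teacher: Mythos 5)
Your overall strategy -- force $\mathcal{M}(z)$ into the trial spaces so that Lemma \ref{mult} applies, then transfer the rank count back to $(\cL_n)$ -- is indeed the paper's strategy, and your lower bound $\rank P_n(\Gamma)\ge\rank P(z)$ (uniform $S_n$-regularity of $\Gamma$ from Theorem \ref{resolvent_bound}, strong convergence of the Galerkin resolvents at the fixed vectors $\phi_j$ via a Lemma \ref{Alemma}-type argument, then contour integration) is sound. The genuine gap is in the upper bound, which is the crux. The claim $\hat{\delta}_{\frak{t}}(\cL_n,\tilde{\cL}_n)\to 0$ on which your main route rests is false: whenever $\mathcal{M}(z)\not\subset\cL_n$ we have $\cL_n\subsetneq\tilde{\cL}_n$, so there is a unit vector $w_n\in\tilde{\cL}_n$ that is $\frak{t}$-orthogonal to $\cL_n$ and hence $\delta_{\frak{t}}(\tilde{\cL}_n,\cL_n)=1$; the quantity that does tend to zero is only the one-sided gap $\delta_{\frak{t}}(\mathcal{M}(z),\cL_n)$. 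Such a $w_n$ satisfies $P_n(\Gamma)\hat{P}_n w_n=0$ while there is no reason for $\tilde{P}_n(\Gamma)w_n$ to be small, so the asserted operator-norm closeness of the two Riesz projections has no foundation (note also that $\hat{P}_n$ is only defined on $\Dom(\frak{t})$, so the comparison would in any case have to be made on $\mathcal{H}_{\frak{t}}$, with resolvent bounds translated from the $\mathcal{H}$-norm in which regularity is stated). The step ``idempotents with $\Vert P-Q\Vert<1$ have equal rank'' is fine; it is the closeness itself that is missing. Your fallback compactness argument stalls exactly where you say it does: elements of $\mathcal{M}_n(\Gamma)$ are generalized eigenvectors of the non-self-adjoint $S_n$, and Lemma \ref{Blemma} does not control them without the additional bookkeeping that Lemma \ref{mult} performs only in the case $\mathcal{M}(z)\subset\cL_n$. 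Finally, the ``short check'' that $U$ remains a uniform set for $(\tilde{\cL}_n)$ is not short: conditions (1)--(3) are not monotone under enlarging trial spaces (both the min and the max in \eqref{reg} change), and verifying them requires further Lemma \ref{Alemma}-type compactness arguments exploiting $\mathcal{M}(z)\subset\Dom(T)$.

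For comparison, the paper never compares spectral projections attached to spaces of different dimension. It keeps $\dim\cL_n$ fixed and deforms the space: with $\phi_{n,j}=\hat{P}_n\phi_j$ it sets $\phi_{n,j}(t)=t\phi_j+(1-t)\phi_{n,j}$ and $\cL_n(t)=\Span\{\phi_{n,1}(t),\dots,\phi_{n,d}(t),\phi_{n,d+1},\dots,\phi_{n,n}\}$, so that $\cL_n(0)=\cL_n$ and $\mathcal{M}(z)\subset\cL_n(1)$. It proves that every diagonal sequence $(\cL_{n}(t_n))$ is dense in $\mathcal{H}_{\frak{t}}$, which allows Theorem \ref{resolvent_bound} to be applied along any offending subsequence and yields $\Gamma\subset\rho(S_n(t))$ for all $t\in[0,1]$ and all large $n$; then, choosing an orthonormal basis of $\cL_n(t)$ analytic in $t$, the Riesz projection of the matrix representation is analytic in $t$, so $\rank P_n(\Gamma,t)$ is constant on $[0,1]$. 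At $t=1$ Lemma \ref{mult} gives the rank $\rank P(z)$, and at $t=0$ this is $\rank P_n(\Gamma)$. This homotopy/constant-rank argument is precisely the device that replaces your projection-comparison step, and without it (or a correct substitute for the upper bound) your proposal does not yet prove the theorem.
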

\begin{proof}
Let $\Span\{\phi_1,\dots,\phi_d\}=\mathcal{M}(z)$ where the $\phi_j$ are orthonormal in $\mathcal{H}$, and set
$\phi_{n,j} = \hat{P}_n\phi_j$. Note that there exists a sequence $\varepsilon_n\to0$ such that
\begin{equation}\label{ep}
\Vert(I-\hat{P}_n)\phi_j\Vert\le\varepsilon_n,
\quad\textrm{and}\quad\vert\langle \phi_{n,j},\phi_{n,i}\rangle\vert\le\varepsilon_n\quad\textrm{for}\quad i\ne j.
\end{equation}
Evidently, the vectors $\{\phi_{n,1},\dots,\phi_{n,d}\}$ form a linearly independent set for all sufficiently large $n\in\mathbb{N}$.
There exist orthonormal vectors $\phi_{n,d+1},\dots,\phi_{n,n}$ (we assume without loss of generality that $\dim \cL_n = n$) such that
\begin{equation}\label{perp1}
\cL_n=\Span\{\phi_{n,1},\dots,\phi_{n,d},\phi_{n,d+1},\dots,\phi_{n,n}\}
\end{equation}
and
\begin{equation}\label{perp2}
\Span\{\phi_{n,1},\dots,\phi_{n,d}\}\perp\Span\{\phi_{n,d+1},\dots,\phi_{n,n}\},
\end{equation}
where on both occasions the orthogonality is with respect to $\mathcal{H}$.
For $1\le j\le d$ we set $\phi_{n,j}(t) = t\phi_j + (1-t)\phi_{n,j}$ where $t\in\mathbb{R}$. Let $\underline{\alpha}\in\mathbb{C}^n$, then
\begin{align*}
\sum_{j=1}^d\alpha_j\phi_{n,j}(t) + \sum_{j=d+1}^n\alpha_j\phi_{n,j} &=
\sum_{j=1}^d\alpha_j(t\phi_j + (1-t)\phi_{n,j}) + \sum_{j=d+1}^n\alpha_j\phi_{n,j}\\
&=\sum_{j=1}^n\alpha_j\phi_{n,j}+\sum_{j=1}^d\alpha_j t(\phi_j -\phi_{n,j})\\
&=\sum_{j=1}^n\alpha_j\phi_{n,j} + (I-\hat{P}_n)\sum_{j=1}^d\alpha_j t\phi_j.
\end{align*}
The two summations on the right hand side are orthogonal in $\mathcal{H}_{\frak{t}}$. Hence the left hand side can only vanish if both terms on the right hand side vanish, that is, if $\underline{\alpha}=\underline{0}$. We deduce that the vectors
$\{\phi_{n,1}(t),\dots,\phi_{n,d}(t),\phi_{n,d+1},\dots,\phi_{n,m}\}$ form a linearly independent set for every $t\in\mathbb{R}$.
We define the family of $n$-dimensional subspaces
\begin{displaymath}
\cL_n(t):=\Span\{\phi_{n,1}(t),\dots,\phi_{n,d}(t),\phi_{n,d+1},\dots,\phi_{n,n}\}.
\end{displaymath}
For any $\psi\in\Dom(\frak{t})$, there exist vectors $\psi_n\in\cL_n$ such that $\Vert\psi-\psi_n\Vert_{\frak{t}}\to 0$.
Using \eqref{perp1} we have
\begin{align*}
\psi_n &= \sum_{j=1}^n\alpha_{n,j}\phi_{n,j}\quad\textrm{for some}\quad\underline{\alpha}\in\mathbb{C}^n.
\end{align*}
Note that for some $M\in\mathbb{R}$ we have $\Vert\psi_n\Vert^2\le M$ for all $n$. Using \eqref{ep}, \eqref{perp1} and \eqref{perp2} we have
\begin{align*}
M &\ge \Vert\sum_{j=1}^n\alpha_{n,j}\phi_{n,j}\Vert^2\\
&=\sum_{j=1}^n\vert\alpha_{n,j}\vert^2\Vert\phi_{n,j}\Vert^2 + \sum_{i\ne j}^d\alpha_{n,j}\overline{\alpha}_{n,i}\langle\phi_{n,j},\phi_{n,i}\rangle\\
&\ge\sum_{j=1}^n\vert\alpha_{n,j}\vert^2\Vert\phi_{n,j}\Vert^2 - \sum_{i\ne j}^d\vert\alpha_{n,j}\overline{\alpha}_{n,i}\vert\varepsilon_n\\
&\ge\sum_{j=1}^n\vert\alpha_{n,j}\vert^2\Vert\phi_{n,j}\Vert^2 - (d^2-d)\max_{1\le j\le d}\big\{\vert\alpha_{n,j}\vert^2\big\}\varepsilon_n\\
&\ge \bigg(\min_{1\le j\le n}\big\{\Vert\phi_{n,j}\Vert^2\big\}- (d^2-d)\varepsilon_n\bigg)\max_{1\le j\le d}\big\{\vert\alpha_{n,j}\vert^2\big\}.
\end{align*}
From which it follows that for some $K\in\mathbb{R}$ we have $\max_{1\le j\le d}\{\vert\alpha_{n,j}\vert\}\le K$ for all $n$.
Consider a sequence $(t_n)\subset[0,1]$ and the vectors $\psi_n(t_n)\in\cL_n(t_n)$ given by
\begin{align*}
\psi_n(t_n) &= \sum_{j=1}^d\alpha_{n,j}\phi_{n,j}(t_n) + \sum_{j=d+1}^n\alpha_{n,j}\phi_{n,j}\\
&=\sum_{j=1}^n\alpha_{n,j}\phi_{n,j} + (I-\hat{P}_n)\sum_{j=1}^d\alpha_{n,j}t_n\phi_j\\
&=\psi_n + (I-\hat{P}_n)\sum_{j=1}^d\alpha_{n,j}t_n\phi_j.
\end{align*}
We have
\[
\Vert\psi_n(t_n)-\psi_n\Vert_{\frak{t}} \le \sum_{j=1}^d t_n\vert\alpha_{n,j}\vert\Vert(I-\hat{P}_n)\phi_j\Vert_{\frak{t}}\le K\sum_{j=1}^d\Vert(I-\hat{P}_n)\phi_j\Vert_{\frak{t}}\to 0,
\]
and therefore the sequence $(\cL_n(t_n))$ is dense in $\mathcal{H}_{\frak{t}}$.

Let $S_n(t)$ be the operator acting on $\cL_n(t)$ which is associated to the restriction
of the form $\frak{s}$ to $\cL_n(t)$.
We now show that for all sufficiently large $n\in\mathbb{N}$ we have
\begin{equation}\label{con1}
\Gamma\subset\rho(S_n(t))\quad\textrm{for all}\quad t\in[0,1].
\end{equation}
We suppose that \eqref{con1} is false. Then there exist sequences $(w_{j})\subset\Gamma$ and $(t_{j})\subset[0,1]$, and a subsequence $n_j$, such that
\begin{displaymath}
\min_{\phi\in\cL_{n_j}(t_{j})\backslash\{0\}}\left\{\frac{\Vert(S_{n_j}(t_{j}) - w_{j})\phi\Vert}{\Vert\phi\Vert}\right\}= 0.
\end{displaymath}
However, the sequence of subspaces $(\cL_{n_j}(t_{j}))$ is dense in $\mathcal{H}_{\frak{t}}$, then by Theorem \ref{resolvent_bound} the set $\Gamma$ is an
$S_{n_j}(t_j)$-regular set. The assertion \eqref{con1} follows from the contradiction. Consider the projection
\begin{displaymath}
P_n(\Gamma,t):=-\frac{1}{2i\pi}\int_{\Gamma}(S_n(t)-\zeta)^{-1}~d\zeta:\cL_n(t)\to\cL_n(t)
\end{displaymath}
which is the spectral projection associated to $S_n(t)$ and the part of the spectrum enclosed by the circle $\Gamma$.
By employing the Gram-Schmidt procedure we may obtain
\begin{displaymath}
\cL_n(t)=\Span\{\hat{\phi}_{n,1}(t),\dots,\hat{\phi}_{n,k}(t),\hat{\phi}_{n,k+1}(t),\dots,\hat{\phi}_{n,n}(t)\}
\end{displaymath}
where the vectors $\{\hat{\phi}_{n,1}(t),\dots,\hat{\phi}_{n,k}(t),\hat{\phi}_{n,k+1}(t),\dots,\hat{\phi}_{n,n}(t)\}$ are analytic in $t$ and orthonormal
for each fixed $t\in[0,1]$. Let $\mathcal{S}_n(t)$ be the matrix representation
of $S_n(t)$ with respect to this orthonormal basis, then
$\mathcal{S}_n(t):\mathbb{C}^n\to\mathbb{C}^n$ clearly has the same eigenvalues as $S_n(t)$, and the eigenvalues have the same multiplicities. Evidently, the
spectral projection associated to $\mathcal{S}_n(t)$ and the part of the spectrum enclosed by the circle $\Gamma$ is analytic in $t$, and therefore
has constant rank for $t\in[0,1]$. We deduce that $\rank(P_n(\Gamma,t))$ is also is constant for $t\in[0,1]$. The result now follows from Lemma \ref{mult}.
\end{proof}

\section{Approximation of $\sigma_{\dis}(T)$}

We consider now the perturbation $T+iQ$ for an orthogonal projection $Q$.
If $(T+iQ - z)\psi=0$ for some $\psi\ne 0$ and $z\notin\mathbb{R}$, then we have
\begin{align*}
(T - \Re z)\psi = i\Im z\psi - iQ\psi,\quad\langle T\psi,\psi\rangle = \Re z\Vert\psi\Vert^2,\quad\Vert Q\psi\Vert^2=\Im z\Vert\psi\Vert^2,
\end{align*}
and hence
\begin{align*}
\Vert(T-\Re z)\psi\Vert^2 &= (\Im z)^2\Vert\psi\Vert^2 + (1-2\Im z)\Vert Q\psi\Vert^2\\
&= \Im z(1-\Im z)\Vert\psi\Vert^2
\end{align*}
from which we obtain the estimate
\begin{equation}\label{est1}
\dist(\Re z,\sigma(T))\le\sqrt{\Im z(1-\Im z)}.
\end{equation}
Therefore, information about the location of $\sigma(T)$ may be gleamed by studying the perturbation $T+iQ$ for a suitably chosen projection. In fact,
the estimate \eqref{est1} could be significantly improved if some a priori information is at hand. Let $(a,b)\cap\sigma(T)=\lambda$,
then using \cite[Lemma 1 \& 2]{kat} we obtain
\begin{equation}\label{int3}
\lambda\in\left(\Re z - \frac{\Im z(1-\Im z)}{b-\Re z},\Re z + \frac{\Im z(1-\Im z)}{\Re z - a}\right)
\end{equation}
whenever the interval on the right hand side is contained in $(a,b)$.

Let $a,b\in\rho(T)$ with $a<b$ and set $\Delta = [a,b]$. For the remainder of this section we assume that $\Delta\cap\sigma(T)=\{\lambda_1,\dots,\lambda_d\}\subset\sigma_{\dis}(T)$ where the eigenvalues are repeated according to multiplicity. The corresponding spectral projection and eigenspace are denoted by $E(\Delta)$ and $\cL(\Delta)$, respectively. Denote by $\Gamma_a$ and $\Gamma_b$, circles with radius $1$ and centers $a$ and $b$, respectively. We define
\begin{align*}U(a,b) := \Big\{z\in\mathbb{C}:&~a<\Re z<b,~z\textrm{ belongs to the exterior of circles }\Gamma_a\textrm{ and }\Gamma_b,\\&~z\ne \lambda_j+i\textrm{ for }1\le j\le d\Big\},
\end{align*}
and, for a compact set $X\subset U(a,b)$
\begin{displaymath}
d_X := \dist(X,\sigma(T)\backslash\Delta)\quad\textrm{and}\quad
d_\Delta:=\dist(\{\lambda_1+i\dots,\lambda_d+i\},X).
\end{displaymath}

\begin{lemma}\label{f}
Let $\varepsilon=\Vert (I-Q)E(\Delta)\Vert$ and $c=\min\{d_X-1-2\varepsilon,d_\Delta-3\varepsilon\}$,
then
\begin{equation}\label{resolvent_bound2}
\Vert (T+iQ -z)u\Vert\ge c\Vert u\Vert\quad\textrm{for all}\quad z\in X\quad\textrm{and}\quad u\in\Dom(T).
\end{equation}
\end{lemma}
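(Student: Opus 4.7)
The plan is to decompose $u=u_1+u_2$ with $u_1=E(\Delta)u$, $u_2=(I-E(\Delta))u$, exploit the $T$-invariance of $\cL(\Delta)$ and $\cL(\Delta)^{\perp}$, and absorb the $Q$-dependent errors using $\|(I-Q)E(\Delta)\|=\varepsilon$. Set $w=(T+iQ-z)u$. Writing $iQu=iu_1-i(I-Q)u_1+iQu_2$ yields
\[
w = (T-z+i)u_1 + (T-z)u_2 - i(I-Q)u_1 + iQu_2.
\]
The first two terms lie in $\cL(\Delta)$ and $\cL(\Delta)^{\perp}$ respectively; since $\sigma(T|_{\cL(\Delta)})=\{\lambda_1,\dots,\lambda_d\}$ and $\sigma(T|_{\cL(\Delta)^{\perp}})=\sigma(T)\setminus\Delta$, for every $z\in X$ they satisfy the spectral lower bounds $\|(T-z+i)u_1\|\geq d_\Delta\|u_1\|$ and $\|(T-z)u_2\|\geq d_X\|u_2\|$.

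Projecting $w$ onto $\cL(\Delta)$ and $\cL(\Delta)^{\perp}$, the error contributions admit $\varepsilon$-estimates. First, $\|(I-Q)u_1\|=\|(I-Q)E(\Delta)u_1\|\leq\varepsilon\|u_1\|$ handles both $\|E(\Delta)(I-Q)u_1\|$ and $\|(I-E(\Delta))(I-Q)u_1\|$. Second, the identity $E(\Delta)Q(I-E(\Delta))=-E(\Delta)(I-Q)(I-E(\Delta))$ (from $E(\Delta)(I-E(\Delta))=0$) together with the adjoint bound $\|E(\Delta)(I-Q)\|=\|(I-Q)E(\Delta)\|=\varepsilon$ gives $\|E(\Delta)Qu_2\|\leq\varepsilon\|u_2\|$. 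Only $\|(I-E(\Delta))Qu_2\|\leq\|u_2\|$ is trivial. Setting $a=d_\Delta-\varepsilon$, $b=d_X-1$, $x=\|u_1\|$, $y=\|u_2\|$, collecting yields
\begin{align*}
\|E(\Delta)w\|&\geq ax-\varepsilon y,\\
\|(I-E(\Delta))w\|&\geq by-\varepsilon x.
\end{align*}

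By Pythagoras, $\|w\|^2=\|E(\Delta)w\|^2+\|(I-E(\Delta))w\|^2$. When both displayed lower bounds are nonnegative, $\|w\|^2\geq\|M(x,y)^{\top}\|^2$ for the symmetric matrix $M=\bigl(\begin{smallmatrix} a & -\varepsilon\\ -\varepsilon & b\end{smallmatrix}\bigr)$, whose smaller eigenvalue is $\lambda_-=\tfrac12\bigl[(a+b)-\sqrt{(a-b)^2+4\varepsilon^2}\bigr]$. Assuming WLOG $a\geq b$, squaring the inequality $a-b+4\varepsilon\geq\sqrt{(a-b)^2+4\varepsilon^2}$ collapses it to $8(a-b)\varepsilon+12\varepsilon^2\geq 0$, so $\lambda_-\geq\min(a,b)-2\varepsilon=\min\{d_\Delta-3\varepsilon,\,d_X-1-2\varepsilon\}=c$, and $\|w\|\geq c\|u\|$ as required.

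The expected main obstacle is the sign case in which one of the linear bounds is negative, so that squaring loses information. The hypothesis $c\geq 0$ forces $a,b\geq 2\varepsilon$, hence $ab\geq 4\varepsilon^2>\varepsilon^2$, and this rules out both brackets being negative simultaneously. In the one-sided case, say $ax-\varepsilon y<0$ (so $x<\varepsilon y/a$), one has $\|w\|\geq by-\varepsilon x\geq y(ab-\varepsilon^2)/a$ while $\sqrt{x^2+y^2}\leq y\sqrt{a^2+\varepsilon^2}/a$, reducing the desired $\|w\|\geq c\sqrt{x^2+y^2}$ to $(ab-\varepsilon^2)^2\geq c^2(a^2+\varepsilon^2)$. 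Treating this as a quadratic in $a$ and using $b\geq 2\varepsilon$ (so the larger root lies below $b$) verifies it; the symmetric case is analogous, completing the proof.
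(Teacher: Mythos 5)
Your argument is essentially sound and rests on the same core idea as the paper's proof: split with respect to the spectral projection $E:=E(\Delta)$ and control the off-diagonal interaction of $Q$ with this splitting by $\varepsilon=\Vert(I-Q)E\Vert$ (your bounds $\Vert(I-Q)u_1\Vert\le\varepsilon\Vert u_1\Vert$ and $\Vert EQu_2\Vert\le\varepsilon\Vert u_2\Vert$, and the two spectral lower bounds, are exactly the ingredients the paper uses). Where you differ is the combination step. The paper subtracts the off-diagonal blocks $(I-E)QE+EQ(I-E)$ at the operator level, costing $2\varepsilon\Vert u\Vert$ by the triangle inequality, and then applies orthogonality to the remaining block-diagonal vector, whose components obey $\Vert(T-z+i)Eu+iEQEu\Vert\ge(d_\Delta-\varepsilon)\Vert Eu\Vert$ and $\Vert(T-z)(I-E)u+i(I-E)Q(I-E)u\Vert\ge(d_X-1)\Vert(I-E)u\Vert$; no sign or case analysis arises. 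You instead estimate the two projections of $w$ componentwise and pass to the $2\times2$ comparison matrix $M$. That route is legitimate and in fact slightly sharper: your $\lambda_-$ satisfies $\lambda_-\ge\min\{a,b\}-\varepsilon=\min\{d_\Delta-2\varepsilon,\,d_X-1-\varepsilon\}\ge c$, so you recover (a bit more than) the stated constant; the price is the case analysis when a bracket goes negative, which the paper's ordering of the triangle inequality avoids entirely.

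Two points need tightening. First, $c\ge0$ is not a hypothesis of the lemma; just observe that for $c\le0$ the estimate is vacuous, so you may assume $c>0$, which is what licenses $a,b\ge2\varepsilon$. Second, and more substantively, your closing verification of $(ab-\varepsilon^2)^2\ge c^2(a^2+\varepsilon^2)$ is under-justified: the "quadratic in $a$ with larger root below $b$" reasoning treats $c$ as independent of $a$, which only covers the subcase $a\ge b$ (where $c=b-2\varepsilon$); when $a<b$ one has $c=a-2\varepsilon$ and that argument does not apply as written. The inequality is true in both subcases and the gap is cheap to fill, e.g. via $\sqrt{a^2+\varepsilon^2}\le a+\varepsilon$: if $a\le b$ then $ab-\varepsilon^2\ge a^2-\varepsilon^2\ge(a-2\varepsilon)(a+\varepsilon)\ge c\sqrt{a^2+\varepsilon^2}$, while if $b\le a$ then $ab-\varepsilon^2-(b-2\varepsilon)(a+\varepsilon)=-\varepsilon(b-2a-\varepsilon)\ge0$, giving $ab-\varepsilon^2\ge c\sqrt{a^2+\varepsilon^2}$ again. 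Alternatively, you can sidestep the case analysis altogether by subtracting the off-diagonal blocks of $Q$ first, as the paper does, and only then invoking orthogonality.
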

\begin{proof}
Let $z\in X$, $u\in\Dom(T)$ and $E:=E(\Delta)$. We have $\Vert(I-E)QE\Vert\le\varepsilon$ and therefore
$\Vert EQ(I-E)\Vert\le\varepsilon$, then using the equality
$Q = EQE +(I-E)QE + EQ(I-E) + (I-E)Q(I-E)$, we obtain
\begin{align*}
\Vert (T+iQ -z)u\Vert & =\Vert(T-z)(I-E)u + (T-z)Eu + iQu\Vert\\
&= \Vert(T-z)(I-E)u + (T-z)Eu\\
&\qquad + i(EQE +(I-E)QE + EQ(I-E) + (I-E)Q(I-E))u\Vert\\
&\ge \Vert(T-z)(I-E)u + i(I-E)Q(I-E)u\\
&\qquad +(T-z)Eu+ iEQEu\Vert - \Vert(I-E)QE + EQ(I-E)\Vert\\
&\ge\Vert(T-z)(I-E)u + i(I-E)Q(I-E)u\\
&\qquad+ (T-z)Eu + iEQEu\Vert - 2\varepsilon\Vert u\Vert.
\end{align*}
The vector $(T-z)(I-E)u$ satisfies the estimate $\Vert(T-z)(I-E)u\Vert\ge d_X\Vert(I-E)u\Vert$,
hence $\Vert(T-z)(I-E)u + i(I-E)Q(I-E)u\Vert\ge(d_X - 1)\Vert(I-E)u\Vert$.
The vector $(T-z)Eu + iEQEu$ satisfies the estimate
\begin{align*}
\Vert(T-z)Eu + iEQEu\Vert &= \Vert(T-z+i)Eu + iE(Q-I)Eu\Vert
\ge (d_\Delta - \varepsilon)\Vert Eu\Vert.
\end{align*}
Combining these estimates yields required result.
\end{proof}

We denote by $U_\varepsilon(a,b)$ the open set contained in $U(a,b)$ and which is exterior to the circles with centers $a$, $b$ and
radius $1+2\varepsilon$ and the circles with center $\lambda_j+i$ and radius $3\varepsilon$ for $1\le j\le d$. An immediate consequence of Lemma \ref{f} is the inclusion
\begin{equation}\label{incl}
U_\varepsilon(a,b)\subset\rho(T+iQ).
\end{equation}

\begin{lemma}\label{cor}
Let $\Vert (I-Q)E(\Delta)\Vert=0$,
then $\lambda_1+i,\dots,\lambda_d+i\in\sigma(T+iQ)$ with spectral subspace $\cL(\Delta)$, and $U(a,b)\subset\rho(T+iQ)$.
\end{lemma}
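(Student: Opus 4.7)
The hypothesis $\Vert(I-Q)E(\Delta)\Vert=0$ is equivalent to $E(\Delta)=QE(\Delta)$; taking adjoints (both projections are self-adjoint) gives also $E(\Delta)=E(\Delta)Q$, so $Q$ commutes with $E(\Delta)$ and restricts to the identity on $\cL(\Delta)$. Hence every eigenvector $\phi$ of $T$ at $\lambda_j$ satisfies $(T+iQ)\phi=(\lambda_j+i)\phi$, which establishes the inclusion $\{\lambda_j+i\}_{j=1}^d\subset\sigma(T+iQ)$ and shows $\cL(\Delta)$ is contained in the corresponding spectral subspace. The second assertion $U(a,b)\subset\rho(T+iQ)$ is an immediate instance of \eqref{incl} with $\varepsilon=0$: the circles of radius $1+2\varepsilon$ around $a,b$ and of radius $3\varepsilon$ around each $\lambda_j+i$ in the definition of $U_\varepsilon(a,b)$ collapse to $\Gamma_a$, $\Gamma_b$, and the isolated points $\lambda_j+i$ that are already excluded from $U(a,b)$, so $U_0(a,b)=U(a,b)$.

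It remains to verify that the spectral subspace is no larger than $\cL(\Delta)$. For this I would exploit the reducing decomposition $\mathcal{H}=\cL(\Delta)\oplus\cL(\Delta)^\perp$ (valid because both $T$ and $Q$ are reduced by it) and write $T+iQ=A_1\oplus A_2$ with $A_1=T|_{\cL(\Delta)}+iI$ and $A_2=(T+iQ)|_{\cL(\Delta)^\perp}$. The operator $A_1$ is a finite-dimensional normal operator with $\sigma(A_1)=\{\lambda_j+i\}_{j=1}^d$ and whole spectral subspace $\cL(\Delta)$. Choosing a contour $\Gamma$ inside $U(a,b)$ enclosing $\{\lambda_j+i\}_{j=1}^d$, the Riesz projection for $T+iQ$ splits as $P=P_1\oplus P_2$ with $P_1=I_{\cL(\Delta)}$, and the task reduces to showing $P_2=0$, equivalently $\lambda_j+i\in\rho(A_2)$ for each $j$.

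This last step is where I expect the only real work. Since $T|_{\cL(\Delta)^\perp}$ is self-adjoint with spectrum $\sigma(T)\setminus\{\lambda_1,\dots,\lambda_d\}$, and $a,b\in\rho(T)$ together with $a<\lambda_j<b$ give $\dist(\lambda_j,\sigma(T|_{\cL(\Delta)^\perp}))>0$, a short distance computation yields $\dist(\lambda_j+i,\sigma(T|_{\cL(\Delta)^\perp}))>1\ge\Vert Q|_{\cL(\Delta)^\perp}\Vert$. Applying the Neumann series to the factorization
\[
A_2-(\lambda_j+i)=\bigl(I+iQ|_{\cL(\Delta)^\perp}(T|_{\cL(\Delta)^\perp}-(\lambda_j+i))^{-1}\bigr)\bigl(T|_{\cL(\Delta)^\perp}-(\lambda_j+i)\bigr)
\]
then gives $\lambda_j+i\in\rho(A_2)$, which combined with the preceding observations yields the full statement.
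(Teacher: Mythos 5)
Your argument is correct, but it reaches the conclusion ``spectral subspace $=\cL(\Delta)$'' by a genuinely different route than the paper. The paper never decomposes the operator: it shows directly that any eigenvector of $T+iQ$ at $\lambda_j+i$ lies in $\cL(\{\lambda_j\})$ (by comparing the real quantity $\langle(T-\lambda_j)\phi,\phi\rangle$ with the purely imaginary quantity $i\langle(I-Q)\phi,\phi\rangle$), and then rules out Jordan chains by a norm argument: a second-order generalized eigenvector $\psi\perp\cL(\{\lambda_j\})$ would force $\Vert Q\psi\Vert^2=\Vert\phi\Vert^2+\Vert(T-(\lambda_j+i))\psi\Vert^2>\Vert\psi\Vert^2$, contradicting $\Vert Q\Vert=1$. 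You instead observe that $\Vert(I-Q)E(\Delta)\Vert=0$ makes $Q$ commute with $E(\Delta)$, so $\cL(\Delta)\oplus\cL(\Delta)^{\perp}$ reduces $T+iQ$, and you kill the complementary part of the Riesz projection by the factorization $A_2-(\lambda_j+i)=\bigl(I+iQ'(T'-(\lambda_j+i))^{-1}\bigr)(T'-(\lambda_j+i))$ together with $\dist(\lambda_j+i,\sigma(T'))\ge\sqrt{\min\{\lambda_j-a,b-\lambda_j\}^2+1}>1\ge\Vert Q'\Vert$. Your version buys a cleaner treatment of the spectral subspace: it identifies the range of the Riesz projection outright and sidesteps any discussion of semisimplicity (and of a possible quasinilpotent part), whereas the paper's eigenvector/Jordan-chain argument is more elementary and, unlike the commutation trick, is of the type that survives when $\Vert(I-Q)E(\Delta)\Vert=\varepsilon>0$, which is the setting of Theorem \ref{Q}. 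One small point to tighten: for an arbitrary contour $\Gamma\subset U(a,b)$ the vanishing of $P_2$ is equivalent to $\sigma(A_2)$ not meeting the \emph{interior} of $\Gamma$, not just to $\lambda_j+i\in\rho(A_2)$; either note that a closed curve in $U(a,b)$ can only enclose points of $U(a,b)\cup\{\lambda_1+i,\dots,\lambda_d+i\}$ (and $U(a,b)\subset\rho(T+iQ)$ already), or simply take $\Gamma$ to be a union of small circles about the points $\lambda_j+i$.
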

\begin{proof}
The last assertion is an immediate consequence of \eqref{incl}. Let $(T+iQ - (\lambda_j+i))\phi=0$, then
$(T-\lambda_j)\phi = i(I-Q)\phi$ and therefore
\begin{displaymath}
\langle(T-\lambda_j)\phi,\phi\rangle = i\langle(I-Q)\phi,\phi\rangle.
\end{displaymath}
The left hand side is real and the right hand side is purely imaginary, from which we deduce that $(I-Q)\phi=0$, therefore $(T-\lambda_j)\phi=0$ and hence $\phi\in\cL(\Delta)$.
It follows that $\cL(\Delta)$ is the space spanned by the eigenvectors associated to $T+iQ$ and the eigenvalues $\lambda_1+i,\dots,\lambda_d+i$. Suppose that $\lambda_j+i$ is not
semi-simple. The geometric eigenspace associated to $T + iQ$ and eigenvalue $\lambda_j+i$ is precisely $\cL(\{\lambda_j\})$
the eigenspace associated to $T$ and eigenvalue $\lambda_j$. There exists a non-zero vector $\psi\perp\cL(\{\lambda_j\})$ with
$(T+iQ-\lambda_j-i)\psi = \phi\in\cL(\{\lambda_j\})$.
We have
\begin{displaymath}
(T-(\lambda_j+i))\psi\perp\cL(\{\lambda_j\})\quad\textrm{with}\quad\Vert(T-(\lambda_j+i))\psi\Vert>\Vert\psi\Vert,
\end{displaymath}
and
\begin{displaymath}
iQ\psi = \phi - (T-(\lambda_j+i))\psi\quad\textrm{where}\quad\phi\perp(T-(\lambda_j+i))\psi.
\end{displaymath}
It follows that $\Vert Q\psi\Vert^2 = \Vert\phi\Vert^2 + \Vert(T-(\lambda_j+i))\psi\Vert^2 > \Vert\psi\Vert^2$,
which is a contradiction since $\Vert Q\Vert =1$.
\end{proof}

\begin{theorem}\label{Q}
Let $Q$ be finite rank, $\Vert (I-Q)E(\Delta)\Vert=\varepsilon<1/\sqrt{d}$,
\begin{equation}\label{circles1}
3\varepsilon<r<\min\Big\{\sqrt{(\lambda_j-a)^2+1},\sqrt{(b-\lambda_j)^2+1}\Big\}-1-2\varepsilon\quad\textrm{for}\quad 1\le j\le d
\end{equation}
and $\Gamma_j$ the circle with center $\lambda_j+i$ and radius $r$, and set $X=\cup_{j=1}^d\Gamma_j$.
If $\Gamma_i\cap\Gamma_j=\varnothing$ whenever $i\ne j$, then $\Gamma_j\subset\rho(T+iQ)$, $\dist(\lambda_j+i,\sigma(T+iQ)) < r$,
\[\Vert (T+iQ -z)u\Vert\ge c\Vert u\Vert\quad\textrm{for all}\quad u\in\Dom(T)\quad\textrm{and}\quad z\in\Gamma_j\]
with $c>0$ as in Lemma \ref{f}, and the dimension of the spectral subspace
associated to $T+iQ$ and the region enclosed by $\Gamma_j$ equals the dimension of $\cL(\{\lambda_j\})$.
\end{theorem}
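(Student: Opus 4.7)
The plan is to read off assertions (1) and (3) directly from Lemma~\ref{f} and the inclusion \eqref{incl}, and to extract the dimension statement (whence also (2)) by a linear homotopy $Q_s:=(1-s)E(\Delta)+sQ$, $s\in[0,1]$, pinned at the endpoint $s=0$ by Lemma~\ref{cor}. For (1) and (3) I verify $\Gamma_j\subset U_\varepsilon(a,b)$: the disjointness $\Gamma_i\cap\Gamma_j=\varnothing$ together with equal radii forces $|\lambda_i-\lambda_j|>2r$ for $i\ne j$, so every $z\in\Gamma_j$ satisfies $|z-(\lambda_k+i)|\ge r>3\varepsilon$ for each $k$, while the upper bound on $r$ in \eqref{circles1} gives $|z-a|,|z-b|>1+2\varepsilon$. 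Thus $\Gamma_j\subset U_\varepsilon(a,b)\subset\rho(T+iQ)$ by \eqref{incl}, and the resolvent estimate is the content of Lemma~\ref{f}.

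For the dimension, observe that $Q_s$ is self-adjoint with $\|Q_s\|\le1$, and that $(I-Q_s)E(\Delta)=s(I-Q)E(\Delta)$, so $\|(I-Q_s)E(\Delta)\|=s\varepsilon\le\varepsilon$; the companion identities $EQ_s(I-E)=sEQ(I-E)$ and $(I-E)Q_s(I-E)=s(I-E)Q(I-E)$ let the proof of Lemma~\ref{f} go through verbatim with $Q_s$ in place of $Q$, producing the uniform bound
\[
\|(T+iQ_s-z)u\|\ge\min\{d_X-1-2s\varepsilon,\,d_\Delta-3s\varepsilon\}\|u\|\ge c\|u\|
\]
for all $z\in X$, $u\in\Dom(T)$, and $s\in[0,1]$. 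In particular $\Gamma_j\subset\rho(T+iQ_s)$ throughout the homotopy. Since $Q_s$ is finite rank, $T+iQ_s$ is a type-(A) analytic family with common domain $\Dom(T)$ and $\sigma_{\ess}(T+iQ_s)=\sigma_{\ess}(T)\subset\mathbb{R}\setminus(a,b)$; the upper bound on $r$ further forces $\overline{\mathrm{int}(\Gamma_j)}\cap\mathbb{R}\subset(\lambda_j-\sqrt{r^2-1},\lambda_j+\sqrt{r^2-1})\subset(a,b)$, so $\overline{\mathrm{int}(\Gamma_j)}$ is disjoint from $\sigma_{\ess}(T+iQ_s)$ for every $s\in[0,1]$.

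Consequently the spectral projection
\[
P_j(s):=-\frac{1}{2\pi i}\int_{\Gamma_j}(T+iQ_s-\zeta)^{-1}\,d\zeta
\]
is finite rank and analytic in $s$, hence $\rank P_j(s)$ is constant on $[0,1]$. At $s=0$ the hypothesis of Lemma~\ref{cor} is satisfied with $Q$ replaced by $E(\Delta)$, identifying $\lambda_j+i$ as a semi-simple eigenvalue of $T+iE(\Delta)$ with eigenspace $\cL(\{\lambda_j\})$; by the disjointness of the $\Gamma_k$ and the reality of $\sigma(T)\setminus\Delta$, it is the unique point of $\sigma(T+iE(\Delta))$ enclosed by $\Gamma_j$, so $\rank P_j(0)=\dim\cL(\{\lambda_j\})$. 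Constancy transports this to $\rank P_j(1)=\dim\cL(\{\lambda_j\})$, and positivity of this rank yields assertion (2). The main obstacle is confirming that Lemma~\ref{f}'s argument survives the replacement of the projection $Q$ by the non-projection $Q_s$; this reduces to the elementary observation that every $\varepsilon$-dependent term in the original proof simply picks up a harmless factor of $s\in[0,1]$.
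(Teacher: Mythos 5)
Your argument is correct, but the homotopy you use for the dimension count is genuinely different from the paper's. The paper stays inside the class of orthogonal projections: for an orthonormal basis $\phi_1,\dots,\phi_e$ of $\cL(\{\lambda_j\})$ it rotates the range of $Q$ via $\hat{\phi}_k(t)=t\phi_k+(1-t)Q\phi_k$, uses the hypothesis $\varepsilon<1/\sqrt{d}$ to keep these vectors (together with a complement inside $\range(Q)$) linearly independent, lets $Q(t)$ be the orthogonal projection onto their span, checks $\Vert(I-Q(t))\phi\Vert\le\varepsilon$ on $\cL(\{\lambda_j\})$ so that Lemma \ref{f} applies as stated, and then obtains constancy of the rank of the Riesz projection by an analytic orthonormal basis argument, invoking Lemma \ref{cor} at $t=1$. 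You instead leave the class of projections, taking $Q_s=(1-s)E(\Delta)+sQ$ and observing that the proof of Lemma \ref{f} uses only self-adjointness, $\Vert Q_s\Vert\le 1$, and the block bounds $\Vert(I-E)Q_sE\Vert,\ \Vert EQ_s(I-E)\Vert,\ \Vert E(I-Q_s)E\Vert\le s\varepsilon$, all of which scale favourably along the segment; this is correct, and it buys you two things: the endpoint $s=0$ is exactly $E(\Delta)$, where Lemma \ref{cor} applies on the nose (the paper's endpoint $Q(1)$ merely contains $\cL(\{\lambda_j\})$ in its range, so its appeal to Lemma \ref{cor} there is looser), and you never use the hypothesis $\varepsilon<1/\sqrt{d}$, which the paper needs only for the linear-independence bookkeeping. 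The price is that Lemma \ref{f} cannot be cited as a black box and must be re-run for the non-projection interpolants, which you do explicitly. Two points you leave implicit are shared with the paper rather than being gaps of your own: the lower bound on $\Gamma_j$ gives injectivity and closed range, and invertibility requires the standard extra remark that $T+iQ_s-z$ is a finite-rank perturbation of the index-zero Fredholm operator $T-z$ (exactly the step the paper suppresses in passing from Lemma \ref{f} to \eqref{incl}); and the hypothesis $\Gamma_i\cap\Gamma_j=\varnothing$ for $i\ne j$ has to be read as referring to distinct eigenvalues, since the $\lambda_j$ are listed with multiplicity.
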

\begin{proof}
An immediate consequence of the condition \eqref{circles1} is that the circle $\Gamma_j$ does not intersect the circles $\Gamma_a$ and $\Gamma_b$. Furthermore,
\begin{displaymath}
c \ge \min\left\{\min_{1\le j\le d}\Big\{\sqrt{(\lambda_j-a)^2+1},\sqrt{(b-\lambda_j)^2+1}\Big\} - r-1-2\varepsilon, r - 3\varepsilon\right\} > 0,
\end{displaymath}
hence $\Gamma_j\subset\rho(T+iQ)$ follows from Lemma \ref{f}. It now suffices to prove the last assertion.

Let $\phi_1,\dots,\phi_e$ form an orthonormal basis for $\cL(\{\lambda_j\})$. Set \[\hat{\phi}_k=Q\phi_k\quad\textrm{and}\quad\hat{\phi}_k(t)=t\phi_k + (1-t)\hat{\phi}_k\quad\textrm{for}\quad t\in[0,1].\]
It is straightforward to show that the condition $\Vert(I-Q)E\Vert< 1/\sqrt{d}\le 1/\sqrt{e}$
implies that $\{\hat{\phi}_1(t),\dots,\hat{\phi}_e(t)\}$ form a linearly independent for any $t\in[0,1]$. Furthermore,
if we set $n=\rank(Q)$, then similarly to the proof of Theorem \ref{multthm} there exist vectors $\{\phi_{e+1},\dots,\phi_n\}$ such that
\begin{align*}
\range(Q)= \Span\{\hat{\phi}_1,\dots,\hat{\phi}_e,\phi_{e+1},\dots,\phi_n\}
\end{align*}
and $\{\hat{\phi}_1(t),\dots,\hat{\phi}_e(t),\phi_{e+1},\dots,\phi_n\}$ is a linearly independent set for every $t\in[0,1]$.
We define the family of orthogonal projections $Q(t)$ such that
\begin{displaymath}
\range(Q(t)) = \Span\{\hat{\phi}_1(t),\dots,\hat{\phi}_e(t),\phi_{e+1},\dots,\phi_n\}.
\end{displaymath}
Let $\phi\in\cL(\{\lambda_j\})$ with $\Vert\phi\Vert=1$, then $\phi = \alpha_1\phi_1+\cdots+\alpha_e\phi_e$ and $\Vert(I-Q)\phi\Vert\le\varepsilon$, therefore
\begin{align*}
\Vert(I-Q(t))\phi\Vert&\le \Vert\phi - \alpha_1\hat{\phi}_1(t)+\cdots+\alpha_e\hat{\phi}_e(t)\Vert\\
&= (1-t)\Vert\alpha_1(\phi_1 -\hat{\phi}_1)+\cdots+\alpha_e(\phi_e-\hat{\phi}_e)\Vert\\
&= (1-t)\Vert(I-Q)\phi\Vert\\
&\le\varepsilon
\end{align*}
and we deduce that $\Gamma_j\subset\rho(T+iQ(t))$ for all $t\in[0,1]$. If $P(t)$ is the spectral projection associated to the operator $T + iQ(t)$ and
the region enclosed by the circle $\Gamma$, then we have
\begin{align*}
P(t)&=-\frac{1}{2i\pi}\int_{\Gamma}(T+iQ(t)-z)^{-1}~dz.
\end{align*}
Evidently, $P(t)$ is a continuous family of projections, therefore $\rank(P(t))=e$ for all $t\in[0,1]$ follows from Lemma \ref{cor}.
\end{proof}

\subsection{Convergence of $\sigma(T+iQ_n)$}
We now assume that $T$ is a bounded self-adjoint operator. Denote by $E_n$ be the spectral measure associated to $P_nT|_{\cL_n}$ and let $Q_n=E_n(\Delta)P_n$. Evidently, $(Q_n)$ is a sequence of finite rank orthogonal projections.

\begin{lemma}\label{Qn}
$Q_n\stackrel{s}{\longrightarrow}E(\Delta)$ and $\Vert(I- Q_n)E(\Delta)\Vert=\mathcal{O}(\delta(\cL(\Delta),\cL_n))$.
\end{lemma}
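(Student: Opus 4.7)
Set $\varepsilon_n:=\delta(\cL(\Delta),\cL_n)$. I would prove the operator-norm bound first by a direct functional-calculus argument on each eigenspace, and then deduce strong convergence from the standard fact that the spectral projection onto a continuity interval is strongly continuous under uniformly bounded, strongly convergent self-adjoint perturbations.

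Let $\phi\in\cL(\Delta)$ with $\|\phi\|=1$ and decompose $\phi=\sum_{j=1}^d\phi_j$ orthogonally with $\phi_j\in\cL(\{\lambda_j\})$. Since $T\phi_j=\lambda_j\phi_j$, a direct computation gives
\begin{equation*}
(T_n-\lambda_j)P_n\phi_j=P_nTP_n\phi_j-\lambda_jP_n\phi_j=-P_nT(I-P_n)\phi_j,
\end{equation*}
so $\|(T_n-\lambda_j)P_n\phi_j\|\le\|T\|\,\varepsilon_n\,\|\phi_j\|$. The self-adjoint functional calculus on the finite-dimensional space $\cL_n$ then yields the Chebyshev-type bound
\begin{equation*}
\|E_n(\{\lambda:|\lambda-\lambda_j|>\eta\})P_n\phi_j\|\le\eta^{-1}\|(T_n-\lambda_j)P_n\phi_j\|\le\eta^{-1}\|T\|\,\varepsilon_n\,\|\phi_j\|
\end{equation*}
for every $\eta>0$. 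Choose $\eta>0$ so small that the balls $B_\eta(\lambda_j)$ are disjoint and $\bigcup_jB_\eta(\lambda_j)\subset\Delta$; then $\mathbb R\setminus\Delta\subset\{\lambda:|\lambda-\lambda_j|>\eta\}$, giving $\|(I-E_n(\Delta))P_n\phi_j\|=\mathcal O(\varepsilon_n\|\phi_j\|)$. Combining with $\|(I-P_n)\phi_j\|\le\varepsilon_n\|\phi_j\|$ via the splitting $(I-Q_n)\phi_j=(I-P_n)\phi_j+(I-E_n(\Delta))P_n\phi_j$ gives $\|(I-Q_n)\phi_j\|=\mathcal O(\varepsilon_n\|\phi_j\|)$; summing over $j$ and using Cauchy-Schwarz through $\sum_j\|\phi_j\|\le\sqrt d\,\|\phi\|$ delivers $\|(I-Q_n)E(\Delta)\|=\mathcal O(\varepsilon_n)$.

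For the strong limit, boundedness of $T$ together with $P_n\stackrel{s}{\to}I$ gives $P_nTP_n\stackrel{s}{\to}T$ in uniformly bounded self-adjoint operators on $\mathcal{H}$, while $E(\partial\Delta)=0$ because $\{a,b\}\subset\rho(T)$. A standard squeeze argument (approximate $\chi_\Delta$ from above and below by continuous functions whose difference has spectral support shrinking to $\partial\Delta$, use $E(\partial\Delta)=0$, then upgrade weak to strong convergence of the projections via $\|\chi_\Delta(A_n)\phi\|^2=\langle\chi_\Delta(A_n)\phi,\phi\rangle$) yields $\chi_\Delta(P_nTP_n)\stackrel{s}{\to}E(\Delta)$. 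Since $P_nTP_n$ is block-diagonal on $\cL_n\oplus\cL_n^\perp$, acting as $T_n$ on $\cL_n$ and as zero on $\cL_n^\perp$, the spectral calculus decomposes
\begin{equation*}
\chi_\Delta(P_nTP_n)=Q_n+\chi_\Delta(0)(I-P_n),
\end{equation*}
and $(I-P_n)\to 0$ strongly completes $Q_n\stackrel{s}{\to}E(\Delta)$. The single non-routine ingredient is the assertion that $\chi_\Delta(P_nTP_n)\to E(\Delta)$ strongly despite possible spectral pollution of $T_n$ inside $\Delta$; the content is that pollution eigenvectors must weakly converge to zero, a consequence of $\mu_\phi$-a.e.\ continuity of $\chi_\Delta$ that underlies the squeeze argument.
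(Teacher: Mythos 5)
Your proof is correct, and the two halves compare differently with the paper. For the norm bound you follow essentially the paper's own argument: the identity $(T_n-\lambda_j)P_n\phi_j=-P_nT(I-P_n)\phi_j$ and the Chebyshev-type spectral estimate are exactly the paper's computation, except that the paper works with $\eta=\dist(\lambda_j,\{a,b\})$ explicitly rather than a generic small $\eta$ (disjointness of the balls $B_\eta(\lambda_j)$ is not needed, only $B_\eta(\lambda_j)\subset\Delta$), and the paper states the bound eigenvector-by-eigenvector, leaving implicit the summation over the components of $\cL(\Delta)$ that you carry out with the $\sqrt d$ Cauchy--Schwarz step. For the strong limit your route is genuinely different in presentation: the paper decomposes $\phi=E(\Delta)\phi+(I-E(\Delta))\phi$, treats the eigenspace part directly via $P_nTP_nE(\{\lambda_j\})\phi\to\lambda_jE(\{\lambda_j\})\phi$, and cites \cite[Theorem VIII.1.15]{katopert} for the complementary part, whereas you re-prove that projection-convergence theorem from scratch by the squeeze argument for $P_nTP_n\stackrel{s}{\longrightarrow}T$ with $a,b\in\rho(T)$, and then peel off $Q_n$ through the block-diagonal identity $\chi_\Delta(P_nTP_n)=Q_n+\chi_\Delta(0)(I-P_n)$, which holds whether or not $0\in\Delta$. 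What your version buys is self-containedness (no appeal to the Kato/Reed--Simon result) and a clean passage from the compression $T_n$ on $\cL_n$ to an operator on all of $\mathcal H$; what it costs is reproving a standard theorem. Two small remarks: you implicitly use $P_n\stackrel{s}{\longrightarrow}I$, which does follow from the assumed density of $(\cL_n)$ in $\mathcal{H}_{\frak{t}}$; and your closing comment about pollution eigenvectors converging weakly to zero is a gloss rather than a needed ingredient --- the squeeze argument requires only that $a$ and $b$ are not eigenvalues of $T$, and spectral pollution strictly inside $\Delta$ never obstructs strong convergence of $Q_n$ on fixed vectors.
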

\begin{proof}
Let $\phi\in\mathcal{H}$, then $\phi = E(\Delta)\phi + (I-E(\Delta))\phi$. For each $1\le j\le d$ we have $P_nTP_nE(\{\lambda_j\})\phi\to TE(\{\lambda_j\})\phi = \lambda_jE(\{\lambda_j\})\phi$,
then it follows from the spectral theorem that $Q_nE(\Delta)\phi\to E(\Delta)\phi$. For any $\mu\in\Delta\cap\rho(T)$ we have $E_n(\mu)P_n\stackrel{s}{\longrightarrow}E(\mu)$ (\cite[Theorem VIII.1.15]{katopert}), from which we deduce that $Q_n(I-E(\Delta))\phi\to 0$.

For the second assertion let $(T-\lambda_j)\psi = 0$ with $\Vert\psi\Vert=1$ and set $\psi_n=P_n\psi$. Then
\begin{align*}
\Vert(P_nT-\lambda_j)\psi_n\Vert &= \Vert(P_nT-\lambda_j)\psi_n - P_n(T-\lambda_j)\psi\Vert\\
&= \Vert P_nT(\psi_n- \psi)\Vert\\
&\le\Vert P_nT\Vert\Vert(I-P_n)\psi\Vert\\
&\le\Vert T\Vert\dist(\psi,\cL_n)\\
&\le\Vert T\Vert\delta(\cL(\Delta),\cL_n),
\end{align*}
and
\begin{align*}
\Vert(I-E_n(\Delta))\psi_n\Vert^2 &= \int_{\mathbb{R}\backslash\Delta}~d\langle (E_n)_\mu\psi_n,\psi_n\rangle\\
&<\int_{\mathbb{R}\backslash(a,b)}\frac{\vert\mu-\lambda_j\vert^2}{\dist[\lambda_j,\{a,b\}]^2}~d\langle (E_n)_\mu\psi_n,\psi_n\rangle\\
&\le\frac{1}{\dist[\lambda_j,\{a,b\}]^2}\int_{\mathbb{R}}\vert\mu-\lambda_j\vert^2~d\langle (E_n)_\mu\psi_n,\psi_n\rangle\\
&=\frac{\Vert(P_nT-\lambda_j)\psi_n\Vert^2}{\dist(\lambda_j,\{a,b\})^2}\\
&\le\frac{\Vert T\Vert^2\delta(\cL(\Delta),\cL_n)^2}{\dist(\lambda_j,\{a,b\})^2}.
\end{align*}
Therefore
\begin{align*}
\Vert(I- E_n(\Delta)P_n)\psi\Vert &\le \Vert(I-E_n(\Delta))\psi_n\Vert + \Vert(I-E_n(\Delta)P_n)(\psi-\psi_n)\Vert\\
&\le \frac{\Vert T\Vert\delta(\cL(\Delta),\cL_n)}{\dist(\lambda_j,\{a,b\})} + \Vert(I-P_n)\psi\Vert\\
&\le\left(\frac{\Vert T\Vert}{\dist(\lambda_j,\{a,b\})} + 1\right)\delta(\cL(\Delta),\cL_n),
\end{align*}
from which the result follows.
\end{proof}

In particular, we have $T+iQ_n\stackrel{s}{\longrightarrow}T+iE(\Delta)$. Let $\cL_n(j)$ be the spectral subspace associated to those eigenvalues $\mu_{n,1},\dots,\mu_{n,e}$ (repeated according to multiplicity) of $T+iQ_n$ which lie in a neighbourhood of $\lambda_j+i$ (see Theorem \ref{Q}) and $\varepsilon_n:=\Vert(I-Q_n)E(\Delta)\Vert$. Theorem \ref{Q}, Lemma \ref{Qn} and \cite[Theorem 6.6]{chat} together imply the following estimate
\begin{equation}\label{chatrates}
\hat{\delta}(\cL(\{\lambda_j\}),\cL_n(j)) = \mathcal{O}(\delta(\cL(\Delta),\cL_n)).
\end{equation}

\begin{lemma}\label{eigconv}
$\max_{1\le k\le e}\vert\mu_{n,k}-\lambda_j-i\vert=\mathcal{O}(\delta(\cL(\Delta),\cL_n)^2)$.
\end{lemma}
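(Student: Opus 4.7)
Set $\delta_n:=\delta(\cL(\Delta),\cL_n)$ and $\varepsilon_n:=\|(I-Q_n)E(\Delta)\|$. By Lemma \ref{Qn}, $\varepsilon_n=\mathcal{O}(\delta_n)$, and by \eqref{chatrates}, $\hat{\delta}(\cL(\{\lambda_j\}),\cL_n(j))=\mathcal{O}(\delta_n)$. The overall strategy is a Rayleigh-quotient style calculation: because $T$ is self-adjoint and $Q_n$ is an orthogonal projection, the quantity $\langle(T+iQ_n)\psi,\psi\rangle$ has real part equal to $\langle T\psi,\psi\rangle$ and imaginary part equal to $\|Q_n\psi\|^2$, both of which admit variational analysis around a true eigenvector of $T$ in $\cL(\{\lambda_j\})$; this is exactly the mechanism that produces quadratic (rather than linear) convergence.

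Fix $k$ and choose a unit eigenvector $\psi\in\cL_n(j)$ with $(T+iQ_n)\psi=\mu_{n,k}\psi$; such a $\psi$ exists for every eigenvalue of the restriction $(T+iQ_n)|_{\cL_n(j)}$, and repeated eigenvalues share the bound obtained from any one of their eigenvectors. By the subspace gap estimate pick $\phi\in\cL(\{\lambda_j\})$ with $\|\psi-\phi\|\le C\delta_n$, and write $\eta:=\psi-\phi$. Pairing the eigenvalue equation with $\psi$ gives
\[
\mu_{n,k}=\langle T\psi,\psi\rangle+i\|Q_n\psi\|^2,
\]
so that $\Re\mu_{n,k}=\langle T\psi,\psi\rangle$ and $\Im\mu_{n,k}=1-\|(I-Q_n)\psi\|^2$.

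Using $T\phi=\lambda_j\phi$, self-adjointness of $T$, and $1=\|\phi\|^2+2\Re\langle\phi,\eta\rangle+\|\eta\|^2$, the real part collapses to
\[
\Re\mu_{n,k}-\lambda_j=\langle(T-\lambda_j)\eta,\eta\rangle,
\]
whose modulus is bounded by $(\|T\|+|\lambda_j|)\|\eta\|^2=\mathcal{O}(\delta_n^2)$. For the imaginary part, since $\phi\in\cL(\{\lambda_j\})\subset\cL(\Delta)$ we have $\|(I-Q_n)\phi\|=\|(I-Q_n)E(\Delta)\phi\|\le\varepsilon_n\|\phi\|=\mathcal{O}(\delta_n)$, and together with $\|\eta\|=\mathcal{O}(\delta_n)$ this gives $\|(I-Q_n)\psi\|=\mathcal{O}(\delta_n)$, whence $|\Im\mu_{n,k}-1|=\|(I-Q_n)\psi\|^2=\mathcal{O}(\delta_n^2)$. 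The triangle inequality then yields $|\mu_{n,k}-\lambda_j-i|=\mathcal{O}(\delta_n^2)$ with constants uniform in $k$.

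The only conceptual care needed is to ensure that the approximating $\phi$ genuinely lies in $\cL(\{\lambda_j\})$ rather than just near it, so that both first-order terms cancel simultaneously: the term $2\lambda_j\Re\langle\phi,\eta\rangle$ disappears via $T\phi=\lambda_j\phi$ and $\|\psi\|=1$, and $(I-Q_n)\phi$ is controlled by $\varepsilon_n$ because $E(\Delta)\phi=\phi$. Both of these ingredients are exactly supplied by \eqref{chatrates} and Lemma \ref{Qn}; the non-self-adjointness of $T+iQ_n$ is harmless because we only need one genuine eigenvector per distinct eigenvalue, and the bound propagates automatically to any higher algebraic multiplicities.
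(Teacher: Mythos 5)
Your argument is correct, but it is a genuinely different proof from the one in the paper. You exploit the special structure of the perturbation: pairing $(T+iQ_n)\psi=\mu_{n,k}\psi$ with a unit eigenvector gives $\Re\mu_{n,k}=\langle T\psi,\psi\rangle$ and $\Im\mu_{n,k}=\Vert Q_n\psi\Vert^2$, and then self-adjointness of $T$ kills all first-order cross terms in the real part (leaving $\langle(T-\lambda_j)\eta,\eta\rangle$), while Lemma \ref{Qn} together with \eqref{chatrates} makes $\Vert(I-Q_n)\psi\Vert=\mathcal{O}(\delta(\cL(\Delta),\cL_n))$, so both parts are second order; your remark that each distinct eigenvalue, even if defective, has at least one genuine eigenvector in the finite-dimensional subspace $\cL_n(j)$ correctly disposes of the multiplicity issue. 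The paper instead argues as in Chatelin's Theorem 6.11: it builds the basis $\psi_{n,k}=[E(\{\lambda_j\})|_{\cL_n(j)}]^{-1}\psi_k$ of $\cL_n(j)$, compares the matrix $B$ of the restriction of $T+iQ_n$ with the diagonal matrix $A=(\lambda_j+i)I$, and bounds each entry $\vert A_{lk}-B_{lk}\vert$ by a product of two first-order factors, concluding by finite-dimensional eigenvalue perturbation. Your route is more elementary and self-contained (no need for the bijectivity of $E(\{\lambda_j\})|_{\cL_n(j)}$ or a uniform bound on its inverse), but it relies essentially on $T$ being self-adjoint and the perturbation being $i$ times an orthogonal projection; the paper's matrix-comparison argument is less structure-dependent and sits naturally alongside the superconvergence machinery (cf.\ \eqref{chatsuper}) used elsewhere in the paper.
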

\begin{proof}
We argue similarly to the proof of \cite[Theorem 6.11]{chat}. Let $\psi_1,\dots,\psi_e$ be an orthonormal basis for $\cL(\{\lambda_j\})$, then the restriction of $T+iE(\Delta)$ to $\cL(\{\lambda_j\})$ has the matrix representation
\begin{equation}\label{matrixA}
A_{l,k} = \langle(T+iE(\Delta))\psi_k,\psi_l \rangle=(\lambda_j+i)\delta_{lk}.
\end{equation}
It follows from \eqref{chatrates} that
$E(\{\lambda_j\})|_{\cL_n(j)}:\cL_n(j)\to\cL(\{\lambda_j\})$ is a bijection for all sufficiently large $n$. We set
$\psi_{n,k}:=[E(\{\lambda_j\})|_{\cL_n(j)}]^{-1}\psi_k$. Since
\begin{equation}\label{matrixI}
\langle\psi_{n,k},\psi_l\rangle = \langle\psi_{n,k},E(\{\lambda_j\})\psi_l\rangle =\langle\psi_k,\psi_l\rangle=\delta_{lk},
\end{equation}
the restriction of $T+iE_n(\Delta)$ to $\cL_n(j)$ has the matrix representation
\[B_{l,k} = \langle(T+iE_n(\Delta))\psi_{n,k},\psi_l \rangle,\]
and $\mu_{n,1},\dots,\mu_{n,e}$ are the eigenvalues of the matrix $B$. We have
\begin{align*}
\vert A_{lk} - B_{lk}\vert &= \vert\langle(T+iE(\Delta))\psi_k,\psi_l\rangle - \langle(T+iE_n(\Delta))\psi_{n,k},\psi_l\rangle\vert\\
&=\vert\langle(T+iE(\Delta))E(\Delta)\psi_{n,k},\psi_l\rangle - \langle(T+iE_n(\Delta))\psi_{n,k},\psi_l\rangle\vert\\
&=\vert\langle(iE(\Delta)-iE_n(\Delta))\psi_{n,k},\psi_l\rangle\vert\\
&=\vert\langle(I-E_n(\Delta))\psi_{n,k},\psi_l\rangle\vert\\
&=\vert\langle(I-E_n(\Delta))\psi_{n,k},(I-E_n(\Delta))P_n\psi_l\rangle\vert\\
&\le\Vert(I-E_n(\Delta))\psi_{n,k}\Vert\Vert(I-E_n(\Delta))P_n\psi_l\Vert.
\end{align*}
Using Lemma \ref{Qn}, the second term on the right hand side satisfies
\[
\Vert(I-E_n(\Delta))P_n\psi_l\Vert \le \Vert(I-Q_n)\psi_l\Vert + \Vert(I-P_n)\psi_l\Vert=\mathcal{O}(\delta(\cL(\Delta),\cL_n)).
\]
Since $\psi_{n,k}=[E(\{\lambda_j\})|_{\cL_n(j)}]^{-1}\psi_k=:u_n\in\cL_n(j)$ and $\Vert[E(\{\lambda_j\})|_{\cL_n(j)}]^{-1}\Vert\le M$ for some $M>0$ and all sufficiently large $n$, it follows from \eqref{chatrates} that $u_n = v_n + w_n$ where $v_n\in\cL(\Delta)$ and $\Vert w_n\Vert\le \mathcal{O}(\delta(\cL(\Delta),\cL_n))$. Hence
\begin{align*}
\Vert(I-E_n(\Delta))\psi_{n,k}\Vert & = \Vert(I-E_n(\Delta))u_n\Vert\\
&\le\Vert(I-E_n(\Delta))P_nv_n\Vert + \mathcal{O}(\delta(\cL(\Delta),\cL_n))\\
&=\mathcal{O}(\delta(\cL(\Delta),\cL_n)).
\end{align*}
Combining these estimates we obtain $\Vert A - B\Vert_{\mathbb{C}^e} = \mathcal{O}(\delta(\cL(\Delta),\cL_n)^2)$. The result follows from this estimate, \eqref{matrixA} and \eqref{matrixI}.
\end{proof}

We denote by $U_{\tau,r}(a,b)$ the compact set enclosed by the rectangle $\{z\in\mathbb{C}:a\le\Re z\le b\textrm{ and }0\le\Im z \le 1\}$ and exterior to the circles with centers $a$, $b$ and radius $1+\tau$ and the circles with center $\lambda_j+i$ and radius $r^2$ for $1\le j\le d$. We have proved the following Theorem.

\begin{theorem}\label{QQ}
There exist sequences $(\tau_n)$ and $(r_n)$ of non-negative reals, with
\[0\le \tau_n = \mathcal{O}(\delta(\cL(\Delta),\cL_n))\quad\textrm{and}\quad 0\le r_n = \mathcal{O}(\delta(\cL(\Delta),\cL_n)^2),\]
such that $U_{\tau_n,r_n}(a,b)\backslash\mathbb{R}\subset\rho(T+iQ_n)$ for all sufficiently large $n$.
Moreover, if $\Gamma_j$ is the circle with center $\lambda_j+i$ and radius $r_n$, then $\Gamma_j\subset\rho(T+iQ_n)$ and the dimension of the spectral subspace associated to $T+iQ_n$ and the region enclosed by $\Gamma_j$ equals the dimension of $\cL(\{\lambda_j\})$.
\end{theorem}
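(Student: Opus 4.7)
The theorem is a bookkeeping consequence of the lemmas and Theorem \ref{Q} already established in this section; the phrase ``\emph{We have proved the following Theorem}'' immediately preceding the statement signals that the proof is really a gathering step rather than a new argument.

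I would proceed as follows. Set $\varepsilon_n := \Vert (I-Q_n)E(\Delta)\Vert$. By Lemma \ref{Qn}, $\varepsilon_n = \mathcal{O}(\delta(\cL(\Delta),\cL_n))$, so taking $\tau_n := 2\varepsilon_n$ yields a sequence with the required first-order rate. For all sufficiently large $n$, $\varepsilon_n$ is small enough to satisfy hypothesis \eqref{circles1} of Theorem \ref{Q} with some comfortable auxiliary radius $\rho$ (for instance $\rho := 4\varepsilon_n$), which produces the spectral cluster $\mu_{n,1},\dots,\mu_{n,e}$ of $T+iQ_n$ inside each disc of radius $\rho$ centred at $\lambda_j+i$, with total multiplicity equal to $\dim\cL(\{\lambda_j\})$. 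Lemma \ref{eigconv} then controls this cluster as $\max_{j,k}|\mu_{n,k}-\lambda_j-i| = \mathcal{O}(\delta(\cL(\Delta),\cL_n)^2)$, so setting $r_n := 2\max_{j,k}|\mu_{n,k}-\lambda_j-i|$ delivers the required second-order rate.

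With these definitions in hand, the two resolvent statements fall out directly. For $\Gamma_j$: by construction every $\mu_{n,k}$ near $\lambda_j+i$ lies strictly inside $\Gamma_j$, and since Theorem \ref{Q} guarantees that these are the \emph{only} spectral points of $T+iQ_n$ inside the larger disc of radius $\rho$, the circle $\Gamma_j$ sits in $\rho(T+iQ_n)$; the Cauchy spectral projection for $\Gamma_j$ then coincides with the one for the larger circle, so the enclosed spectral subspace has dimension $\dim\cL(\{\lambda_j\})$. For the inclusion $U_{\tau_n,r_n}(a,b)\setminus\mathbb{R}\subset\rho(T+iQ_n)$, I would decompose the set into the part at distance at least $3\varepsilon_n$ from every $\lambda_j+i$, which lies in $U_{\varepsilon_n}(a,b)\subset\rho(T+iQ_n)$ by the inclusion \eqref{incl}, and the remaining part close to some $\lambda_j+i$, where the only possible spectral points are those of the cluster $\mu_{n,k}$, all already contained in the small excluded disc appearing in the definition of $U_{\tau_n,r_n}(a,b)$.

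The main obstacle — and the only place that demands genuine care — is the last decomposition: one has to reconcile the three exclusion radii (the $3\varepsilon_n$ coming from Lemma \ref{f} and \eqref{incl}, the radius $r_n$ defining $\Gamma_j$, and the radius appearing in the definition of $U_{\tau_n,r_n}$) so that every point of the rectangle lying strictly above the real line is accounted for by exactly one of the two arguments. Once $\tau_n$ and $r_n$ are calibrated to the two scales supplied by Lemmas \ref{Qn} and \ref{eigconv}, no further estimates are needed.
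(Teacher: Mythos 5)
Your proposal is correct and follows essentially the same route as the paper, whose "proof" is precisely the assembly you describe: the resolvent estimate of Lemma \ref{f} and the inclusion \eqref{incl} with $\varepsilon_n=\Vert(I-Q_n)E(\Delta)\Vert=\mathcal{O}(\delta(\cL(\Delta),\cL_n))$ from Lemma \ref{Qn}, the cluster multiplicity from Theorem \ref{Q}, and the $\mathcal{O}(\delta(\cL(\Delta),\cL_n)^2)$ localisation of the cluster from Lemma \ref{eigconv}. The only calibration worth adding is to keep $r_n$ strictly positive (e.g.\ $r_n:=\max_{j,k}\vert\mu_{n,k}-\lambda_j-i\vert+\delta(\cL(\Delta),\cL_n)^2$), so that $\Gamma_j\subset\rho(T+iQ_n)$ also holds in the degenerate case where the cluster sits exactly at $\lambda_j+i$, which your choice $r_n=2\max_{j,k}\vert\mu_{n,k}-\lambda_j-i\vert$ would reduce to a point.
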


\subsection{Convergence of $\sigma(T+iQ_{m},\cL_n)$}
In this section we assume that $T$ is bounded and $m\in\mathbb{N}$ is fixed. We consider the Galerkin approximation of a non-real eigenvalue $\mu\in\sigma(T+iQ_m)$ which lies in a neighbourhood of $\lambda_j+i$. First, we note that by Theorem \ref{spectralinc} there is no spectral pollution away from the real line, hence
\[\sigma(T+iQ_m,\cL_\infty)\backslash\mathbb{R} = \sigma(T+iQ_m)\backslash\mathbb{R}.\]
If $\Gamma\subset\rho(T+iQ_m)$ is a circle with center $\mu$, which encloses no other element from $\sigma(T+iQ_m)$ and does not intersect $\mathbb{R}$, then by Theorem \ref{multthm}, for all sufficiently large $n$ the multiplicity of those elements from $\sigma(T+iQ_m,\cL_n)$ enclosed by $\Gamma$ is equal to the multiplicity of $\mu$. Furthermore, by Theorem \ref{resolvent_bound}, $\Gamma$ is a $P_n(T+iQ_m)|_{\cL_n}$-regular set. With these three properties, the sequence of operators $P_n(T+iQ_m)P_n$ is said to be a strongly stable approximation of $T + iQ_m$ in the interior of $\Gamma$; see \cite[Section 5.2 \& 5.3]{chat}. This allows the application of the following well-known super-convergence result for strongly stable approximations.

Let $\mathcal{M}$ (respectively $\mathcal{M^*}$) be the spectral subspace associated to the operator $T+iQ_m$ (respectively $T-iQ_m$) and eigenvalue $\mu$ (respectively $\overline{\mu}$). Let $\mu$ have algebraic multiplicity $e$ and let $z_1,\dots,z_e$ be those (repeated) eigenvalues from $\sigma(T+iQ_m,\cL_n)$ which lie in a neighbourhood of $\mu$ and set $\hat{z}_n = (z_1+\dots+z_e)/e$, then
\begin{equation}\label{chatsuper}
\vert\hat{z}_n - \mu\vert = \mathcal{O}(\delta(\mathcal{M},\cL_n)\delta(\mathcal{M}^*,\cL_n));
\end{equation}
see \cite[Theorem 6.11]{chat}.

We therefore have the following strategy for approximating the eigenvalues in the region $\Delta$:
\newcounter{counter_assump1}
\begin{list}{{\rm\textbf{(\arabic{counter_assump1})}}}%
{\usecounter{counter_assump1}
\setlength{\itemsep}{0.5ex}\setlength{\topsep}{1.1ex}
\setlength{\leftmargin}{7ex}\setlength{\labelwidth}{7ex}}
\item calculate $\sigma(T,\cL_m)$ and choose $Q_m=E_m(\Delta)P_m$
\item calculate $\sigma(T+iQ_m,\cL_n)$ for $\dim \cL_m\ll\dim\cL_n$.
\end{list}

\begin{example}
With $\mathcal{H}=L^2(-\pi,\pi)$ we consider the bounded self-adjoint operator
\[
T\phi = a(x)\phi + 10\langle\phi,\psi_0\rangle\psi_0\quad\textrm{where}\quad a(x)=\begin{cases}
  -2\pi - x & \text{for}\quad -\pi<x\le 0, \\
  2\pi - x & \text{for}\quad 0<x\le\pi,
  \end{cases}
\]
and $\psi_k = e^{-ikx}$ for $k\in\mathbb{Z}$. We have $\sigma_{\ess}(T)=[-2\pi,-\pi]\cup[\pi,2\pi]$, and $\sigma_{\dis}(T)$ consists of the two simple eigenvalues $\lambda_1\approx -1.64834270$ and $\lambda_2\approx 11.97518502$; see \cite[Lemma 12]{DP}. We note that the eigenvalue $\lambda_1$ lies in the gap in $\sigma_{\ess}(T)$.

\begin{figure}[h!]
\centering
\includegraphics[scale=.45]{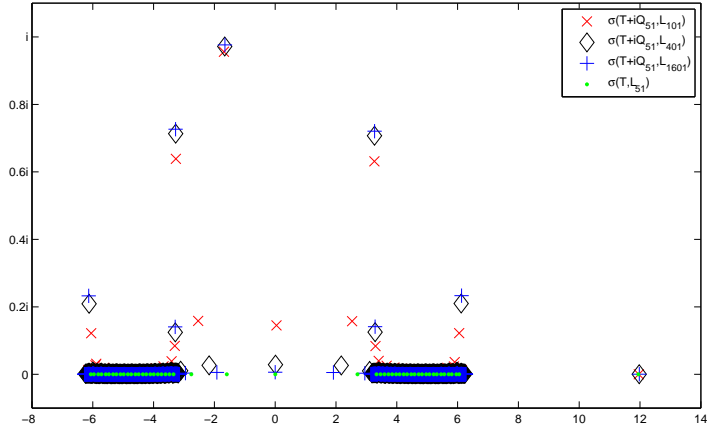}
\caption{$\sigma(T+iQ_{51},L_n)$ for $n=101, 401, 1601$ and $\sigma(T,L_{51})$.}
\end{figure}

Let $\cL_{2n+1} = \Span\{e^{-inx},\dots,e^{inx}\}$. We find that $\sigma(T,\cL_{51})$ has four eigenvalues in the interval $(-\pi,\pi)$. With $Q_{51}=E_{51}((-\pi,\pi))P_{51}$ we calculate $\sigma(T+iQ_{51},\cL_{2n+1})$ for $n=50,200$ and $800$. The results are displayed in Figure 1, and, consistent with Theorem \ref{QQ}, suggest that $\sigma(T+iQ_{51})$ has a simple eigenvalue near $\lambda_1 + i$.
\begin{figure}[h!]
\centering
\includegraphics[scale=.45]{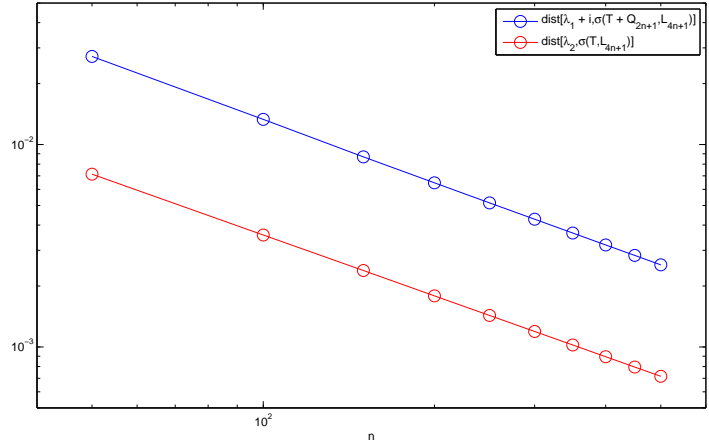}
\caption{Convergence to $\lambda_1+i$ using $\sigma(T+iQ_{2n+1},L_{4n+1})$ compared to the convergence to $\lambda_2$ using $\sigma(T,L_{4n+1})$.}
\end{figure}
Calculating $\sigma(T+iQ_{2n+1},\cL_{20n+1})$ with $n=4,8,12,\dots,40$ suggests that we have
\begin{equation}\label{seq1}
\dist(\lambda_1+i,\sigma(T+iQ_{2n+1},\cL_{20n+1})) \approx \mathcal{O}(n^{-1}).
\end{equation}
The following estimate holds:
\begin{equation}\label{perprates2}
\delta(\cL(\{\lambda_1\}),\cL_{2n+1})=\mathcal{O}(n^{-1/2});
\end{equation}
see for example \cite[Lemma 3.1]{bo}. Combining this estimate with Theorem \ref{QQ} we obtain
\begin{equation}\label{perprates0}
\dist(\lambda_1+i,\sigma(T+iQ_n)) = \mathcal{O}(n^{-1})
\end{equation}
which is consistent with \eqref{seq1}. The latter suggests that in Theorem \ref{QQ} the convergence rate for $r_n$ is sharp.

For a fixed and sufficiently large $m$ we denote by $\mathcal{M}$ (respectively $\mathcal{M}^*$) the eigenspace associated to the simple eigenvalue of $T+iQ_m$ (respectively $T-iQ_m$) which lies in a neighbourhood of $\lambda_1+i$ (respectively $\lambda_1-i$). Then the following estimates hold:
\begin{equation}\label{perprates}
\delta(\mathcal{M},\cL_n)=\mathcal{O}(n^{-1/2})\quad\textrm{and}\quad\delta(\mathcal{M}^*,\cL_n)=\mathcal{O}(n^{-1/2});
\end{equation}
see for example \cite[Lemma 3]{bo}.
For the approximation of the eigenvalue $\lambda_j$ we calculate $\sigma(T+iQ_{2n+1},\cL_{4n+1})$ with $n=50,100,150,\dots,500$. For comparison, we also approximate the eigenvalue $\lambda_2$ which lies outside the convex hull of the essential spectrum and may therefore be approximated without encountering spectral pollution. The results are displayed in Figure 2 and suggest that
\begin{align}
&\dist(\lambda_1+i,\sigma(T+iQ_{2n+1},\cL_{4n+1}))=\mathcal{O}(n^{-1})\quad\textrm{and}\label{rate1}\\
&\dist(\lambda_2,\sigma(T,\cL_{4n+1}))=\mathcal{O}(n^{-1})\label{rate2}.
\end{align}
The convergence in \eqref{rate1} is consistent with \eqref{perprates0}, \eqref{perprates} and \eqref{chatsuper}. The convergence
in \eqref{rate2} follows from \eqref{perprates2} and the well-known superconvergence result for an eigenvalue lying outside the convex hull of the essential spectrum of bounded self-adjoint operator.
\end{example}

\void{

\subsection{Examples}
The preceding results suggest the following strategy for approximating eigenvalues in a region $\Delta\subset\mathbb{R}$:
\newcounter{counter_assump1}
\begin{list}{{\rm\textbf{(\arabic{counter_assump1})}}}%
{\usecounter{counter_assump1}
\setlength{\itemsep}{0.5ex}\setlength{\topsep}{1.1ex}
\setlength{\leftmargin}{7ex}\setlength{\labelwidth}{7ex}}
\item calculate $\sigma(T,\cL_n)$ and choose $Q_n=E_n(\Delta)P_n$
\item calculate $\sigma(T+iQ_n,\cL_m)$ for $\dim \cL_n\ll\dim\cL_m$.
\end{list}
For a given eigenvalue $\lambda\in\Delta$ of multiplicity $d$, assume that $\cL_n$ contains a sufficiently good approximation of $\cL(\{\lambda\})$ with $\varepsilon_n\ll 1$ satisfying \eqref{gam0}. Theorem \ref{pert} ensures that $\dist[\lambda+i,\sigma(T+iQ_n)]<3\gamma_n$ where $\gamma_n=\mathcal{O}(\varepsilon_n)$. Theorem \ref{spectralinc}
then ensures that
\[\sigma(T+iQ)\backslash\mathbb{R}=\sigma(T+iQ_n,\cL_\infty)\backslash\mathbb{R}\]
so that the non-real eigenvalues of $T+iQ_n$, in particular those near $\lambda+i$, can be approximated by the Galerkin method without encountering spectral pollution.
In fact, combining Theorem \ref{spectralinc} with \eqref{incl} we obtain
\[U_{\gamma_n}(a,b)\cap\sigma(T+iQ_n,\cL_\infty)=\varnothing.\]
Furthermore, it follows that if $K$ is any closed subset of $U_{\gamma_n}(a,b)$ then for some $N\in\mathbb{N}$ we will have
\[K\cap\sigma(T+iQ_n,\cL_m)=\varnothing\quad\forall m\ge N.\]
Theorem \ref{multthm} ensures that the multiplicity of non-real eigenvalues of $T+iQ_n$ are captured, i.e. the
total-multiplicity of those elements from $\sigma(T+iQ_n,\cL_m)$ which lie close to $\lambda+i$ will be equal to $d$.

\begin{example}
With $\mathcal{H}=L^2(-\pi,\pi)$ we consider the bounded operator
\[
T\phi = a(x)\phi + 10\langle\phi,\psi_0\rangle\psi_0\quad\textrm{where}\quad a(x)=\begin{cases}
  -2\pi - x & \text{for}\quad \pi<x\le 0, \\
  2\pi - x & \text{for}\quad 0<x\le\pi
  \end{cases}
\]
where $\psi_k = e^{-ikx}$. We have $\sigma_{\ess}(T)=[-2\pi,-\pi]\cup[\pi,2\pi]$, and $\sigma_{\dis}(T)$ consists of the two simple eigenvalues $\lambda_1\approx -1.64834270$ and $\lambda_2\approx 11.97518502$; see \cite[Lemma 12]{DP}. We note that the eigenvalue $\lambda_1$ lies in the gap in $\sigma_{\ess}(T)$.

\begin{figure}[h!]
\centering
\includegraphics[scale=.45]{10july1.eps}
\caption{}
\end{figure}

We find that $\sigma(T,\cL_{51})$ has four eigenvalues in the interval $(-\pi,\pi)$, i.e. within the gap in $\sigma_{\ess}(T)$. With $Q_{51}=E_{51}((-\pi,\pi))P_{51}$ we calculate $\sigma(T+iQ_{51},\cL_{2n+1})$ for $n=50,200$ and $800$. The results are displayed in figure 1 and suggest that $\sigma(T+iQ_{51})$ does indeed have a simple eigenvalue near $\lambda_1 + i$. The results also suggest that $T+iQ_{51}$ has several additional non-real eigenvalues, however, they all appear to lie outside the
region $U(-\pi,\pi)$. There are also elements from $\sigma(T+iQ_{51},\cL_{2n+1})$ which appear to converge to limits in $U(-\pi,\pi)\cap\mathbb{R}$, we note that although
$\sigma(T+iQ_{51})\cap U_{\gamma_n}(-\pi,\pi)=\varnothing$ we can encounter spectral pollution in this region. We suspect the three elements from $\sigma(T+iQ_{51},\cL_{1601})$ with real parts comfortably inside $(-\pi,\pi)$ and very small imaginary parts, are converging to points of spectral pollution.

\begin{figure}[h!]
\centering
\includegraphics[scale=.45]{july11pic1.eps}
\caption{}
\end{figure}


\begin{figure}[h!]
\centering
\includegraphics[scale=.45]{july11pic2.eps}
\caption{}
\end{figure}

Calculation of $\sigma(T+iQ_{2n+1},\cL_{20n+1})$ reveals a sequence $(z_n)$ which appears to be converging to $\lambda_1+i$, with
$\vert z_n-\lambda_1-i\vert\to 0$ at a comparable rate to $\dist[\lambda_1,\sigma(T,\cL_{2n+1})]\to 0$. We also note that Figure 2 suggests
\[\dist[\lambda_1+i,\sigma(T+iQ_{2n+1})]\approx\frac{\dist[\lambda_1,\sigma(T,\cL_{2n+1})]}{4}\]
this is surprising since Theorem \ref{pert} only ensures
\[\dist[\lambda_1+i,\sigma(T+iQ_{2n+1})]=\mathcal{O}\Big(\sqrt{\dist[\lambda_1,\sigma(T,\cL_{2n+1})]}\Big).\]
Figure 3 suggests that
\begin{align*}
&\dist[\lambda_1,\sigma(T,\cL_{4n+1})]\approx\mathcal{O}(n^{-3/2})\quad\textrm{and}\\
&\dist[\lambda_1+i,\sigma(T+iQ_{2n+1},\cL_{4n+1})]\approx\mathcal{O}(n^{-1}).
\end{align*}
Here we have compared convergence from an approximation of $\sigma(T+iQ_{2n+1})$ and $\sigma(T)$ using subspaces of the same dimension $4n+1$.
Although $\dist[\lambda_1+i,\sigma(T+iQ_{2n+1},\cL_{4n+1})]$ lags behind $\dist[\lambda_1,\sigma(T,\cL_{4n+1})]$ we note that the presence of spectral pollution in the gap $(-\pi,\pi)$ renders the former redundant as a tool for approximating $\lambda_1$.
\end{example}

\begin{example}
With $\mathcal{H}=L^2(0,1)\oplus L^2(0,1)$ we consider the block operator matrix
\[
T = \left(
\begin{array}{cc}
-d^2/dx^2 & -d/dx\\
d/dx & 2I
\end{array} \right)
\]
with homogeneous Dirichlet boundary conditions in the first component. The same matrix (but with different boundary conditions) has been studied in relation to spectral pollution in
\cite{lesh}. We have $\sigma_{\ess}(T)=\{1\}$ (see for example \cite[Example 2.4.11]{Tretter}) while
$\sigma_{\dis}(T)$ consists of the simple eigenvalue $\{2\}$ with eigenvector $(0,1)^T$, and the two sequences of simple eigenvalues
\[
\lambda_k^\pm := \frac{2+k^2\pi^2 \pm\sqrt{(k^2\pi^2 + 2)^2  - 4k^2\pi^2}}{2}.
\]
The sequence $\lambda_k^-$ lies below, and accumulates at, the essential spectrum. While the sequence $\lambda_k^+$ lies above the eigenvalue $2$ and accumulates at $\infty$.

Denote by $L_h^0$ the FEM space of piecewise linear functions on a uniform mesh of size $h$ and satisfying homogeneous Dirichlet boundary conditions, and by $L_h$ the space without boundary conditions. We consider the subspaces $\cL_{h}=L_h^0\oplus L_h$ which belong to the domain of the quadratic form associated to $T$.

\begin{figure}[h!]
\centering
\includegraphics[scale=.45]{july11pic3.eps}
\caption{}
\end{figure}

Figure 4 shows part of $\sigma(T,\cL_{1/39})$. The interval $(1,2)$ contains many elements from $\sigma(T,\cL_{1/39})$ despite this interval being contained
in $\rho(T)$. This is a situation which appears to deteriorate further as $h\to 0$. In particular, the eigenvalue $2$
is hidden within this cloud of elements from $\sigma(T,\cL_{h})$. We note that $2\in\sigma(T,\cL_{h})$ for every $h$ because the corresponding eigenvector
always belongs to $\cL_h$.



\begin{figure}[h!]
\centering
\includegraphics[scale=.45]{july11pic4.eps}
\caption{}
\end{figure}

\begin{figure}[h!]
\centering
\includegraphics[scale=.45]{july11pic5.eps}
\caption{}
\end{figure}

We set $\Delta=[\lambda_1^-,\lambda_2^-)\cup(1,\lambda_2^+)$, and for a given $h$ we set $Q_{h} = E_{h}(\Delta)P_h$. By Lemma \ref{cor} we have $2+i\in\sigma(T+iQ_{h})$ with eigenvector $(0,1)^T$, and $U(1,\lambda_1^+)\subset\rho(T+iQ_{h})$, then by Theorem \ref{spectralinc} we have
\[U(1,\lambda_1^+)\backslash\mathbb{R}\subset\lim_{s\to 0}\sigma(T+iQ_{h},\cL_s).\]
It follows from Theorem \ref{multthm} that for any fixed $h_0$ and all sufficiently small $h<h_0$ we have simple eigenvector $2+i$ isolated in $\sigma(T+iQ_{h_0},\cL_{h})$ with eigenvector $(0,1)^T$. This is in contrast to $\sigma(T,\cL_{h})$ where not only is the eigenvalue $2\in\sigma(T,\cL_{h})$ obscured many elements from $\sigma(T,\cL_{s})$ (see figure 5), but has multiplicity two as an element from $\sigma(T,\cL_{h})$.

For sufficiently small $h>0$ the spectrum of $T+iQ_{h}$ will also include simple eigenvalues close to $\lambda_1^-+i$ and $\lambda_1^++i$. Figures 6 and 7 display $\Spec(T+iQ_{h},\cL_{h/4})$ for a range of $h$ values. In addition to $2\in\sigma(T+iQ_{h},\cL_{h/4})$ Figure 5 shows
that for each choice of $h$ there is a single element from $\sigma(T+iQ_{h},\cL_{h/4})$ very close to $\lambda_1^-+i$ and many elements with real part near $\sigma_{\ess}(T)$. Figure 6 shows that there is a single element from $\sigma(T+iQ_{h},\cL_{h/4})$ very close to $\lambda_1^++i$.

Figure 9 shows that distance from $\sigma(T,\cL_{h})$ and $\sigma(T+iQ_h,\cL_{h/4})$ to each of the eigenvalues $\lambda_1^\pm$ is of the order $\mathcal{O}(h^2)$. The data shows that approximation from $\sigma(T+iQ_h,\cL_{h/4})$ is actually far better, and suggests that we instead compare $\sigma(T,\cL_{h/4})$ and $\sigma(T+iQ_h,\cL_{h/4})$. The table 1 shows the distance from $\sigma(T,\cL_{h/4})$ and $\sigma(T+iQ_h,\cL_{h/4})$ to each of the eigenvalues $\lambda_1^\pm$. It is quite remarkable that the approximation
provided by $\sigma(T,\cL_{h/4})$ and $\sigma(T+iQ_h,\cL_{h/4})$ is essentially the same. Figure 10 compares the approximation of $\sigma(T,\cL_{h})$ and $\sigma(T+iQ_{1/9},\cL_{h})$, we find that the approximation is essentially the same until $h\approx 1/288$.

\begin{center}
\begin{table}
\begin{tabular}{|c|c|c|}
\hline
h & dist[$\lambda_1^+,\Spec(T,L_{h/4})$] & dist[$\lambda_1^-,\Spec(T,L_{h/4})$]\\
\hline
1/9    & 0.005580243940182 & 0.000684789435471\\
1/19   & 0.001251874635813 & 0.000153576631650\\
1/39   & 0.000297114162734 & 0.000036446591810\\
1/79   & 0.000072409442094 & 0.000008882198351\\
1/159  & 0.000017874960776 & 0.000002192832332\\
\hline
 &dist[$\lambda_1^+,\Spec(T+iQ_h,L_{h/4})$] & dist[$\lambda_1^-,\Spec(T+iQ_h,L_{h/4})$]\\
\hline
1/9    & 0.005580047312934 & 0.000690778056738\\
1/19   & 0.001251863433642 & 0.000153874989347\\
1/39   & 0.000297113520557 & 0.000036463360846\\
1/79   & 0.000072409307306 & 0.000008883214972\\
1/159  & 0.000017875121205 & 0.000002192619394\\
\hline
\end{tabular}
\newline
\newline
\caption{}
\end{table}
\end{center}

\begin{figure}[h!]
\centering
\includegraphics[scale=.45]{july11pic7.eps}
\caption{}
\end{figure}

\begin{figure}[h!]
\centering
\includegraphics[scale=.45]{july11pic8.eps}
\caption{}
\end{figure}


\end{example}

}

\subsection{Unbounded Operators}
We now assume that $T$ is bounded from below and unbounded from above. Let $\gamma<\min\sigma(T)$ and consider the operator $T-\gamma$. We have
$[a-\gamma,b-\gamma]\cap\sigma_{\ess}(T-\gamma)=\varnothing$, and, in particular
\begin{equation}\label{res}
\left[\frac{1}{b-\gamma},\frac{1}{a-\gamma}\right]\cap\sigma_{\ess}((T-\gamma)^{-1})=\varnothing.
\end{equation}
We shall approximate the eigenvalues $\sigma(T)\cap[a,b]$ by applying the results from the preceding sections to approximate eigenvalues of $(T-\gamma)^{-1}$ in $\sigma((T-\gamma)^{-1})\cap[(b-\gamma)^{-1},(a-\gamma)^{-1}]$.

Let $\{\mu_1,\dots,\mu_k\} = [a,b]\cap\sigma(T,\cL_n)$
where the eigenvalues are repeated according to multiplicity.
Let $\{u_1,\dots,u_k\}$ be a corresponding set of orthonormal eigenvectors.
We write $\hat{x} = (T-\gamma)^{\frac{1}{2}}x$ and
$\hat{\cL}_n = (T-\gamma)^{\frac{1}{2}}\cL_n$, and note that $\hat{u}_i\perp\hat{u}_j$ for $i\ne j$ since
\[\langle\hat{u}_i,\hat{u}_j\rangle = (\frak{t}-\gamma)(u_i,u_j) = (\mu_i-\gamma)\langle u_i,u_j\rangle = 0.\]
Furthermore, we have for some $\mu_i\in[a,b]$ and any $y\in\cL_n$
\[
0 = (\frak{t}-\gamma)(u_i,y) - (\mu_i-\gamma)\langle u_i,y\rangle = \langle\hat{u}_i,\hat{y}\rangle - (\mu_i-\gamma)\langle(T-\gamma)^{-1}\hat{u}_i,\hat{y}\rangle,
\]
so that $(\mu_i-\gamma)^{-1}\in\sigma((T-\gamma)^{-1},\hat{\cL}_n)$. Evidently, there is a one-to-one correspondence between $\sigma(T,\cL_n)$
and $\sigma((T-\gamma)^{-1},\hat{\cL}_n)$:
\[
\sigma((T-\gamma)^{-1},\hat{\cL}_n) = \left\{\frac{1}{\lambda-\gamma}:~\lambda\in\sigma(T,\cL_n)\right\}.
\]
In particular, we have
\[
\sigma((T-\gamma)^{-1},\hat{\cL}_n)\cap\left[\frac{1}{b-\gamma},\frac{1}{a-\gamma}\right] = \left\{\frac{1}{\mu_1-\gamma},\dots,\frac{1}{\mu_k-\gamma}\right\}
\]
with corresponding orthogonal eigenvectors given by $\{\hat{u}_1,\dots,\hat{u}_k\}$. Denote by $Q_n$ the orthogonal projection onto $\spn\{\hat{u}_1,\dots,\hat{u}_k\}\subset\hat{\cL}_n$. From the first paragraph in the proof of lemma \ref{subspacegap} it follows that $\delta(\cL(\Delta),\hat{\cL}_n)=\mathcal{O}(\delta_{\frak{t}}(\cL(\Delta),\cL_n))$, then by Lemma \ref{Qn}
we have
 \[Q_n\stackrel{s}{\longrightarrow}E(\Delta)\quad\textrm{and}\quad\Vert(I- Q_n)E\Vert=\mathcal{O}(\delta_{\frak{t}}(\cL(\Delta),\cL_n)).\]
Hence, a direct application of Terrorem \ref{QQ} to the bounded self-adjoint operator $(T-\gamma)^{-1}$ and subspaces $(\hat{\cL}_n)$ yields the following corollary.

\begin{corollary}\label{QQQ}
There exists sequences $(\tau_n)$ and $(r_n)$ of non-negative reals, with
\[0\le \tau_n = \mathcal{O}(\delta_{\frak{t}}(\cL(\Delta),\cL_n))\quad\textrm{and}\quad 0\le r_n = \mathcal{O}(\delta_{\frak{t}}(\cL(\Delta),\cL_n)^2),\]
such that $U_{\tau_n,r_n}(1/(b-\gamma),1/(a-\gamma))\backslash\mathbb{R}\subset\rho((T-\gamma)^{-1}+iQ_n)$ for all sufficiently large $n$.
Moreover, if $\Gamma_j$ is the circle with center $1/(\lambda_j-\gamma)+i$ and radius $r_n$, then $\Gamma_j\subset\rho((T-\gamma)^{-1}+iQ_n)$ and the dimension of the spectral subspace associated to $(T-\gamma)^{-1}+iQ_n$ and the region enclosed by $\Gamma_j$ equals the dimension of $\cL(\{\lambda_j\})$.
\end{corollary}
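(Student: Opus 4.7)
The plan is to reduce directly to Theorem \ref{QQ} applied to the bounded self-adjoint operator $\tilde{T}:=(T-\gamma)^{-1}$ with the sequence of trial subspaces $\hat{\cL}_n=(T-\gamma)^{\frac{1}{2}}\cL_n$. The preamble of the corollary has already exhibited the one-to-one spectral correspondence $\lambda\mapsto 1/(\lambda-\gamma)$ between $\sigma(T)$ and $\sigma(\tilde{T})$, together with the analogous correspondence at the Galerkin level between $\sigma(T,\cL_n)$ and $\sigma(\tilde{T},\hat{\cL}_n)$. Under this map the interval $[a,b]$ becomes $[1/(b-\gamma),1/(a-\gamma)]$, the eigenvalues $\{\lambda_1,\dots,\lambda_d\}$ become $\{1/(\lambda_j-\gamma)\}_{j=1}^d$, and the spectral subspace $\cL(\Delta)$ is unchanged as a set of vectors in $\mathcal{H}$.

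First I would verify the structural hypotheses needed by Theorem \ref{QQ}: that $\tilde{T}$ is a bounded self-adjoint operator on $\mathcal{H}$ (immediate from $\gamma<\min\sigma(T)$); that the sequence $(\hat{\cL}_n)$ is dense in $\mathcal{H}$, which follows from the density of $(\cL_n)$ in $\mathcal{H}_{\frak{t}}$ as already observed in the proof of Lemma \ref{subspacegap}; and that the endpoints $1/(b-\gamma),1/(a-\gamma)$ lie in $\rho(\tilde{T})$, which follows from \eqref{res}. Second, I would identify the relevant projection: the preamble of the corollary constructs $Q_n$ as the orthogonal projection in $\mathcal{H}$ onto $\spn\{\hat{u}_1,\dots,\hat{u}_k\}\subset\hat{\cL}_n$, where $\hat{u}_i=(T-\gamma)^{\frac{1}{2}}u_i$ and the $u_i$ span the Galerkin eigenspace of $T$ in $[a,b]$. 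So $Q_n$ is exactly the spectral projection of the compression $P_n\tilde{T}|_{\hat{\cL}_n}$ (in the appropriate compressed sense) associated to the interval $[1/(b-\gamma),1/(a-\gamma)]$, which is what Theorem \ref{QQ} requires.

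Third, I would confirm the required gap estimate $\Vert(I-Q_n)E(\Delta)\Vert=\mathcal{O}(\delta(\cL(\Delta),\hat{\cL}_n))$ via Lemma \ref{Qn}, combined with the fact, established in the opening paragraph of the proof of Lemma \ref{subspacegap}, that $\delta(\cL(\Delta),\hat{\cL}_n)=\mathcal{O}(\delta_{\frak{t}}(\cL(\Delta),\cL_n))$. Composing these two estimates yields $\Vert(I-Q_n)E(\Delta)\Vert=\mathcal{O}(\delta_{\frak{t}}(\cL(\Delta),\cL_n))$, which is the quantity we need to control the sequences $\tau_n$ and $r_n$ at the correct rate.

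The final step is then a direct invocation of Theorem \ref{QQ} with $T$ replaced by $\tilde{T}$, $\cL_n$ replaced by $\hat{\cL}_n$, $[a,b]$ replaced by $[1/(b-\gamma),1/(a-\gamma)]$, and the eigenvalues $\lambda_j$ replaced by $1/(\lambda_j-\gamma)$; the conclusion transfers to give the stated resolvent region and the stated identification of the dimension of the spectral subspace of $\tilde{T}+iQ_n$ enclosed by each $\Gamma_j$ with $\dim\cL(\{\lambda_j\})$. The main technical care needed, and the only place where one has to be careful, is the bookkeeping between the form norm and the Hilbert norm when translating the gap estimates, and the check that the $\hat{u}_i$ really are the eigenvectors of the compression $P_n\tilde{T}P_n$ (as opposed to some other associated operator) so that $Q_n$ coincides with the spectral projection to which Theorem \ref{QQ} is sensitive; once that identification is in place, the corollary follows without further work.
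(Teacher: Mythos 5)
Your proposal is correct and follows essentially the same route as the paper: the paper likewise treats the corollary as a direct application of Theorem \ref{QQ} to the bounded self-adjoint operator $(T-\gamma)^{-1}$ with trial spaces $\hat{\cL}_n=(T-\gamma)^{\frac{1}{2}}\cL_n$, using the spectral correspondence $\mu\mapsto(\mu-\gamma)^{-1}$, the identification of $Q_n$ as the projection onto $\spn\{\hat{u}_1,\dots,\hat{u}_k\}$, and the estimate $\delta(\cL(\Delta),\hat{\cL}_n)=\mathcal{O}(\delta_{\frak{t}}(\cL(\Delta),\cL_n))$ from the first paragraph of the proof of Lemma \ref{subspacegap} combined with Lemma \ref{Qn}. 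The verification details you flag (density of $\hat{\cL}_n$ in $\mathcal{H}$, that the $\hat{u}_i$ are eigenvectors of the compression of $(T-\gamma)^{-1}$ to $\hat{\cL}_n$) are exactly what the paper's preparatory computation establishes, so no gap remains.
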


For a fixed $m$, let $u_1,\dots,u_k$ be as above, and consider the eigenvalue problem: find $z\in\mathbb{C}$ for which there exists an $x\in\cL_n\backslash\{0\}$ with
\begin{equation}\label{in1}
(\frak{t}-\gamma)(x,y)- iz\sum_{j=1}^k\frac{(\frak{t}-\gamma)(x,u_j)(\frak{t}-\gamma)(u_j,y)}{(\frak{t}-\gamma)[u_j]} - z\langle x,y\rangle = 0\quad\forall~y\in\cL_n.
\end{equation}
Evidently, this is equivalent to the eigenvalue problem: find $z\in\mathbb{C}$ for which there exists a $\hat{x}\in\hat{\cL}_n\backslash\{0\}$ and
\begin{equation}\label{in2}
\langle\hat{x},\hat{y}\rangle - iz\sum_{j=1}^k\frac{\langle\hat{x},\hat{u}_j\rangle\langle\hat{u}_j,\hat{y}\rangle}{\Vert\hat{u}_j\Vert^2} - z\langle(T-\gamma)^{-1}\hat{x},\hat{y}\rangle = 0\quad\forall~\hat{y}\in\hat{\cL}_n.
\end{equation}
The solutions to \eqref{in2} are precisely the
set $\{w^{-1}:~w\in\sigma((T-\gamma)^{-1}+ iQ_m,\hat{\cL}_n)\}$. Therefore, we may approximate the eigenvalues $\sigma((T-\gamma)^{-1})\cap[(b-\gamma)^{-1},(a-\gamma)^{-1}]$ by solving \eqref{in1}.

\begin{example}
With $\mathcal{H}=L^2(0,1)\oplus L^2(0,1)$ we consider the block operator matrix
\[
T = \left(
\begin{array}{cc}
-d^2/dx^2 & -d/dx\\
d/dx & 2I
\end{array} \right)
\]
with homogeneous Dirichlet boundary conditions in the first component. The same matrix (but with different boundary conditions) has been studied in \cite{lesh}. We have $\sigma_{\ess}(T)=\{1\}$ (see for example \cite[Example 2.4.11]{Tretter}) while
$\sigma_{\dis}(T)$ consists of the simple eigenvalue $\{2\}$ with eigenvector $(0,1)^T$, and the two sequences of simple eigenvalues
\begin{displaymath}
\lambda_k^\pm := \frac{2+k^2\pi^2 \pm\sqrt{(k^2\pi^2 + 2)^2  - 4k^2\pi^2}}{2}.
\end{displaymath}
The sequence $\lambda_k^-$ lies below, and accumulates at, the essential spectrum. While the sequence $\lambda_k^+$ lies above the eigenvalue $2$ and accumulates at $\infty$. Therefore, we have
\[\sigma_{\ess}(T^{-1})=\{0,1\}\quad \textrm{and}\quad\sigma_{\dis}(T^{-1})=\bigcup_{k=1}^{\infty}\left\{\frac{1}{\lambda_k^+}\right\}\bigcup_{k=1}^{\infty}\left\{\frac{1}{\lambda_k^-}\right\}\bigcup\left\{\frac{1}{2}\right\}.\]
\begin{figure}[h!]
\centering
\includegraphics[scale=.45]{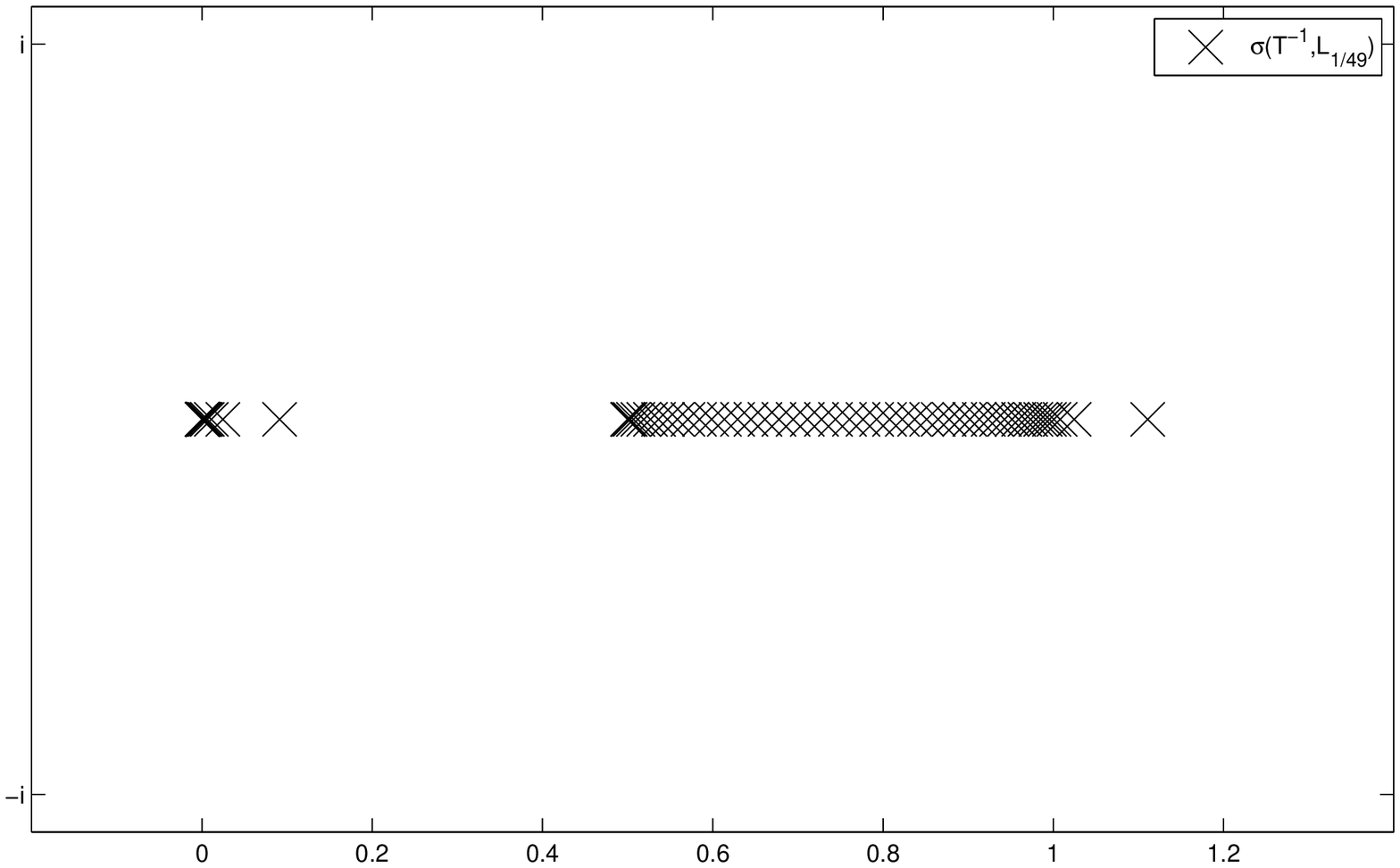}
\caption{$\sigma(T^{-1},L_{1/49})$ displaying spectral pollution in the interval $(1/2,1)$.}
\end{figure}
\begin{figure}[h!]
\centering
\includegraphics[scale=.45]{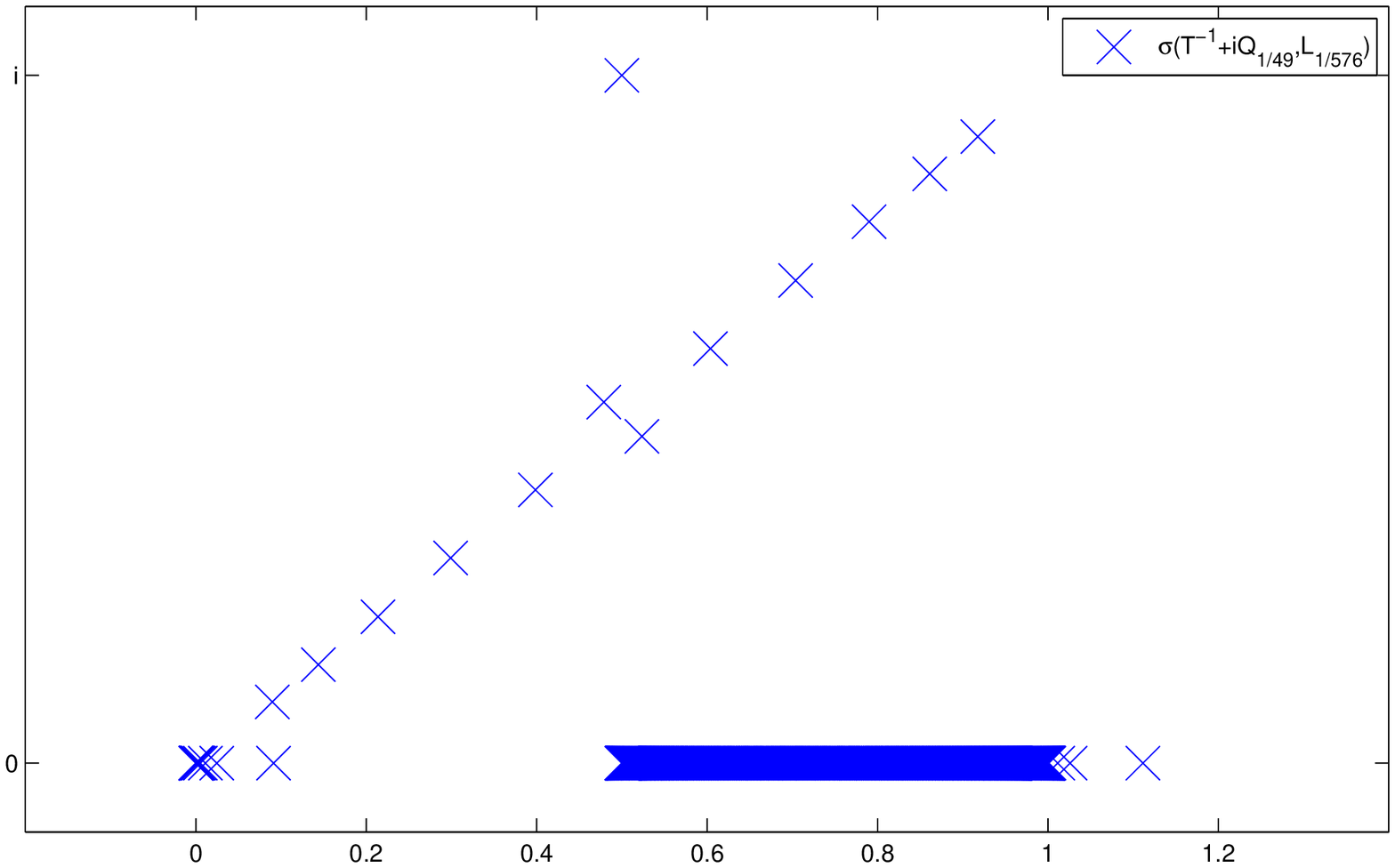}
\caption{$\sigma(T^{-1}+iQ_{1/49},L_{1/576})$ displaying the approximation of the eigenvalue $1/2+i$.}
\end{figure}
Denote by $\cL_h^0$ the FEM space of piecewise linear functions on a uniform mesh of size $h$ and satisfying homogeneous Dirichlet boundary conditions, and by $\cL_h$ the space without boundary conditions. The subspaces $\cL_h^0\oplus \cL_h$ belong to $\Dom(\frak{t})$. We define $L_{h}=T^{\frac{1}{2}}(\cL_h^0\oplus \cL_h)$.

Figure 3 shows $\sigma(T^{-1},L_{1/49})$. The interval $(1/2,1)$ is filled with Galerkin eigenvalues, however, the interval $(1/2,1)$ belongs to the resolvent set of $T^{-1}$. This is an example of spectral pollution, the interval lies in the gap in the essential spectrum which is where the Galerkin method is known to be unreliable. We note that the eigenvalue $1/2$ is obscured by the spectral pollution.

Figure 4 shows $\sigma(T^{-1}+iQ_{1/49},L_{1/576})$ where $Q_{1/49}$ is the orthogonal projection associated to $\sigma(T^{-1},L_{1/49})$ and the interval $[1/4,9/10]$. Since $\sigma(T^{-1})\cap[1/4,9/10]$ consists only of the simple eigenvalue $1/2$, the set $\sigma(T^{-1}+iQ_{1/49},L_{1/576})$ has only one element with imaginary part near $1$, in fact, $1/2+i\in\sigma(T^{-1}+iQ_{1/49},L_{1/576})$ because the eigenvector associated to this eigenvalue is $\psi=(0,1)^T\in\cL_h^0\oplus \cL_h$, hence the eigenvalue also belongs to $L_h$. Therefore, our method has identified this eigenvalue, and furthermore, Figure 4 suggests that all elements $\sigma(T^{-1},L_{1/49})\cap(1/2,1)$ are points of spectral pollution.

We now turn to the approximation of the eigenvalue $1/\lambda_1^+$ which lies in the gap in the essential spectrum $(0,1)$.
Figure 5 shows $\sigma(T^{-1}+iQ_{1/49},L_{1/576})$ where $Q_{1/49}$ is now the orthogonal projection associated to $\sigma(T^{-1},L_{1/49})$ and the interval $[1/20,1/5]$.

Since $\sigma(T^{-1})\cap[1/20,1/5]$ consists only of the simple eigenvalue $\{1/\lambda_1^+\}$, the set $\sigma(T^{-1}+iQ_{1/49},L_{1/576})$ has only one element with imaginary part near $1$, this is an approximation of $1/\lambda_1^++i$. The Galerkin method does not appear to suffer from spectral pollution in the interval $(0,1/2)$. Table 1 shows the approximation of $1/\lambda_1^+$ using $\sigma(T^{-1},L_{h/2})$ and $\sigma(T^{-1}+iQ_{h},L_{h/2})$. Both converge to
$1/\lambda_1^+$ with order $\mathcal{O}(h^2)$.

\begin{figure}[h!]
\centering
\includegraphics[scale=.45]{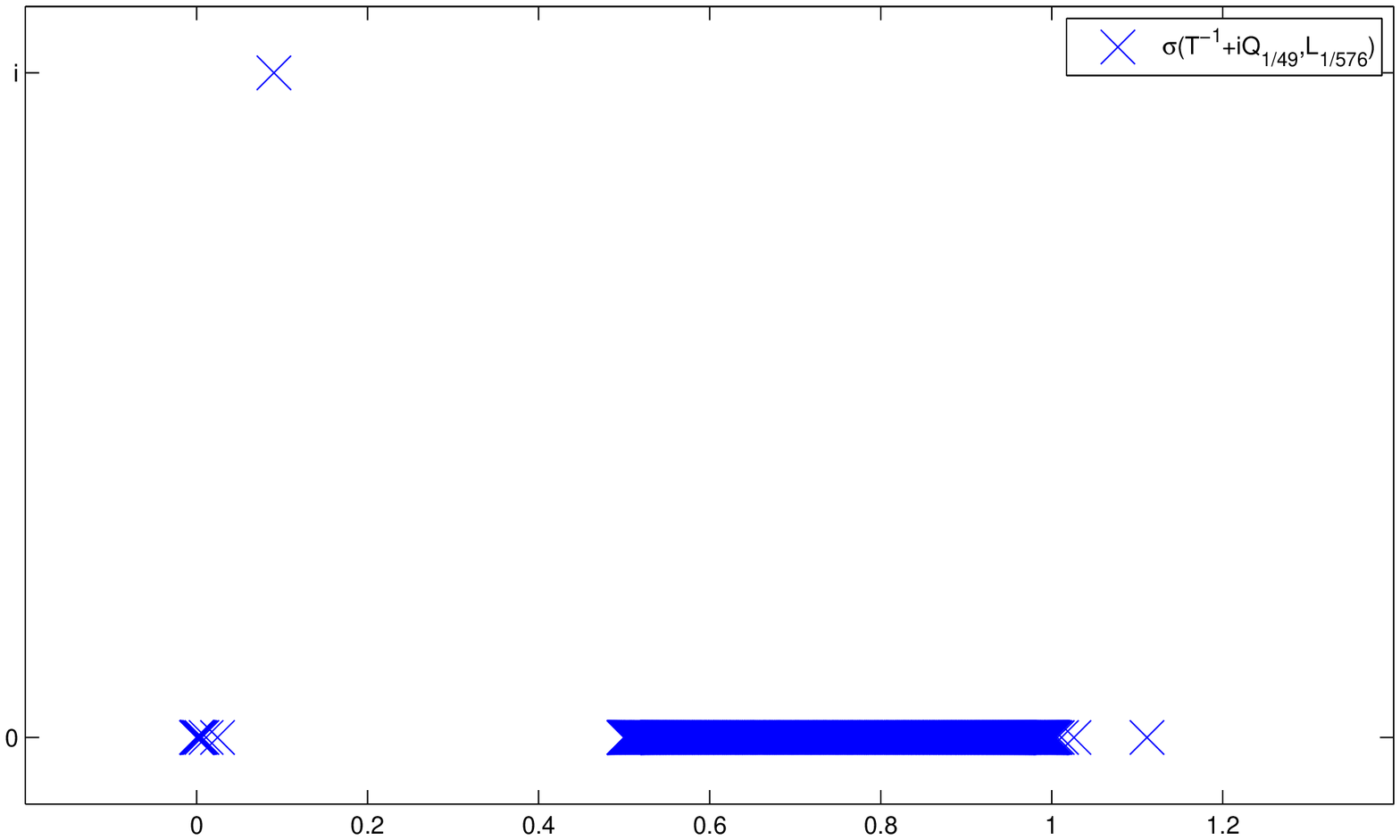}
\caption{$\sigma(T^{-1}+iQ_{1/49},L_{1/576})$ displaying the approximation of the eigenvalue $1/\lambda_1^++i$.}
\end{figure}

\begin{center}
\begin{table}
\begin{tabular}{|c|c|c|}
\hline
h & dist($(\lambda_1^+)^{-1},\sigma(T^{-1},L_{h/2})$) & dist($(\lambda_1^+)^{-1}+i,\sigma(T^{-1}+iQ_{h},L_{h/2})$)\\
\hline
1/9    & 1.852226448408184e-004 & 7.356900130780202e-004\\
1/19   & 4.159849994125886e-005 & 1.656338892411880e-004\\
1/39   & 9.875177553464454e-006 & 3.934129644715903e-005\\
1/79   & 2.406805040600091e-006 & 9.589568304944231e-006\\
1/159  & 5.941634519252004e-007 & 2.367430965052093e-006\\
1/319  & 1.476118197535348e-007 & 5.882392223781511e-007\\
\hline
\end{tabular}
\newline
\newline
\caption{A comparison of the approximation of $(\lambda_1^+)^{-1}$ using $\sigma(T^{-1},L_{h/2})$ and $\sigma(T^{-1}+iQ_{h},L_{h/2})$}
\end{table}
\end{center}

\end{example}

\section{Acknowledgements}
The author is grateful to Marco Marletta for useful discussions and acknowledges the support of the Wales Institute of Mathematical and Computational Sciences and the Leverhulme Trust grant: RPG-167.

\end{document}